\newtheorem{definition}{Definition}[section]
\newtheorem{theorem}[definition]{Theorem}
\newtheorem{proposition}[definition]{Proposition}
\newtheorem{example}[definition]{Example}
\newtheorem{remark}[definition]{Remark}
\newtheorem{corollary}[definition]{Corollary}
\newproof{proof}{\textbf{Proof}}
\journal{}
\begin{document}

\begin{frontmatter}



\title{\textbf{On derivations of MV-algebras}}


\author{Jun Tao Wang$^{a,*}$, Bijan Davvaz$^b$, Peng Fei He$^c$}
\cortext[cor1]{Corresponding author. \\
Email addresses: wjt@stumail.nwu.edu.cn(J.T. Wang), davvaz@yaz.ac.ir (B. Davvaz),\\ hepengf1986@126.com (P.F. He).}

\address[A]{School of Mathematics, Northwest University, Xi'an, 710127, China}
\address[B]{Department of Pure Mathematics, Yazd University, Yazd 89195-741, Iran}

\begin{abstract}
In this paper, we investigate related properties of some particular derivations  and give some characterizations of additive derivations in MV-algebras. Then, we obtain that the fixed point set of additive derivations is still an MV-algebra. Also, we study Boolean additive derivations and their adjoint derivations. In particular, we get that the fixed point set of Boolean addition derivations and that of their adjoint derivations are isomorphism. Moreover, we prove that every MV-algebra
is isomorphic to the direct product of  the fixed point set of Boolean additive derivations and that of their adjoint derivations. Finally, we show that the structure of a Boolean algebra is completely determined by its set of all Boolean additive (implicative) derivations.
\end{abstract}

\begin{keyword} MV-algebra; additive derivation; fixed point set; Boolean algebra



\end{keyword}

\end{frontmatter}

\section{Introduction}
\label{intro} It is well known that certain information processing approaches, especially inferences based on certain information, are based on the classical logic (classical two-valued logic). Naturally, it is necessary to establish some rational logic systems as a logical foundation for uncertain information processing. For this reason, various types of non-classical logic systems have been proposed and researched. In recent years, non-classical logic has become a formal and useful tool in computer science for dealing with uncertain and fuzzy information. Various logical algebras have been proposed as semantic systems for non-classical logic systems. Among these logical algebras, MV-algebras are the first class of logic algebras introduced and investigated. In 1958, C.C.Chang introduced the notion of MV-algebras for the purpose of providing algebraic proof of the completeness theorem of infinite-valued propositional logics \cite{Chang1}. We are speaking here of the infinite-valued logic proposed in 1930 by {\L}ukasiewicz and Tarski \cite{Tarski} with truth values in the internal [0,1] of real numbers. Thus, in a certain sense, MV-algebras stand in relations to many-valued logic as Boolean algebras do to classical two-valued logic. Furthermore, Chang \cite{Chang2} established a bijective correspondence between the linearly ordered MV-algebras and the linearly ordered abelian $\ell$-groups with strong unit, then used this result in order to obtain an algebraic proof for the completeness theorem of {\L}ukasiewicz propositional logic in another way. In \cite{Mundici1}, Mundici extended Chang's result by proving the categorial equivalence between MV-algebras and abelian $\ell$-groups with strong unit.

The notion of derivations, introduced from the analytic theory, is helpful for studying algebraic structures and properties in algebraic systems. In fact, the notion of derivation in ring theory is quite old and plays a significant role in algebraic geometry. In 1957, Posner\cite{Posner} introduced the notion of a derivations in a prime ring. After that a number of research articles have been appeared on derivations in the theory of rings and references there in \cite{Bell1,Bell2,Davvaz1,Albas}. Inspired by derivations on rings, Jun et al \cite{Jun} applied the notion  of derivations to BCI-algebras and gave some characterizations of p-semisimple BCI-algebras. Based on this, several authors have studied derivations in BCI-algebras \cite{Zhan,Davvaz2,Borzooei}. In 2010, Alshehri\cite{Alshehri} applied the notions of (additive) derivations to MV-algebras and discussed some related properties, they also proved that an additive derivation of a linearly ordered MV-algebra is isotone. After the work of Alshehri, many research articles have appeared on the derivations of MV-algebras in different aspects \cite{Yazarli,Ghorbain,Ardekani}, for example, Yazarli et al \cite{Yazarli} further investigated several kinds of generalized derivations on MV-algebras and obtain some interesting results.  Ardekani and Davvaz \cite{Ardekani} introduced the notion of $f$-derivations and $(f,g)$-derivations of MV -algebras and investigated some related properties of them. Ghorbain et al \cite{Ghorbain} introduced several kinds of these derivations and discuss some related results. They also discuss the relationship between these new derivations on MV-algebras. Recently, the notion of derivations has been extended to various logical algebras such as BL-algebras \cite{Xin}, residuated lattices \cite{he} and their non-commutative cases.

This paper is a continuation of the research from \cite{Alshehri}. The paper is organized as follows: In Section 2, we review some basic definitions and results about MV-algebras. In Section 3, we characterize some particular derivations in MV-algebras. In Section 4, we study Boolean derivations and their adjoint derivations. In particular, we show that every MV-algebra is isomorphic to the direct product of  the fixed point set of Boolean additive derivations and that of their adjoint derivations.

\section{Preliminaries}
In this section, we summarize some definitions and results about MV-algebras, which will be used in the following sections.
\begin{definition}\emph{\cite{Chang1} An algebra $(L,\oplus,\ast,0)$ of type $(2,1,0)$ is called an \emph{MV-algebra} if it satisfies the following conditions:}
\begin{enumerate}[(MV1)]
  \item \emph{$(L,\oplus,0)$ is a commutative monoid,
  \item $(x^\ast)^\ast=x$,
  \item $0^\ast\oplus x=0^\ast$,
  \item $(x^\ast\oplus y)^\ast\oplus y=(y^\ast\oplus x)^\ast\oplus x$, for any $x,y\in L$.}
\end{enumerate}
\end{definition}

In what follows, by $L$ we denote the universe of an MV-algebra $(L,\oplus,\ast,0)$.  On each MV-algebra $L$, we define the constant $1$ and the operations $\odot$, $\rightarrow$ as follows: $1=0^\ast$, $x\odot y=(x^\ast\oplus y^\ast)^\ast$ and $x\rightarrow y=x^\ast\oplus y$. We define $x\leq y$ if and only if $x^\ast\oplus y=1$. It follows that $\leq$ is a partial order, called the \emph{natural order} of $L$. On each MV-algebra, the natural order determines a lattices structure, in which, $x\vee y=(x\odot y^\ast)\oplus y$, $x\wedge y=x\odot(x^\ast\oplus y)$. In fact, one can prove that $(L,\wedge,\vee,0,1)$ is a distributive lattice. Since MV-algebras form a variety, the notions of homomorphism, subalgebra are just the particular cases of the corresponding universal algebraic notions \cite{Chang1,Chang2,Tarski}.

\begin{example}\emph{\cite{Mundici2}  Let $L=[0,1]$ be the real unit interval. If we define $x\oplus y=\min\{1,x+y\}$ and $x^\ast=1-x$ for any $x,y\in L$, then $(L,\oplus,\ast, 0)$ is an MV-algebra. Also, for each number $n\geq 2$, then $n$-element set $S_n=\{0,\frac{1}{n-1},\frac{2}{n-2},\cdots,\frac{n-1}{n-2},1\}$ is a subalgebra of $L$.}
\end{example}

\begin{proposition}\emph{\cite{Chang1,Chang2,Tarski} In an MV-algebra, the following properties hold:}
\begin{enumerate}[(1)]
  \item \emph{$x\oplus x^\ast=1$,
  \item $x\odot x^\ast=0$,
  \item $x\leq y$ if and only if $x\rightarrow y=1$,
  \item $x\odot y\leq x\wedge y$,
  \item $x\rightarrow (y\wedge z)=(x\rightarrow y)\wedge(x\rightarrow z)$,
  \item $(x\vee y)\rightarrow z=(x\rightarrow z)\wedge(y\rightarrow z)$,
  \item $x\leq y$ implies $x\odot z\leq y\odot z$,
  \item $x\vee y=(x\rightarrow y)\rightarrow y=(y\rightarrow x)\rightarrow x$,
  \item $x\leq y\rightarrow x$,
  \item $x\odot(y\vee z)=(x\odot y)\vee(x\odot z)$,
  \item $x\oplus(y\wedge z)=(x\oplus y)\wedge(x\oplus z)$, for all $x,y,z\in L$.}
\end{enumerate}
\end{proposition}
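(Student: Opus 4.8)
The plan is to derive all eleven identities from the axioms (MV1)--(MV4) and the definitions of $1$, $\odot$, $\rightarrow$, $\le$, $\vee$ and $\wedge$, arranging the items so that no one of them is invoked before it has been proved. First I would record the elementary facts $x\oplus 1=1$ (this is (MV3) read with commutativity), together with $0^\ast=1$ and $1^\ast=0$ (from (MV2)). Item (1) then comes from (MV4) specialized at $y=1$: the left side collapses to $(x^\ast\oplus 1)^\ast\oplus 1=1^\ast\oplus 1=1$, while the right side is $(1^\ast\oplus x)^\ast\oplus x=x^\ast\oplus x$. Item (2) is immediate, $x\odot x^\ast=(x^\ast\oplus x)^\ast=1^\ast=0$; item (3) is merely the definitions of $\rightarrow$ and $\le$ placed side by side; and item (9) follows from (1), since $x^\ast\oplus(y\rightarrow x)=y^\ast\oplus(x\oplus x^\ast)=y^\ast\oplus 1=1$.

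Next I would treat item (8). Unfolding, $(x\rightarrow y)\rightarrow y=(x^\ast\oplus y)^\ast\oplus y$, and (MV4) says precisely that this expression is unchanged when $x$ and $y$ are interchanged, which yields $(x\rightarrow y)\rightarrow y=(y\rightarrow x)\rightarrow x$; since $x\odot y^\ast=(x^\ast\oplus y)^\ast$, the common value also equals $(x\odot y^\ast)\oplus y$, the formula recalled in the text for $x\vee y$. I take as given (it is part of the structure recalled before the proposition) that the stated formulas for $\vee$ and $\wedge$ genuinely realize the lattice operations of the natural order, and that $\le$ is a partial order; I also record the De Morgan laws $(x\oplus y)^\ast=x^\ast\odot y^\ast$, $(x\wedge y)^\ast=x^\ast\vee y^\ast$ and the fact that $\ast$ reverses $\le$, all immediate from (MV2) and the definitions.

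The engine for the remaining items is the residuation law $x\odot y\le z \Leftrightarrow x\le y\rightarrow z$, which I would prove by the chain $x\odot y\le z \Leftrightarrow (x\odot y)^\ast\oplus z=1 \Leftrightarrow x^\ast\oplus y^\ast\oplus z=1 \Leftrightarrow x^\ast\oplus(y\rightarrow z)=1 \Leftrightarrow x\le y\rightarrow z$, using $(x\odot y)^\ast=x^\ast\oplus y^\ast$. From residuation one reads off that $\odot$ is isotone in each argument and $\rightarrow$ is antitone in its first and isotone in its second argument. In particular item (7) follows: from $y\le z\rightarrow(y\odot z)$ and $x\le y$ we get $x\le z\rightarrow(y\odot z)$, hence $x\odot z\le y\odot z$. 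Item (4) follows because $y\le x^\ast\oplus y$, so $x\odot y\le x\odot(x^\ast\oplus y)=x\wedge y$; here I also use the identity $x\odot(x\rightarrow y)=x\wedge y$, which is just the definition of $\wedge$.

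Finally, items (5), (6), (10), (11) all reduce to residuation, isotonicity, and $x\odot(x\rightarrow y)=x\wedge y$. For (10) the inequality $x\odot(y\vee z)\ge(x\odot y)\vee(x\odot z)$ is isotonicity, while for the reverse inequality, writing $w=(x\odot y)\vee(x\odot z)$, residuation reduces it to $y\vee z\le x\rightarrow w$, i.e.\ to $x\odot y\le w$ and $x\odot z\le w$, which hold; item (11) is then the $\ast$-dual of (10) via the De Morgan laws. For (5), isotonicity of $x\rightarrow(-)$ gives one inequality, and residuation turns the other, $(x\rightarrow y)\wedge(x\rightarrow z)\le x\rightarrow(y\wedge z)$, into $x\odot\big((x\rightarrow y)\wedge(x\rightarrow z)\big)\le y\wedge z$, whose left side is $\le x\odot(x\rightarrow y)=x\wedge y\le y$ and, symmetrically, $\le z$; item (6) is analogous, this time using (10) to rewrite $(x\vee y)\odot w$ as $(x\odot w)\vee(y\odot w)$. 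Conceptually there is nothing hard once the residuation lemma is available; the main thing requiring care is the bookkeeping of the order of the eleven items, since without residuation several of them (notably (5), (6), (10), (11)) would instead demand lengthy direct manipulation of $\oplus$-terms.
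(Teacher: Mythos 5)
This proposition is stated in the paper as a cited preliminary (attributed to Chang and {\L}ukasiewicz--Tarski) and is given no proof there, so there is nothing in the paper to compare your argument against line by line; what you have written is essentially the standard development one finds in the literature on MV-algebras. Your derivation is correct: items (1)--(3), (8), (9) follow directly from (MV1)--(MV4) as you compute, and the residuation law $x\odot y\leq z\Leftrightarrow x\leq y\rightarrow z$, proved purely from the definitions and De Morgan, is exactly the right engine for (4)--(7), (10), (11); the reductions you give for (5), (6), (10) via $x\odot(x\rightarrow y)=x\wedge y$ and for (11) as the $\ast$-dual of (10) all check out. The one point to flag is that you explicitly assume, as the paper's preliminaries also do, that $\leq$ is a partial order and that the displayed formulas for $\vee$ and $\wedge$ really are the join and meet of that order; these facts are themselves nontrivial consequences of (MV1)--(MV4) (they are where (MV4) does most of its work), so a fully self-contained proof would have to establish them first. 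Also, your remark that the De Morgan law $(x\wedge y)^\ast=x^\ast\vee y^\ast$ is ``immediate from (MV2) and the definitions'' is slightly understated: unfolding the two sides gives $x^\ast\oplus(x\odot y^\ast)$ versus $(x^\ast\odot y)\oplus y^\ast$, and identifying them uses the commutativity of $\vee$, i.e.\ (MV4), once more. Neither point is a gap in substance, but both deserve a sentence if the proof is meant to be from the axioms alone.
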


MV-algebras are non-idempotent generalizations of Boolean algebras. Indeed, Boolean algebras are just the MV-algebras obeying the additional identity $x\oplus x=x$ or $x\odot x=x$. Let $L$ be an MV-algebra and $B(L)=\{a\in L|a\oplus a=a\}=\{a\in L|a\odot a=a\}$ be the set of all idempotent elements of $L$. Then $(B(L),\oplus,\ast,0)$ is a subalgebra of $L$, which is called the Boolean center of $L$ \cite{Chang1}.

\begin{proposition}\emph{\cite{Chang1} Let $L$ be an MV-algebra. Then the following statements are equivalent:}
\begin{enumerate}[(1)]
  \item \emph{$x\in B(L)$,
  \item $x\oplus y=x\vee y$,
  \item $x\odot y=x\wedge y$, for any $y\in L$.}
\end{enumerate}
\end{proposition}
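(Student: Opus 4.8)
The plan is to establish the cycle of implications by first disposing of the two easy directions $(2)\Rightarrow(1)$ and $(3)\Rightarrow(1)$, then proving the substantive implication $(1)\Rightarrow(2)$, and finally deducing $(1)\Rightarrow(3)$ from it by dualizing along the involution $\ast$. For $(2)\Rightarrow(1)$ I would just specialize $y=x$ in $x\oplus y=x\vee y$ to get $x\oplus x=x\vee x=x$, whence $x\in B(L)$; likewise $(3)\Rightarrow(1)$ follows by putting $y=x$ in $x\odot y=x\wedge y$ to obtain $x\odot x=x\wedge x=x$.

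For $(1)\Rightarrow(2)$, assume $x\in B(L)$, so that both $x\oplus x=x$ and $x\odot x=x$. The inequality $x\vee y\le x\oplus y$ holds for all $x,y$, since $x\vee y=(x\odot y^{\ast})\oplus y$ and $x\odot y^{\ast}\le x$ by Proposition 2.3(4); only the reverse inequality requires $x\in B(L)$. The key move there is to insert the unit: using $1=x\oplus x^{\ast}$ (Proposition 2.3(1)) and the distributivity of $\oplus$ over $\wedge$ (Proposition 2.3(11)), one gets $x\oplus y=(x\oplus y)\wedge 1=(x\oplus y)\wedge(x\oplus x^{\ast})=x\oplus(x^{\ast}\wedge y)$. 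Now set $c=x^{\ast}\wedge y$; since $c\le x^{\ast}$, Proposition 2.3(7) and 2.3(2) give $x\odot c\le x\odot x^{\ast}=0$, hence $x\le c^{\ast}$, and feeding this back into Proposition 2.3(7) while using $x\odot x=x$ and Proposition 2.3(4) yields $x=x\odot x\le x\odot c^{\ast}\le x$, so $x\odot c^{\ast}=x$. Therefore $x\oplus y=x\oplus c=(x\odot c^{\ast})\oplus c=x\vee c=x\vee(x^{\ast}\wedge y)$. Finally $x\vee x^{\ast}=(x\odot x)\oplus x^{\ast}=x\oplus x^{\ast}=1$, so distributivity of the lattice $(L,\wedge,\vee,0,1)$ gives $x\vee(x^{\ast}\wedge y)=(x\vee x^{\ast})\wedge(x\vee y)=x\vee y$, which completes $(1)\Rightarrow(2)$.

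For $(1)\Rightarrow(3)$, note that $x\oplus x=x$ forces $x^{\ast}\odot x^{\ast}=(x\oplus x)^{\ast}=x^{\ast}$, so $x^{\ast}\in B(L)$ as well; applying the already-proved $(1)\Rightarrow(2)$ to $x^{\ast}$ and then substituting $y^{\ast}$ for the free variable gives $x^{\ast}\oplus y^{\ast}=x^{\ast}\vee y^{\ast}$, and taking $\ast$ of both sides, using the De Morgan laws $(a\oplus b)^{\ast}=a^{\ast}\odot b^{\ast}$ and $(a\vee b)^{\ast}=a^{\ast}\wedge b^{\ast}$ (immediate from the definitions) together with $(a^{\ast})^{\ast}=a$, yields $x\odot y=x\wedge y$ for every $y$. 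The one place where idempotency is genuinely used is the identity $x\odot c^{\ast}=x$ inside $(1)\Rightarrow(2)$ — this absorption of the complementary piece $x^{\ast}\wedge y$ is exactly what fails for a non-Boolean element — so I expect that step, and keeping straight which of the listed arithmetical identities licenses each rewrite, to be the main obstacle; the rest is routine bookkeeping.
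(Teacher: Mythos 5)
Your proof is correct. Note that the paper states this proposition only as a cited preliminary (attributed to Chang) and gives no proof of its own, so there is nothing to compare against; your argument --- the easy specializations $y=x$ for $(2)\Rightarrow(1)$ and $(3)\Rightarrow(1)$, the chain $x\oplus y=x\oplus(x^{\ast}\wedge y)=x\vee(x^{\ast}\wedge y)=x\vee y$ via the absorption $x\odot(x^{\ast}\wedge y)^{\ast}=x$ and $x\vee x^{\ast}=1$ for $(1)\Rightarrow(2)$, and the De Morgan dualization for $(1)\Rightarrow(3)$ --- is a complete and standard derivation using only the identities listed in Proposition 2.3.
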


\begin{proposition}\emph{\cite{Turunen} Let $L$ be an MV-algebra and $e\in B(L)$. Then the following properties hold:}
\begin{enumerate}[(1)]
  \item \emph{$e\wedge(x\odot y)=(e\wedge x)\odot(e\wedge y)$,
  \item $e\vee(x\odot y)=(e\vee x)\odot(e\vee y)$,
  \item $e\wedge(x\oplus y)=(e\wedge x)\oplus(e\wedge y)$,
  \item $e\vee(x\oplus y)=(e\vee x)\oplus(e\vee y)$,
  \item $e\odot(x\rightarrow y)=e\odot[(e\odot x)\rightarrow (e\odot y)]$, for any $x,y\in L$. }
\end{enumerate}
\end{proposition}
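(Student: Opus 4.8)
The plan is to reduce the whole statement to part~(3) together with a handful of idempotency identities. For $e\in B(L)$, Proposition~2.4 yields $e\odot z=e\wedge z$ and $e\oplus z=e\vee z$ for all $z\in L$, while by the very definition of $B(L)$ we have $e\odot e=e$ and $e\oplus e=e$, and $e\odot e^{*}=0$ by Proposition~2.3(2). Granting part~(3), the other parts are short. Part~(1) reads $e\odot(x\odot y)=(e\odot x)\odot(e\odot y)$, which is immediate from commutativity and associativity of $\odot$ and $e\odot e=e$; dually part~(4) reads $e\oplus(x\oplus y)=(e\oplus x)\oplus(e\oplus y)$, immediate from $e\oplus e=e$. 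Part~(2) is the $*$-dual of part~(3): since $e^{*}\in B(L)$, one may write the identity of~(3) with $e,x,y$ replaced by $e^{*},x^{*},y^{*}$ and apply $*$, using $(a\oplus b)^{*}=a^{*}\odot b^{*}$ and $(a\vee b)^{*}=a^{*}\wedge b^{*}$, to land exactly on $e\vee(x\odot y)=(e\vee x)\odot(e\vee y)$. For part~(5), write $x\rightarrow y=x^{*}\oplus y$: by part~(3) the left side equals $(e\odot x^{*})\oplus(e\odot y)$ and the right side equals $\bigl(e\odot(e\odot x)^{*}\bigr)\oplus(e\odot y)$; since $(e\odot x)^{*}=e^{*}\oplus x^{*}$, part~(3) again gives $e\odot(e\odot x)^{*}=(e\odot e^{*})\oplus(e\odot x^{*})=e\odot x^{*}$ because $e\odot e^{*}=0$, so the two sides agree.

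Thus the real content is part~(3): $e\odot(x\oplus y)=(e\odot x)\oplus(e\odot y)$ (equivalently with $\wedge$ in place of $\odot$). The inequality $(e\odot x)\oplus(e\odot y)\le e\odot(x\oplus y)$ is easy, since the left-hand side is $\le x\oplus y$ by monotonicity of $\oplus$ and is $\le e\oplus e=e$ because $e\odot x,e\odot y\le e$, hence $\le e\wedge(x\oplus y)=e\odot(x\oplus y)$. For the reverse inequality I will use the auxiliary fact, valid in every MV-algebra,
\[
 a\odot(b\oplus c)\le b\oplus(a\odot c)\qquad(a,b,c\in L).
\]
Its proof: $(b\oplus c)^{*}=b^{*}\odot c^{*}$, so by Proposition~2.3(2) $(b\oplus c)\odot b^{*}\odot c^{*}=(b\oplus c)\odot(b\oplus c)^{*}=0$, i.e.\ $(b\oplus c)\odot b^{*}\le c$ (recall $u\le v$ iff $u\odot v^{*}=0$); multiplying by $a$ and using Proposition~2.3(7) gives $a\odot(b\oplus c)\odot b^{*}\le a\odot c$, whence $a\odot(b\oplus c)\odot\bigl(b\oplus(a\odot c)\bigr)^{*}=a\odot(b\oplus c)\odot b^{*}\odot(a\odot c)^{*}=0$, which is the claimed inequality.

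Applying this twice with $a=e$, and using $e=e\odot e$ together with monotonicity of $\odot$,
\[
 e\odot(x\oplus y)=e\odot\bigl(e\odot(x\oplus y)\bigr)\le e\odot\bigl(x\oplus(e\odot y)\bigr)=e\odot\bigl((e\odot y)\oplus x\bigr)\le(e\odot y)\oplus(e\odot x),
\]
so $e\odot(x\oplus y)\le(e\odot x)\oplus(e\odot y)$, which with the easy half completes part~(3). The only delicate point is spotting the auxiliary inequality and realizing it has to be applied \emph{twice}, once for each copy of $e$ extracted from $e=e\odot e$; a one-step attempt to distribute $e\odot(-)$ over $\oplus$ cannot work, since $\odot$ does not sub-distribute over $\oplus$ for non-idempotent elements, so the idempotency of $e$ must be used in an essential way. (Alternatively one could invoke the decomposition $L\cong[0,e]\times[0,e^{*}]$ induced by $e\in B(L)$ and read the five identities off coordinatewise, but establishing that decomposition is itself essentially parts~(3) and~(4), so it saves no work.)
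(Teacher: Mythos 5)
Your proof is correct, but note that the paper itself offers no argument for this proposition at all: it is quoted as Proposition~2.5 from Turunen's book and used as a black box, so there is no in-paper proof to compare against. On its own merits your reduction is sound. Translating everything through Proposition~2.4 ($e\odot z=e\wedge z$, $e\oplus z=e\vee z$ for $e\in B(L)$) correctly makes (1) and (4) trivial consequences of $e\odot e=e$ and $e\oplus e=e$, the derivation of (2) from (3) by applying $\ast$ to the instance at $e^{\ast},x^{\ast},y^{\ast}$ is legitimate (since $B(L)$ is a subalgebra, $e^{\ast}\in B(L)$, and the De Morgan laws for $\wedge,\vee,\oplus,\odot$ hold in every MV-algebra), and the computation for (5) via $e\odot(e\odot x)^{\ast}=(e\odot e^{\ast})\oplus(e\odot x^{\ast})=e\odot x^{\ast}$ is exactly right. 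The real work, as you say, is (3), and both halves check out: the easy inequality uses only monotonicity and $e\oplus e=e$, and your auxiliary subdistributivity law $a\odot(b\oplus c)\le b\oplus(a\odot c)$ is a genuine MV-identity, correctly proved from $u\le v\Leftrightarrow u\odot v^{\ast}=0$; applying it twice after splitting $e=e\odot e$ is the key step, and your remark that a single application cannot suffice (because $\odot$ does not distribute over $\oplus$ without idempotency) is a fair diagnosis of where Booleanness enters. Compared with the usual textbook route, which reads these identities off the direct decomposition $L\cong[0,e]\times[0,e^{\ast}]$ or off Chang's subdirect representation by chains (where (3) is a one-line case check), your argument is more elementary and entirely equational, at the cost of having to discover the auxiliary inequality; the representation-based route is shorter but presupposes machinery the paper's preliminaries do not develop.
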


Let $L$ be an MV-algebra. A nonempty subset $I$ of $L$ is called an \emph{ideal} of $L$ if it satisfies: (1) $x,y\in I$ implies $x\oplus y\in I$; (2) $x\in I$, $y\in L$ and $y\leq x$ implies $y\in I$. An ideal $I$ of an MV-algebra $L$ is \emph{proper} if $I\neq L$. A proper ideal $I$ of an MV-algebra $L$ is called a prime ideal if for any $x,y\in L$ such that $x\wedge y\in I$, then $x\in I$ or $y\in I$ .A nonempty subset $I$ of an MV-algebra $L$ is called a \emph{lattice
ideal} of $L$ if it satisfies: (i) for all $x,y\in I$, $x\vee y\in I$; (ii) for all $x,y\in L$, if $x\in I$
and $y\leq x$, then $y\in I$. That is, a lattice ideal of an MV-algebra $L$ is the
notion of ideal in the underlying lattice $(L,\wedge,\vee)$. For any nonempty subset $H$ of $L$, the smallest lattice ideal containing $H$ is
called the lattice ideal generated by $H$. The lattice ideal generated by $H$ will be
denoted by $(H]$. In particular, if $H =\{t\}$, we write $(t]$ for $(\{t\}]$, $(t]$ is called a
principal lattice ideal of $L$. It is easy to check that $(t] =\downarrow t =\{x\in L|x\leq t\}$\cite {Mundici1,Mundici2,Gratzer}.

Let $I$ be an ideal of an MV-algebra $L$. We define a binary relation $\theta_I$ on $L$ as follows: for any $x,y\in L$, $(x,y)\in \theta_I$ if and only if $(x\odot y^\ast)\oplus(y\odot x^\ast)\in I$. Then, $\theta_I$ is a congruence relation on $L$. Thus, the binary relation $\leq$ on $L/I$ which is defined by $[x]\leq [y]$, if and only if $x^\ast\oplus y\in I$, is an order relation on $L/F$. For any $x\in L$, let $[x]_I$ be the equivalence class $[x]_{\theta_I}$ and $L/F=L/\theta_I=\{[x]_I|x\in L\}$. Then $L/I$ becomes an MV-algebra with the natural operations induced from those of $L$. For more details about ideals in MV-algebras \cite{Turunen}.

\begin{definition}\emph{\cite{Turunen} Given ordered sets $E,F$ and order-preserving mappings $f:E\longrightarrow F$ and $g:F\longrightarrow E$, we say that the pair $(f,g)$ establishes a \emph{Galois connection} between $E$ and $F$ if $fg\geq id_F$ and $gf\leq id_E$.}
\end{definition}

\section{On derivations of MV-algebras}

In this section, we investigate some derivations in an MV-algebra. Then we give some characterizations of additive derivations. Also, we discuss the relationship between additive derivations and ideals of MV-algebras..

\begin{definition}\emph{\cite{Alshehri} Let $L$ be an MV-algebra. A map $d:L\longrightarrow L$ is called a \emph{ derivation} on $L$ if it satisfies the following conditions: for any $x,y\in L$,}
\begin{center} $d(x\odot y)=(d(x)\odot y)\oplus (x\odot d(y))$.
\end{center}
\end{definition}

Now, we present some examples for derivations on MV-algebras.
\begin{example}\emph{Let $L$ be an MV-algebra. Define a map $d:L\longrightarrow L$ by $d(x)=0$ for all $x\in L$, then $d$ is a derivation on $L$, which is called a zero derivation}.
\end{example}
\begin{example}\emph{Let $L=\{0,a,b,c,d,1\}$ and operations $\oplus$ and $\ast$ be defined as follows}:\\
\begin{center}
\begin{tabular}{c|c c c c c c}
   $\oplus$ & $0$ & $a$ & $b$ & $c$ & $d$ & $1$\\
   \hline
   $0$ & $0$ & $a$ & $b$ & $c$ & $d$ & $1$\\
   $a$ & $a$ & $c$ & $d$ & $c$ & $1$ & $1$ \\
   $b$ & $b$ & $d$ & $b$ & $1$ & $d$ & $1$ \\
   $c$ & $c$ & $c$ & $1$ & $c$ & $1$ & $1$\\
   $d$ & $d$ & $1$ & $d$ & $1$ & $1$ & $1$ \\
   $1$ & $1$ & $1$ & $1$ & $1$ & $1$ & $1$
 \end{tabular} {\qquad}
\begin{tabular}{c|c c c c c c}
   $\ast$ & $0$ & $a$ & $b$ & $c$ & $d$ & $1$\\
   \hline
          & $1$ & $d$ & $c$ & $b$ & $a$ & $0$ \\
   \end{tabular}
\end{center}

\emph{Then $(\{0,a,b,c,d,1\},\oplus,\ast,0,1)$ is an MV-algebra. Define a map $d:L\longrightarrow L$ by $d(0)=d(a)=d(c)=0$, $d(b)=d(d)=d(1)=b$. One can check that $d$ is a derivation on $L$.}
\end{example}

\begin{proposition}\emph{\cite{Alshehri} Let $L$ be an MV-algebra and $d$ be a derivation on $L$. Then we have: for any $x,y\in L$,}
\begin{enumerate}[(1)]
  \item \emph{$d(0)=0$,
  \item $d(1)\in B(L)$,
  \item $d(x)\odot x^\ast=x\odot d(x^\ast)=0$,
  \item $d(x)\leq x$,
  \item $d(x)=d(x)\oplus (x\odot d(1))$.}
\end{enumerate}
\end{proposition}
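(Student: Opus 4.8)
The plan is to derive every one of the five items directly from the single defining identity $d(x\odot y)=(d(x)\odot y)\oplus(x\odot d(y))$ by making judicious substitutions, using only a handful of elementary MV-algebra facts: $x\odot 0=0$ and $x\odot 1=x$ (both immediate from the definitions of $\odot$ and $1$), the law $x\odot x^{\ast}=0$ from Proposition (2), and the ``positivity'' law $a\le a\oplus b$, which itself follows from $a^{\ast}\oplus a=1$ (Proposition (1)) together with (MV3).

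For (1), I would substitute $x=y=0$; since $0\odot 0=0$ and $z\odot 0=0$ for every $z$, the right-hand side collapses to $0\oplus 0=0$, so $d(0)=0$. For (2), substitute $x=y=1$; since $1\odot 1=1$ and $d(1)\odot 1=1\odot d(1)=d(1)$, the identity becomes $d(1)=d(1)\oplus d(1)$, which says $d(1)$ is idempotent, i.e. $d(1)\in B(L)$. For (5), substitute $y=1$; then $x\odot 1=x$ and $d(x)\odot 1=d(x)$, so the identity reads $d(x)=d(x)\oplus(x\odot d(1))$, which is exactly the claim.

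The only item needing a small extra observation is (3). Substituting $y=x^{\ast}$ and using $x\odot x^{\ast}=0$ together with (1), the left-hand side is $d(x\odot x^{\ast})=d(0)=0$, so $(d(x)\odot x^{\ast})\oplus(x\odot d(x^{\ast}))=0$. Here I would invoke the positivity law: from $a\oplus b=0$ and $a\le a\oplus b$, $b\le a\oplus b$ one gets $a\le 0$ and $b\le 0$, hence $a=b=0$. Applying this yields $d(x)\odot x^{\ast}=0$ and $x\odot d(x^{\ast})=0$ simultaneously. Finally, (4) is just a reformulation of (3): $d(x)\odot x^{\ast}=0$ unwinds to $(d(x)^{\ast}\oplus x)^{\ast}=0$, hence $d(x)^{\ast}\oplus x=1$ by (MV2), which is precisely $d(x)\le x$ by the definition of the natural order.

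All five arguments are essentially one-line substitutions into the derivation identity, so there is no serious obstacle; the only place where a little care is required is step (3), namely stating and correctly applying the positivity law $a\le a\oplus b$ to conclude that a sum equal to $0$ forces both summands to vanish.
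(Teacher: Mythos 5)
Your proof is correct: each of the five items does follow from the stated substitutions into $d(x\odot y)=(d(x)\odot y)\oplus(x\odot d(y))$, and the one nontrivial step --- using $a\le a\oplus b$ to conclude that both summands of a sum equal to $0$ must vanish in item (3), then unwinding $d(x)\odot x^{\ast}=0$ to $d(x)^{\ast}\oplus x=1$ for item (4) --- is handled properly. The paper itself states this proposition without proof, citing Alshehri, so there is no in-paper argument to compare against; your derivation is the standard one.
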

\begin{definition}\emph{\cite{Alshehri} Let $L$ be an MV-algebra and $d$ be a derivation on $L$}.
\begin{enumerate}[(1)]
  \item \emph{$d$ is called an \emph{isotone derivation} provided that $x\leq y$ implies $d(x)\leq d(y)$ for all $x,y\in L$,
  \item $d$ is called an \emph{additive derivation} provided that $d(x\oplus y)=d(x)\oplus d(y)$ for all $x,y\in L$.}
\end{enumerate}
\end{definition}

\begin{example}\emph{Considering the MV-algebra $S_4=\{0,\frac{1}{3},\frac{2}{3},1\}$ in Example 2.2. Define a map $d:S_4\longrightarrow S_4$ by $d(0)=d(1)=d(\frac{1}{3})=0$, $d(\frac{2}{3})=\frac{1}{3}$. One can check that $d$ is a derivation on $S_4$, but it is not an additive derivation on $L$, since $d(\frac{1}{3}+\frac{2}{3})=d(1)=0\neq \frac{1}{3}=d(\frac{1}{3})+d(\frac{2}{3})$. Also, $d$ is not an isotone derivation on $L$, although $\frac{2}{3}\leq 1$, $d(\frac{2}{3})=\frac{1}{3}\geq 0=d(1)$. }
\end{example}

\begin{example}\emph{Considering the derivation $d$ in Example 3.3, one can see that $d$ is not only an additive and but also is an isotone derivation on $L$.}
\end{example}
\begin{proposition} \emph{Let $L$ be an MV-algebra and $d$ be an additive derivation on $L$. Then we have: for any $x,y\in L$,}
\begin{enumerate}[(1)]
  \item \emph{$d$ is an isotone derivation,
  \item $d(x)=d(1)\odot x$,
  \item $dd(x)=d(x)$,
  \item $d(x)\in B(L)$,
  \item $d(d(x)\rightarrow d(y))=d(x\rightarrow y)$,
  \item $Fix_d(L)=d(L)$, where $Fix_d(L)=\{x\in L|d(x)=x\}$,
  \item if $d(L)=L$, then $d=id_L$,
  \item Ker$(d)$ is an ideal of $L$, where Ker$(d)=\{x\in L|d(x)=0\}$.}
\end{enumerate}
\end{proposition}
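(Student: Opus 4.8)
The plan is to prove the eight items essentially in the stated order, since each one rests on its predecessors together with the basic facts collected in Propositions 2.4, 2.5 and 3.4. For (1), from $x\le y$ I would use that $y=x\vee y=(y\odot x^{\ast})\oplus x$ (the standard expression for the join), so $y=x\oplus(y\odot x^{\ast})$, and then apply additivity: $d(y)=d(x)\oplus d(y\odot x^{\ast})\ge d(x)$, since $a\le a\oplus b$ always holds in an MV-algebra.

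Item (2) is the heart of the argument, and once it is in place the rest is essentially bookkeeping. By Proposition 3.4 we have $d(1)\in B(L)$, $d(x)\le x$, and $d(x)=d(x)\oplus(x\odot d(1))$; the last identity immediately gives $x\odot d(1)\le d(x)$. For the reverse inequality, item (1) gives $d(x)\le d(1)$ (since $x\le 1$), hence $d(x)\le x\wedge d(1)$, and because $d(1)\in B(L)$ we have $x\wedge d(1)=x\odot d(1)$ by Proposition 2.4. Therefore $d(x)=d(1)\odot x$. The delicate point is precisely that one cannot invoke $d(x)\in B(L)$ at this stage, so the equality has to be obtained by squeezing $d(x)$ from below (via Proposition 3.4(5)) and from above (via isotonicity and $d(1)\in B(L)$); this is the only step that requires any real thought.

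From (2) the remaining items follow quickly. For (4), combining Proposition 3.4(5) with (2) gives $d(x)=d(x)\oplus(x\odot d(1))=d(x)\oplus d(x)$, so $d(x)$ is idempotent, that is, $d(x)\in B(L)$. For (3), applying (2) to $d(x)$ and using that $d(1)$ is idempotent, $dd(x)=d(1)\odot d(x)=d(1)\odot d(1)\odot x=d(1)\odot x=d(x)$. For (5), apply (2) to $x\to y$ and to $d(x)\to d(y)$ and then invoke Proposition 2.5(5) with $e=d(1)$, namely $d(1)\odot(x\to y)=d(1)\odot((d(1)\odot x)\to(d(1)\odot y))$, which chains the two sides together.

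For (6), the inclusion $Fix_d(L)\subseteq d(L)$ is trivial, while if $y=d(x)\in d(L)$ then $d(y)=dd(x)=d(x)=y$ by (3), giving the reverse inclusion; (7) then follows at once, since $d(L)=L$ forces $Fix_d(L)=L$. For (8), $0\in\mathrm{Ker}(d)$ by Proposition 3.4(1), closure under $\oplus$ is immediate from additivity, and downward closure is precisely item (1), so $\mathrm{Ker}(d)$ is an ideal of $L$.
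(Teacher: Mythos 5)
Your proposal is correct and follows essentially the same route as the paper: item (1) via the decomposition $y=x\oplus(x^{\ast}\odot y)$ and additivity, item (2) by squeezing $d(x)$ between $x\odot d(1)$ (from Proposition 3.4(5)) and $x\wedge d(1)=x\odot d(1)$ (from isotonicity and $d(1)\in B(L)$), and the remaining items as routine consequences of (2) exactly as in the paper. No gaps.
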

\begin{proof} (1) If $x\leq y$, then $y=x\vee y=x\oplus(x^\ast\odot y)$. From Definition 3.5(2), we have $d(y)=d(x\oplus(x^\ast\odot y))=d(x)\oplus d(x^\ast\odot y)\geq d(x)$. Thus, $d(x)\leq d(y)$.

(2) From Proposition 3.4(2),(4) and (1), we have $d(x)=d(1)\wedge d(x)=d(1)\odot d(x)\leq d(1)\odot x$. On the other hand, we conclude from Definition 3.1 that $d(x)\geq x\odot d(1)$. Thus, $d(x)=d(1)\odot x$.

(3) From Proposition 3.4(2) and (2), we have $dd(x)=d(d(1)\odot x)=d(1)\odot d(1)\odot x=d(1)\odot x=d(x)$.

(4) From Definition 3.1 and (2), we have $d(x)=d(x\odot 1)=(d(x)\odot 1)\oplus (x\odot d(1))=d(x)\oplus (x\odot d(1))=d(x)\oplus d(x)$. Thus, $d(x)\in B(L)$.

(5) From Proposition 2.5(5), 3.4(2) and (2), we have $d(d(x)\rightarrow d(y))=d(1)\odot[(d(1)\odot x)\rightarrow(d(1)\odot y)]=d(1)\odot(x\rightarrow y)=d(x\rightarrow y)$. Thus, $d(d(x)\rightarrow d(y))=d(x\rightarrow y)$.

(6) Let $y\in d(L)$. So there exists $x\in L$ such that $y=d(x)$ and hence $d(y)=dd(x)=d(x)=y$, that is, $y\in Fix_d(L)$. Conversely, if $y\in Fix_d(L)$, then we have $y\in d(L)$. Therefore, $Fix_d(L)=d(L)$.

(7) For any $x\in L$, we have $x=d(x_0)$ for some $x_0\in L$. From (3), we have $d(x)=dd(x_0)=d(x_0)=x$. Therefore, $d=id_L$.

(8) From Proposition 3.4(1), we have $d(0)=0$, that is, $0\in$ Ker$(d)$. If $x,y\in$ Ker$(d)$, then $d(x)=d(y)=0$ and hence $d(x\oplus y)=d(x)\oplus d(y)=0$, that is, $x\oplus y\in$ Ker$(d)$. Finally, if $x\leq y$ and $y\in$ Ker$(d)$, then $d(x)\leq d(y)=0$ and hence $d(x)=0$, that is, $x\in$ Ker$(d)$.

\end{proof}
\begin{remark}\emph{ In \cite{Alshehri}, the results (1) and (8) in Proposition 3.8 are proved as Theorems 3.16, 3.17 under the additional condition that $L$ is an linearly ordered MV-algebra, but this condition is redundant as our proof shows.}
\end{remark}

\begin{theorem} \emph{Let $L$ be an MV-algebra and $d$ be a derivation on $L$. Then the following are equivalent:}
\begin{enumerate}[(1)]
  \item \emph{$d$ is an additive derivation,
  \item $d$ is an isotone derivation,
  \item $d(x)\leq d(1)$,
  \item $d(x)=d(1)\odot x$,
  \item $d(x\odot y)=d(x)\odot y=x\odot d(y)$,
  \item $d(x\wedge y)=d(x)\wedge d(y)$,
  \item $d(x\vee y)=d(x)\vee d(y)$,
  \item $d(x\odot y)=d(x)\odot d(y)$,
  \item $d(x)\leq y$ if and only if $d(x)\leq d(y)$,
  \item $d(x)\rightarrow d(y)=d(x)\rightarrow y$.}
\end{enumerate}
\end{theorem}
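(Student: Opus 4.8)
The plan is to prove the ten conditions equivalent by first closing the cycle $(1)\Rightarrow(2)\Rightarrow(3)\Rightarrow(4)\Rightarrow(1)$, and then hooking each of $(5)$--$(10)$ onto it by showing that $(4)$ implies it and that it implies one of $(1)$--$(4)$ (most often $(3)$). For the backbone: $(1)\Rightarrow(2)$ is Proposition 3.8(1); $(2)\Rightarrow(3)$ is immediate from $x\leq 1$; and $(4)\Rightarrow(1)$ uses that $d(1)\in B(L)$ (Proposition 3.4(2)) to rewrite $d(x\oplus y)=d(1)\odot(x\oplus y)=d(1)\wedge(x\oplus y)=(d(1)\wedge x)\oplus(d(1)\wedge y)=d(x)\oplus d(y)$ via Propositions 2.4(3) and 2.5(3). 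The step that needs a little care is $(3)\Rightarrow(4)$: putting $y=1$ in Definition 3.1 gives $x\odot d(1)\leq d(x)$, Proposition 3.4(4) gives $d(x)\leq x$, and hypothesis $(3)$ gives $d(x)\leq d(1)$; since $d(1)\in B(L)$ we have $x\wedge d(1)=x\odot d(1)$ (Proposition 2.4(3)), whence $d(x)\leq x\wedge d(1)=x\odot d(1)\leq d(x)$, so $d(x)=d(1)\odot x$.

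With $(1)$--$(4)$ available I would take $(4)$ as the pivot. The implications $(4)\Rightarrow(5)$ and $(4)\Rightarrow(8)$ are direct rewritings of $d(x\odot y)=d(1)\odot x\odot y$ (for $(8)$ using the idempotence $d(1)\odot d(1)=d(1)$); $(4)\Rightarrow(6)$ turns $\odot\, d(1)$ into $\wedge\, d(1)$ (Proposition 2.4(3)) and is then an elementary lattice identity; $(4)\Rightarrow(7)$ uses $d(1)\odot(x\vee y)=(d(1)\odot x)\vee(d(1)\odot y)$ from Proposition 2.3(10); $(4)\Rightarrow(9)$ holds because $d(x)\leq y$ together with $d(x)\leq d(1)$ gives $d(x)\leq d(1)\wedge y=d(1)\odot y=d(y)$, while the converse holds for any derivation since $d(y)\leq y$; and $(4)\Rightarrow(10)$ is the most computational step: writing $d(x)^\ast=(d(1)\odot x)^\ast=d(1)^\ast\oplus x^\ast$ (De Morgan, immediate from the definition of $\odot$) and $d(y)=d(1)\wedge y$, and using $d(1)^\ast\oplus(d(1)\wedge y)=(d(1)^\ast\oplus d(1))\wedge(d(1)^\ast\oplus y)=d(1)^\ast\oplus y$ from Proposition 2.3(11) and (1), one gets $d(x)^\ast\oplus d(y)=d(1)^\ast\oplus x^\ast\oplus y=d(x)^\ast\oplus y$.

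For the return arrows: $(5)\Rightarrow(4)$, $(6)\Rightarrow(3)$, $(8)\Rightarrow(3)$ and $(9)\Rightarrow(3)$ all come from substituting $x=1$ (resp.\ $y=1$) into the stated identity, using $x\wedge 1=x$, $x\odot 1=x$, $a\odot b\leq a$ (Proposition 2.3(4)), and the automatic bound $d(x)\leq 1$; $(7)\Rightarrow(2)$ because for $x\leq y$ we obtain $d(y)=d(x\vee y)=d(x)\vee d(y)$, i.e.\ $d(x)\leq d(y)$; and $(10)\Rightarrow(9)$ because $d(x)\leq y\iff d(x)\rightarrow y=1\iff d(x)\rightarrow d(y)=1\iff d(x)\leq d(y)$ by Proposition 2.3(3). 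This links every statement back to the cycle, so all ten are equivalent. I expect no single implication to be a real obstacle; the genuine work is organizing an economical implication graph, and the only steps with actual content are $(3)\Rightarrow(4)$ and $(4)\Rightarrow(10)$, where one must actively use that $d(1)$ is a Boolean element together with the MV-distributivity laws rather than relying on formal manipulation alone.
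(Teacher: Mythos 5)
Your proof is correct, and its backbone coincides with the paper's: the cycle $(1)\Rightarrow(2)\Rightarrow(3)\Rightarrow(4)\Rightarrow(1)$ is established by exactly the same arguments (isotonicity from additivity, the two-sided squeeze $x\odot d(1)\leq d(x)\leq x\wedge d(1)=x\odot d(1)$ using $d(1)\in B(L)$, and the distributivity of $\wedge$ over $\oplus$ for a Boolean element), and conditions $(5)$--$(8)$ are hooked on in the same way. Where you genuinely diverge is in $(9)$ and $(10)$. For $(9)$ the paper proves $(2)\Leftrightarrow(9)$ using $dd(x)=d(x)$, whereas you prove $(4)\Rightarrow(9)\Rightarrow(3)$; both work, and your return arrow (set $y=1$) is slightly more economical than the paper's. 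For $(10)$ the difference is more substantial: the paper argues $(2)\Rightarrow(10)$ order-theoretically, taking an arbitrary $t\leq d(x)\rightarrow y$, deducing $d(x)\odot t=dd(x)\odot t\leq d(d(x)\odot t)\leq d(y)$ from isotonicity and $dd=d$, and concluding by residuation; you instead compute directly from $(4)$ that $d(x)^\ast\oplus d(y)=x^\ast\oplus d(1)^\ast\oplus(d(1)\wedge y)=x^\ast\oplus d(1)^\ast\oplus y=d(x)^\ast\oplus y$ via Proposition 2.3(11), and close the loop with $(10)\Rightarrow(9)$ rather than $(10)\Rightarrow(2)$. Your computation is cleaner and avoids invoking $dd=d$, at the cost of routing through $(4)$ rather than the weaker hypothesis $(2)$; since all of $(1)$--$(4)$ are already known to be equivalent at that stage, nothing is lost, and both implication graphs are complete.
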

\begin{proof} $(1)\Rightarrow (2)$ It follows from Proposition 3.8(1)

$(2)\Rightarrow (3)$ It is straightforward.

$(3)\Rightarrow (4)$ It follows from Proposition 3.8(2).

$(4)\Rightarrow (1)$ From Proposition 2.5(4) and Proposition 3.8(2), we have $d(x\oplus y)=d(1)\odot(x\oplus y)=(d(1)\odot x)\oplus (d(1)\odot y)=d(x)\oplus d(y)$, that is, $d(x\oplus y)=d(x)\oplus d(y)$.

From the above proof, one can see the statements (1)-(4) are equivalent.

$(4)\Rightarrow (5)$ Let $d(x)=d(1)\odot x$ for all $x\in L$. It follows that $d(x\odot y)=d(1)\odot x\odot y=x\odot (d(1)\odot y)$, that is, $d(x\odot y)=d(x)\odot y=x\odot d(y)$ for any $x,y\in L$.

$(5)\Rightarrow (4)$ Taking $y=1$ in (5), we have $d(x)=d(1)\odot x$ for any $x\in L$.

$(4)\Rightarrow (6)$ Let $d(x)=d(1)\odot x$ for any $x\in L$. It follows that $d(x\wedge y)=d(1)\odot(x\wedge y)=d(1)\wedge (x\wedge y)=d(1)\wedge d(1)\wedge x\wedge y=d(x)\wedge d(y)$ for any $x,y\in L$.

$(4)\Rightarrow (7)$ Since $(L,\wedge,\vee,0,1)$ is a distributive lattice. From (4), we have $d(x\vee y)=d(1)\odot(x\vee y)=d(1)\wedge(x\vee y)=(d(1)\wedge x)\vee(d(1)\wedge y)=(d(1)\odot x)\vee(d(1)\odot y)=d(x)\vee d(y)$ for any $x,y\in L$.

$(7)\Rightarrow (2)$ Let $x\leq y$, we have $x\vee y=y$. From (7), we $d(y)=d(x\vee y)=d(x)\vee d(y)\geq d(x)$ for any $x,y\in L$. Therefore, $d$ is an isotone derivation.

$(4)\Rightarrow (8)$ From (4), we have $d(x\odot y)=d(1)\odot (x\odot y)=d(1)\odot d(1)\odot x\odot y=d(x)\odot d(y)$ for any $x,y\in L$.

$(6)\Rightarrow(3)$, $(7)\Rightarrow (3)$, $(8)\Rightarrow (3)$ are straightforward.

$(2)\Rightarrow (9)$  For all $x,y\in L$, assume that $d(x)\leq y$, we have $dd(x)\leq d(y)$. By 3.8(2), we get $dd(x)=d(x)$. Thus $d(x)\leq d(y)$. Conversely, suppose that $d(x)\leq d(y)$, we get $d(x)\leq d(y)\leq y$ for all $x,y\in L$.

$(9)\Rightarrow (2)$ Let $x\leq y$, we have $d(x)\leq x\leq y$ and hence $d(x)\leq d(y)$ for all $x,y\in L$.

$(2)\Rightarrow (10)$ Suppose that $d$ is an isotone derivation on $L$. From $d(y)\leq y$ for any $y\in L$, it follows that $d(x)\rightarrow d(y)\leq d(x)\rightarrow y$. On the other hand, let $t\leq d(x)\rightarrow y$ for all $t\in L$, we can obtain $d(x)\odot t\leq y$. Since $d$ is an isotone derivation, we have $d(d(x)\odot t)\leq d(y)$ for all $x,y,t\in L$. From $d(x\odot y)=(d(x)\odot y)\oplus(x\odot d(y))$, we get $d(x)\odot y\leq d(x\odot y)$ for all $x,y\in L$. It follows $d(d(x))\odot t\leq d(d(x)\odot t)$. By Proposition 3.8(3), we have $d(x)\odot t\leq d(d(x)\odot t)\leq d(y)$. Hence $t\leq d(x)\rightarrow d(y)$ for all $t\in L$, which implies $d(x)\rightarrow y\leq d(x)\rightarrow d(y)$ for all $x,y\in L$. Therefore, we obtain $d(x)\rightarrow d(y)=d(x)\rightarrow y$ for any $x,y\in L$.

$(10)\Rightarrow (2)$ Assume that $d(x)\rightarrow d(y)=d(x)\rightarrow y$ for all $x,y\in L$. For any $x,y\in L$,let $x\leq y$, by Proposition 3.4(4), we have $d(x)\odot 1=d(x)\leq x\leq y$. It follows that $1\leq d(x)\rightarrow y=d(x)\rightarrow d(y)$, which implies $d(x)\leq d(y)$.
\end{proof}

\begin{remark}\begin{enumerate}[(1)]
                \item \emph{From the above theorem, one can see that isotone derivations are equivalent to additive derivations on MV-algebras.
                \item From the Example 3.3, one can check that an additive derivation is not a homomorphism on an MV-algebra $L$ in general, since $d(a^\ast)=b\neq 1=(d(a))^\ast$.
                \item From the Example 3.3, one can check that the fixed point set of an additive derivation $d$ is not a subalgebra of an MV-algebra $L$ in general, since $0^\ast=1\notin \{0,b\}=Fix_d(L)$.
                \item From (4) of the above theorem, one can see that every additive derivation $d$ on an MV-algebra $L$ is completely defined by the image $d(1)$ of the $1$.
                \item It is easily seen that every additive derivation is a lattice derivation in the sense of Ferrari \cite{Ferrari}, which is a unary map $d:L\rightarrow L$ satisfying conditions Theorem 3.10(7) and $d(x\wedge y)=(d(x)\wedge y)\vee (x\wedge d(y))$. However, the converse is not true in general. Moreover, someone proved that a lattice derivation on an MV-algebra is an additive derivation if and only if $d(L)\subseteq B(L)$ in \cite{Ghorbain}.}
              \end{enumerate}
\end{remark}

The following theorem shows that the fixed point set $Fix_d(L)$ of additive derivations in an MV-algebra $L$ has the same structure as $L$, which reveals the essence of the fixed point set of  additive derivations.

\begin{theorem}\emph{Let $L$ be an MV-algebra and $d$ be an additive derivation on $L$. Then $(Fix_d(L), \oplus, \neg, 0)$ is an MV-algebra, where $\neg x=d(x^\ast)=d(1)\odot x^\ast=(x\oplus (d(1))^\ast)^\ast$ for any $x\in Fix_d(L)$.}
\end{theorem}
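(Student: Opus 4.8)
The plan is to realize $(Fix_d(L),\oplus,\neg,0)$ as a surjective homomorphic image of the MV-algebra $L$ and then to appeal to the fact that MV-algebras form a variety. The map in question is $\varphi\colon L\to L$, $\varphi(x)=d(x)=d(1)\odot x$. By Proposition 3.8(2),(6) its image is $\varphi(L)=d(L)=Fix_d(L)$, and by Proposition 3.4(1) we have $\varphi(0)=0$. Before invoking any homomorphism I would first check that $\oplus$, $\neg$ and $0$ really are operations on $Fix_d(L)$. The three expressions given for $\neg x$ agree for every $x\in L$, since $d(x^\ast)=d(1)\odot x^\ast$ is Proposition 3.8(2) applied to $x^\ast$, and $d(1)\odot x^\ast=(x\oplus(d(1))^\ast)^\ast$ is the De Morgan law $(a\oplus b)^\ast=a^\ast\odot b^\ast$ together with $(d(1))^{\ast\ast}=d(1)$; hence $\neg x=d(x^\ast)\in d(L)=Fix_d(L)$ for $x\in Fix_d(L)$, and $x\oplus y=d(a)\oplus d(b)=d(a\oplus b)\in Fix_d(L)$ whenever $x=d(a)$ and $y=d(b)$ lie in $Fix_d(L)$, because $d$ is additive.

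The core of the argument is showing that $\varphi$ preserves the three fundamental operations. Preservation of $0$ is Proposition 3.4(1) and preservation of $\oplus$ is precisely additivity of $d$. The one step requiring a computation is $\varphi(x^\ast)=\neg\varphi(x)$: using Proposition 3.8(2) twice, $\neg\varphi(x)=d(1)\odot(d(x))^\ast=d(1)\odot(d(1)\odot x)^\ast=d(1)\odot((d(1))^\ast\oplus x^\ast)$, and since $d(1)\in B(L)$ by Proposition 3.4(2), multiplication by $d(1)$ coincides with meet by $d(1)$ (Proposition 2.4), so Proposition 2.5(3) gives $d(1)\odot((d(1))^\ast\oplus x^\ast)=(d(1)\wedge(d(1))^\ast)\oplus(d(1)\wedge x^\ast)=0\oplus(d(1)\odot x^\ast)=d(x^\ast)=\varphi(x^\ast)$. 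Thus $\varphi$ is a surjective homomorphism of algebras of type $(2,1,0)$ from $L$ onto $(Fix_d(L),\oplus,\neg,0)$.

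It then remains to conclude. Because the class of MV-algebras is equational, every identity valid in $L$ — in particular (MV1)--(MV4) — is inherited by the surjective homomorphic image $(Fix_d(L),\oplus,\neg,0)$: given elements of $Fix_d(L)$, choose $\varphi$-preimages in $L$, apply the identity there, and transport it forward through $\varphi$. Hence $(Fix_d(L),\oplus,\neg,0)$ is an MV-algebra; its constant $1$ is $\neg 0=d(0^\ast)=d(1)$ and its universe is $d(1)\odot L=\{y\in L: y\le d(1)\}$. I expect the only non-formal point to be the identity $\varphi(x^\ast)=\neg\varphi(x)$, which genuinely needs both that $d(1)$ is Boolean and the distributivity of a Boolean element over $\oplus$ (Proposition 2.5(3)); an alternative, more laborious route would be to verify (MV1)--(MV4) directly on the interval $[0,d(1)]$, but it leans on the same facts about $d(1)$. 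It is also worth making the closure of $Fix_d(L)$ under $\oplus$ and $\neg$ explicit before speaking of a homomorphism, as done above.
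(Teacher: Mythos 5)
Your proof is correct, but it takes a genuinely different route from the paper's. The paper verifies the axioms (MV1)--(MV4) for $(Fix_d(L),\oplus,\neg,0)$ by direct computation inside $L$, using $d(x)=d(1)\odot x$ and the lattice identities for the Boolean element $d(1)$; you instead exhibit $\varphi=d\colon L\to Fix_d(L)$ as a surjection commuting with $\oplus$, with $0$, and with the unary operations (i.e.\ $\varphi(x^\ast)=\neg\varphi(x)$), and then transport the MV identities from $L$ to the image via the standard term-by-term argument, which is legitimate since MV-algebras form a variety. Your one nontrivial computation, $d(1)\odot(d(1)\odot x)^\ast=(d(1)\wedge(d(1))^\ast)\oplus(d(1)\wedge x^\ast)=d(1)\odot x^\ast$, is exactly where the Boolean character of $d(1)$ and Proposition 2.5(3) enter, and it is sound; the closure checks you perform beforehand are also the right prerequisites for speaking of a homomorphism of type $(2,1,0)$. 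Interestingly, the paper obtains the statement ``$d\colon L\to Fix_d(L)$ is a surjective homomorphism'' only afterwards (Theorem 3.13(1)), as a consequence of Theorems 3.10 and 3.12, whereas you prove it first and derive Theorem 3.12 from it; your ordering arguably localizes all the computational content in a single identity and makes the axiom verification automatic, at the cost of invoking the universal-algebra fact about preservation of identities under surjective images, which the paper's elementary verification avoids.
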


\begin{proof} First, we will prove that $(Fix_d(L),\oplus,0)$ is a commutative monoid. From Definition 3.5(2), we have $Fix_d(L)$ is closed under $\oplus$. It follows that $(Fix_d(L),\oplus)$ is commutative semigroup. For all $x\in Fix_d(L)$, we can obtain that $x\oplus 0=x$, that is, $0$ is a unital element.

Next, we will prove that $Fix_d(L)$ is closed under $\neg$. For all $x\in Fix_d(L)$, from Proposition 3.4(2) and 3.8(2), we have $d(\neg x)=d(d(x^\ast))=d(1)\odot d(1)\odot x^\ast=d(1)\odot x^\ast=d(x^\ast)=\neg x=\neg d(x)$, that is, $Fix_d(L)$ is closed under $\neg$.

Finally, we will verify the remaining axioms of an MV-algebra.

(MV2) For all $x\in Fix_d(L)$, from Theorem 3.10(3), we have $\neg\neg x=(\neg x\oplus (d(1)^\ast)^\ast=((x\oplus (d(1))^\ast)\oplus (d(1))^\ast)^\ast=(x\oplus (d(1))^\ast)\odot d(1)=x\wedge d(1)=d(x)\wedge d(1)=d(x)=x$, that is, $\neg\neg x=x$.

(MV3) For all $x\in Fix_d(L)$, from Theorem 3.10(3), we have $x\oplus \neg 0=x\oplus d(1)=d(x)\oplus d(1)=d(x)\vee d(1)=d(1)$, that is, $x\oplus \neg 0=d(1)$.

(MV4) For any $x,y\in Fix_d(L)$, we have $\neg(\neg x\oplus y)\oplus y=(\neg x\oplus y\oplus (d(1))^\ast)^\ast\oplus y=(x\oplus (d(1))^\ast)^\ast\oplus y)\oplus(d(1))^\ast)^\ast\oplus y=((x\oplus (d(1))^\ast)\odot d(1)\odot y^\ast)\oplus y=((x\wedge d(1))\odot y^\ast)\oplus y=x\vee y$. in the similar way, one can prove that $\neg(x\oplus \neg y)\oplus x=x\vee y$, and hence $\neg(\neg x\oplus y)\oplus y=\neg(x\oplus \neg y)\oplus x$.

Therefore, we obtain that $(Fix_d(L), \oplus, \neg, 0)$ is an MV-algebra.
\end{proof}

\begin{theorem}\emph{Let $L$ be an MV-algebra and $d$ an additive derivation on $L$. Then we have the following properties:}
\begin{enumerate}[(1)]
  \item \emph{$d:L\longrightarrow Fix_d(L)$ is a surjective homomorphism,
  \item $\bar{d}:L/Ker(d)\longrightarrow Fix_d(L)$ is an isomorphism.}
\end{enumerate}
\end{theorem}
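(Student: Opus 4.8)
The plan is to establish (1) directly and then obtain (2) from (1) by the first isomorphism theorem for MV-algebras; the genuine computations all live in (1).

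For (1), surjectivity of $d\colon L\to Fix_d(L)$ is immediate, since $d(L)=Fix_d(L)$ by Proposition 3.8(6). To see that $d$ is a homomorphism from $(L,\oplus,{}^\ast,0)$ onto the MV-algebra $(Fix_d(L),\oplus,\neg,0)$ of Theorem 3.12, I would verify preservation of the three operations: $d(0)=0$ by Proposition 3.4(1); $d(x\oplus y)=d(x)\oplus d(y)$ is exactly additivity; and the only identity requiring work, $d(x^\ast)=\neg\,d(x)$, follows from $d(z)=d(1)\odot z$ (Proposition 3.8(2)), the definition $\neg u=d(u^\ast)$ on $Fix_d(L)$, and $d(1)\in B(L)$ (Proposition 3.4(2)) via Proposition 2.4(3): indeed $\neg d(x)=d\big((d(1)\odot x)^\ast\big)=d\big((d(1))^\ast\oplus x^\ast\big)=d(1)\odot\big((d(1))^\ast\oplus x^\ast\big)=\big(d(1)\odot(d(1))^\ast\big)\oplus\big(d(1)\odot x^\ast\big)=d(1)\odot x^\ast=d(x^\ast)$. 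Hence $d$ is a surjective homomorphism of MV-algebras.

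For (2), since by (1) the map $d\colon L\to Fix_d(L)$ is a surjective MV-homomorphism and $\mathrm{Ker}(d)=d^{-1}(\{0\})$ is an ideal of $L$ by Proposition 3.8(8), the first isomorphism theorem for MV-algebras gives that $\bar d\colon L/\mathrm{Ker}(d)\to Fix_d(L)$, $[x]_{\mathrm{Ker}(d)}\mapsto d(x)$, is an isomorphism. If a self-contained argument is preferred, I would instead show directly that $\theta_{\mathrm{Ker}(d)}$ is the kernel congruence of $d$: writing $\delta=d(1)$, additivity together with $d(z)=\delta\odot z$ yields $d\big((x\odot y^\ast)\oplus(y\odot x^\ast)\big)=\delta\odot\big((x\odot y^\ast)\oplus(y\odot x^\ast)\big)$; if $d(x)=d(y)$ then $\delta\odot x\odot y^\ast=(\delta\odot y)\odot y^\ast=0$ and symmetrically $\delta\odot y\odot x^\ast=0$, so $(x,y)\in\theta_{\mathrm{Ker}(d)}$; conversely $(x,y)\in\theta_{\mathrm{Ker}(d)}$ forces $\delta\odot(x\odot y^\ast)=0=\delta\odot(y\odot x^\ast)$, i.e. $d(x)=\delta\odot x\le y$ and $d(y)\le x$, and applying $d$ (isotone, with $dd=d$ by Proposition 3.8(3)) gives $d(x)\le d(y)\le d(x)$. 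Thus $\bar d$ is well defined and injective, it is surjective by (1), and it is a homomorphism since $d$ is, hence an isomorphism.

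The step I expect to be the main obstacle is the complement identity $d(x^\ast)=\neg\,d(x)$ in (1): it is the one place where $d(1)\in B(L)$ and the Boolean distributivity of Proposition 2.4 are essential, and it is exactly what makes $d$ a homomorphism onto $Fix_d(L)$ even though, by Remark 3.11(2), $d$ is not a homomorphism $L\to L$. The analogous subtlety in (2) is the verification that $\theta_{\mathrm{Ker}(d)}$ coincides with the congruence $d(x)=d(y)$, whose nontrivial direction again rests on $d(1)\in B(L)$ and on isotonicity; everything else is routine bookkeeping.
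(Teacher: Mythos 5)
Your proposal is correct and follows essentially the same route as the paper: the paper's proof of (1) simply cites Theorems 3.10 and 3.12, and your explicit verification of $d(x^\ast)=\neg\,d(x)$ via $d(1)\in B(L)$ and Proposition 2.4(3) is exactly the computation that citation hides; your treatment of (2) matches the paper's well-definedness argument (the paper likewise reduces $(x,y)\in\theta_{\mathrm{Ker}(d)}$ to $d(x)\le y$ and $d(y)\le x$ and applies Theorem 3.10(9)), while additionally making the injectivity direction explicit, which the paper leaves to the first isomorphism theorem.
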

\begin{proof}$(1)$ It follows from Theorem 3.10 and 3.12.

$(2)$ From Proposition 3.8(8), we obtain that $Ker(d)$ is an ideal of $L$. Let $x\sim_{Ker(d)}y$. Then $(x\ominus y)\oplus(y\ominus x)\in Ker(d)$, which implies $d((x\ominus y)\oplus(y\ominus x))=d(x\ominus y)\oplus d(y\ominus x)=0$. Furthermore, from Theorem 3.10(8), we have $d(x\ominus y)=d(x)\odot d(y^\ast)=d(x)\odot y^\ast=d(y\ominus x)=d(y)\odot x^\ast=0$, that is, $d(x)\leq y$ and $x\leq d(y)$. Further by Theorem 3.10(9), we have $d(x)=d(y)$. Thus, $\bar{d}$ is well defined. Moreovwe, it follows from $(1)$ that $\bar{d}:L/Ker(d)\longrightarrow Fix_d(L)$ is an isomorphism.
\end{proof}

\begin{theorem}\emph{Let $L$ be an MV-algebra and $d:L\rightarrow L$ be a map on $L$ such that $d(L)\subseteq B(L)$. Then the following statements are equivalent:}
\begin{enumerate}[(1)]
  \item \emph{$d$ is an additive derivation on $L$,
  \item $d(x)=d(1)\odot x$,
  \item $d(x\odot y)=d(x)\odot y=x\odot d(y)$.}
\end{enumerate}
\end{theorem}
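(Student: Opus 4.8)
The plan is to prove the chain of implications $(1)\Rightarrow(2)\Rightarrow(3)\Rightarrow(1)$. The first two implications are almost immediate, and all of the real content — the place where the hypothesis $d(L)\subseteq B(L)$ is genuinely needed — lies in $(3)\Rightarrow(1)$.

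For $(1)\Rightarrow(2)$, if $d$ is an additive derivation then Proposition 3.8(2) already gives $d(x)=d(1)\odot x$, so there is nothing to do; note that in this case $d(L)\subseteq B(L)$ holds automatically by Proposition 3.8(4). For $(2)\Rightarrow(3)$, assuming $d(x)=d(1)\odot x$, the associativity and commutativity of $\odot$ give
\[
d(x\odot y)=d(1)\odot x\odot y=(d(1)\odot x)\odot y=d(x)\odot y=x\odot(d(1)\odot y)=x\odot d(y).
\]

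For $(3)\Rightarrow(1)$, I would first recover condition $(2)$ by setting $y=1$ in $(3)$: $d(x)=d(x\odot 1)=x\odot d(1)=d(1)\odot x$. Then two things remain. First, $d$ is a derivation in the sense of Definition 3.1: since $x\odot y\in L$ and $d(L)\subseteq B(L)$, the element $d(x\odot y)$ is idempotent, hence
\[
(d(x)\odot y)\oplus(x\odot d(y))=d(x\odot y)\oplus d(x\odot y)=d(x\odot y),
\]
where the first equality applies $(3)$ to each summand. Second, $d$ is additive: since $d(1)\in B(L)$ we have $d(1)\odot z=d(1)\wedge z$ for every $z$ by Proposition 2.3, and a Boolean element distributes over $\oplus$ through the meet by Proposition 2.5(3), so
\[
d(x\oplus y)=d(1)\odot(x\oplus y)=d(1)\wedge(x\oplus y)=(d(1)\wedge x)\oplus(d(1)\wedge y)=d(x)\oplus d(y),
\]
the last step being $(2)$ again together with Proposition 2.3. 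Hence $d$ is an additive derivation, and the cycle is complete.

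The only subtle point — the ``hard part,'' modest as it is — is recognizing why $d(L)\subseteq B(L)$ cannot be dropped: condition $(3)$ alone says $d(x)\odot y=x\odot d(y)=d(x\odot y)$, and to convert this into the derivation identity $d(x\odot y)=(d(x)\odot y)\oplus(x\odot d(y))$ one needs precisely $d(x\odot y)\oplus d(x\odot y)=d(x\odot y)$, i.e.\ $d(x\odot y)\in B(L)$; likewise the Boolean hypothesis on $d(1)$ is exactly what makes the meet-over-$\oplus$ distributivity available in the additivity computation. Once these two observations are in place, the remaining manipulations are purely routine.
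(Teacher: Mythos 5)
Your proposal is correct and follows essentially the same route as the paper: $(1)\Rightarrow(2)$ via Proposition 3.8(2), $(2)\Rightarrow(3)$ by associativity of $\odot$, and $(3)\Rightarrow(1)$ by setting $y=1$ and using idempotency of $d(x\odot y)$ to recover the derivation identity. The only cosmetic difference is that where the paper cites Theorem 3.10 $(4)\Rightarrow(1)$ to conclude additivity, you spell out that computation directly (and in fact cite Proposition 2.5(3) more accurately than the paper's own reference to 2.5(4)).
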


\begin{proof} $(1)\Rightarrow (2)$  It follows from Proposition 3.8(2).

$(2)\Rightarrow (3)$ The proof of that is similar to Theorem 3.10 $(4)\Rightarrow (5)$.

$(3)\Rightarrow (1)$ Taking $y=1$ in (3), we have $d(x)=d(1)\odot x$. From $d(L)\subseteq B(L)$, we have $d(x\odot y)=d(x\odot y)\oplus d(x\odot y)=(d(x)\odot y)\oplus (x\odot d(y))$, that is, $d$ is a derivation on $L$. Moreover, from Theorem 3.10 $(1)\Leftrightarrow (4)$, we obtain that $d$ is an additive derivation on $L$.
\end{proof}

\begin{corollary}\emph{Let $L$ be a Boolean algebra and $d:L\rightarrow L$ be a map on $L$. Then the following statements are equivalent:}
\begin{enumerate}[(1)]
  \item \emph{$d$ is an additive derivation on $L$,
  \item $d(x)=d(1)\wedge x$,
  \item $d(x\wedge y)=d(x)\wedge y=x\wedge d(y)$.}
\end{enumerate}
\end{corollary}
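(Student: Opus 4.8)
The plan is to derive this directly from Theorem 3.14, since a Boolean algebra is precisely the special case of an MV-algebra in which every element is idempotent. First I would recall from the preliminaries that Boolean algebras are exactly the MV-algebras satisfying $x\odot x=x$ (equivalently $x\oplus x=x$), so that $B(L)=\{a\in L\mid a\odot a=a\}=L$. Consequently, for \emph{any} map $d:L\rightarrow L$ the hypothesis $d(L)\subseteq B(L)$ of Theorem 3.14 holds automatically, with no restriction on $d$ whatsoever.

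Next I would observe that in a Boolean algebra the monoid operation $\odot$ coincides with the lattice meet $\wedge$: by Proposition 2.6 (the characterization of $B(L)$), every $x\in B(L)$ satisfies $x\odot y=x\wedge y$ for all $y\in L$, and since $B(L)=L$ here, this gives $x\odot y=x\wedge y$ for all $x,y\in L$. Under this identification, condition (2) of Theorem 3.14, namely $d(x)=d(1)\odot x$, becomes $d(x)=d(1)\wedge x$, and condition (3), namely $d(x\odot y)=d(x)\odot y=x\odot d(y)$, becomes $d(x\wedge y)=d(x)\wedge y=x\wedge d(y)$. The notion of (additive) derivation in condition (1) is unchanged, but I would note that the defining identity $d(x\odot y)=(d(x)\odot y)\oplus(x\odot d(y))$ also rewrites in purely lattice-theoretic terms as $d(x\wedge y)=(d(x)\wedge y)\vee(x\wedge d(y))$, using that $\oplus$ restricted to $B(L)=L$ is $\vee$.

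Having made these translations, the three statements of the corollary are literally the three statements of Theorem 3.14 specialized to $L$ Boolean, and the equivalence follows at once. There is no real obstacle here; the only point requiring a word of care is the explicit verification that $\odot=\wedge$ and $\oplus=\vee$ on all of $L$, which is exactly where the Boolean hypothesis is used, and which reduces to Proposition 2.6 together with $B(L)=L$.

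\begin{proof}
Since $L$ is a Boolean algebra, $x\odot x=x$ for every $x\in L$, so $B(L)=L$ and in particular $d(L)\subseteq B(L)$ for the given map $d$. Moreover, by Proposition 2.6 applied to every element of $B(L)=L$, we have $x\odot y=x\wedge y$ for all $x,y\in L$. Substituting this identity into statements (2) and (3) of Theorem 3.14 yields statements (2) and (3) of the present corollary, while statement (1) is unchanged. Hence the equivalence of (1), (2), (3) here is an immediate consequence of Theorem 3.14.
\end{proof}
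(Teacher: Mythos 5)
Your proposal is correct and follows exactly the paper's route: the paper's entire proof of this corollary is ``It follows from Theorem 3.14,'' and you have simply supplied the (correct) details, namely that $B(L)=L$ in a Boolean algebra makes the hypothesis $d(L)\subseteq B(L)$ automatic and that $\odot$ coincides with $\wedge$ throughout. The only nitpick is a citation label: the characterization of $B(L)$ you invoke is Proposition 2.4 in the paper, not 2.6.
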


\begin{proof}It follows from Theorem 3.14.
\end{proof}

\begin{theorem}\emph{Let $L$ be an MV-algebra and $d$ be an additive derivation on $L$. Then the following statements are equivalent:}
\begin{enumerate}[(1)]
  \item \emph{$d$ is an identity map,
  \item $d(x^\ast)=(d(x))^\ast$,
  \item $d(1)=1$,
  \item $d$ is homorphism,
  \item $d$ is surjection,
  \item $d$ is one to one,
  \item $d$ is isomorphic.}
\end{enumerate}
\end{theorem}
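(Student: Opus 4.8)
The plan is to exploit the structural identity $d(x)=d(1)\odot x$, valid for every additive derivation by Proposition 3.8(2), which reduces most of the work to elementary computations with the single idempotent $d(1)\in B(L)$. I would organize the argument as a chain of implications anchored at statement (3), $d(1)=1$, since this is the most convenient hypothesis to work from, and then bounce the remaining conditions against it.

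First I would prove $(3)\Leftrightarrow(1)$: if $d(1)=1$, then $d(x)=d(1)\odot x=1\odot x=x$ for all $x$, so $d=id_L$; the converse is immediate. Next, $(1)\Rightarrow(2)$ is trivial, and $(2)\Rightarrow(3)$ follows by setting $x=0$ in $d(x^\ast)=(d(x))^\ast$ and using $d(0)=0$ from Proposition 3.4(1), which yields $d(1)=d(0^\ast)=(d(0))^\ast=0^\ast=1$. This closes the equivalence of (1), (2), (3).

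For the remaining items, $(1)$ trivially implies each of (4), (5), (6), (7), since the identity map is a homomorphism, a surjection, an injection, and an isomorphism. For the reverse directions: $(4)\Rightarrow(3)$ because any MV-homomorphism preserves the constant $1=0^\ast$ (indeed $d(1)=d(0^\ast)=(d(0))^\ast=0^\ast=1$); $(5)\Rightarrow(1)$ is exactly Proposition 3.8(7); and $(7)\Rightarrow(5)$ since an isomorphism is in particular a surjection, so (7) reduces to the already-handled (5).

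The one slightly less mechanical step is $(6)\Rightarrow(1)$. Here I would invoke Proposition 3.8(3), which gives $d(d(x))=d(x)$ for every $x\in L$. Reading this equation as an equality of the values of $d$ at the two arguments $d(x)$ and $x$, injectivity of $d$ forces $d(x)=x$ for all $x$, i.e. $d=id_L$. I expect this to be the main (though still short) obstacle, as it is the only place where one must notice the right way to exploit injectivity; everything else is either trivial or a direct citation of the already-established Propositions 3.4 and 3.8. Assembling $(1)\Leftrightarrow(2)\Leftrightarrow(3)$, $(1)\Rightarrow(4)\Rightarrow(3)$, $(1)\Rightarrow(5)\Rightarrow(1)$, $(1)\Rightarrow(6)\Rightarrow(1)$, and $(1)\Rightarrow(7)\Rightarrow(5)$ then yields the equivalence of all seven statements.
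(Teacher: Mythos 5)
Your proof is correct and follows essentially the same route as the paper: every condition is funneled through $d(1)=1$ via the identity $d(x)=d(1)\odot x$, with $(5)\Rightarrow(1)$ reducing to Proposition 3.8 and $(4)$ handled by the fact that a homomorphism preserves $1=0^\ast$. The only divergence is in $(6)\Rightarrow(1)$, where you apply injectivity to the idempotence $d(d(x))=d(x)$ from Proposition 3.8(3), whereas the paper applies it to $d((d(1))^\ast)=d(1)\odot(d(1))^\ast=0=d(0)$ to conclude $(d(1))^\ast=0$; both are equally valid one-line arguments.
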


\begin{proof} $(1)\Rightarrow (2)$ It is straightforward.

$(2)\Rightarrow (3)$ From Proposition 3.4(1), we have $d(1)=d(0^\ast)=(d(0))^\ast=0^\ast=1$.

$(3)\Rightarrow (1)$ From Proposition 3.8(2), we have $d(x)=d(1)\odot x=1\odot x=x$.

$(2)\Rightarrow (4)$ From Proposition 3.4(1) and Theorem 3.10(8), we obtain that $d$ is homorphism.

$(4)\Rightarrow (2)$ It is straightforward.

$(1)\Rightarrow (5)$ It is straightforward.

$(5)\Rightarrow (1)$ Suppose that $d$ is surjection, then there exists $x\in L$ such that $d(x)=1$. From Proposition 3.4(4), we have $1=d(x)\leq x$ and hence $x=1$, that is, $d(1)=1$. Further from $(3)\Rightarrow (1)$, we get that $d$ is an identity map.

$(1)\Rightarrow (6)$ It is straightforward.

$(6)\Rightarrow (1)$ Suppose that $d$ is one to one, by Proposition 3.8(2), we have $d((d(1))^\ast)=d(1)\odot (d(1))^\ast=0$ and hence $d(1)=1$.  hence $x=1$, that is, $d(1)=1$. From $(3)\Rightarrow (1)$, we get that $d$ is an identity map. From $(3)\Rightarrow (1)$, we get that $d$ is an identity map.

$(1)\Leftrightarrow (7)$ It follows from $(1)\Leftrightarrow (4)$, $(1)\Leftrightarrow (5)$, $(1)\Leftrightarrow (6)$.

\end{proof}

In what follows, we discuss the relationship between ideals and derivations on MV-algebras.\\

The following example shows that the fixed point set Fix$_d(L)$ of a derivation $d$ is not an ideal of an MV-algebra $L$.

\begin{example}\emph{Considering $S_3=\{0,\frac{1}{2},1\}$ in Example 2.2 and defining a map $d:S_3\longrightarrow S_3$ by $d(0)=d(1)=0$, $d(\frac{1}{2})=\frac{1}{2}$. One can check that $d$ is a derivation, while it is not an additive derivation, and Fix$_d(L)=\{0,\frac{1}{2}\}$ is not an ideal of $L$ since $\frac{1}{2}\oplus \frac{1}{2}=1\notin$ Fix$_d(L)$.}
\end{example}

However, if the conditions are strengthened, we can obtain the following result.

\begin{proposition}\emph{Let $(L,\oplus,\ast,0,1)$ be an MV-algebra and $d$ be an additive derivation on $L$. Then Fix$_d(L)$ is an ideal of $L$.}
\end{proposition}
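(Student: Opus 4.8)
The plan is to identify $Fix_d(L)$ explicitly as the principal down-set determined by the idempotent $d(1)$, and then to check the two defining conditions of an ideal directly.

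First I would recall that, since $d$ is additive, Proposition 3.8 gives $d(x)=d(1)\odot x$ for all $x\in L$, that $d(1)\in B(L)$, and that $Fix_d(L)=d(L)$. Because $d(1)\in B(L)$, Proposition 2.4(3) allows us to replace $d(1)\odot z$ by $d(1)\wedge z$ for every $z\in L$. Hence $x\in Fix_d(L)$ if and only if $d(1)\wedge x=x$, i.e. $x\le d(1)$; that is, $Fix_d(L)=\{x\in L|x\le d(1)\}=(d(1)]$, the principal lattice ideal of the underlying lattice $(L,\wedge,\vee)$ generated by $d(1)$.

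With this description in hand, the two ideal axioms are immediate. Downward closure is trivial: if $x\le y$ and $y\le d(1)$, then $x\le d(1)$, so $x\in Fix_d(L)$; and since $0=d(0)\le d(1)$ the set is nonempty. Closure under $\oplus$ follows directly from additivity: for $x,y\in Fix_d(L)$ we have $d(x\oplus y)=d(x)\oplus d(y)=x\oplus y$, hence $x\oplus y\in Fix_d(L)$. (Equivalently, from $x,y\le d(1)$ together with idempotency of $d(1)$ one gets $x\oplus y\le d(1)\oplus d(1)=d(1)$.) This establishes that $Fix_d(L)$ is an ideal of $L$.

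There is essentially no obstacle here: everything rests on results already proved. The only point that requires a moment's thought is recognising that the fixed-point set of an additive derivation coincides with the down-set of the Boolean element $d(1)$; once that is seen, closure under $\oplus$ reduces to $d(1)\oplus d(1)=d(1)$ and downward closure is automatic. It is worth noting that this also exhibits $Fix_d(L)$ as a principal lattice ideal generated by an element of $B(L)$, which fits naturally with the direct-product decomposition by Boolean elements studied in the next section.
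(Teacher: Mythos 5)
Your proof is correct, and it takes a cleaner route than the paper's on the only nontrivial point. The paper establishes downward closure by a direct computation with the derivation identity: for $x\leq y$ and $y\in Fix_d(L)$ it writes $x\wedge y=(x\oplus y^\ast)\odot y$, expands $d((x\oplus y^\ast)\odot y)$ via Definition 3.1 to get $d(x)\geq x$, and combines this with $d(x)\leq x$. You instead observe that $d(x)=d(1)\odot x=d(1)\wedge x$ (using $d(1)\in B(L)$ and Proposition 2.4(3)), so that $Fix_d(L)$ is exactly the principal lattice ideal $(d(1)]=\{x\in L\mid x\leq d(1)\}$; downward closure is then automatic, and $\oplus$-closure follows either from additivity (as in the paper) or from $d(1)\oplus d(1)=d(1)$. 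Your argument buys a sharper conclusion -- not merely that $Fix_d(L)$ is an ideal, but that it is the principal down-set of the Boolean element $d(1)$, which indeed anticipates Theorem 4.20 and the direct-product decomposition of Section 4 -- at the cost of leaning on Proposition 3.8(2) and 3.4(2) rather than only on the defining identity of a derivation. Both arguments are valid; yours is the more structural one.
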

\begin{proof} First, from Proposition 3.4(1), we have $d(0)=0$ and hence Fix$_d(L)$ is not a non-empty subset of $L$. Next, we will prove that Fix$_d(L)$ is a downset of $L$. Let $x\leq y$ and $y\in Fix_d(L)$. Then we have $d(x)=d(x\wedge y)=d((x\oplus y^\ast)\odot y)=(d(x\oplus y^\ast)\odot y)\oplus((x\oplus y^\ast)\odot d(y))=(d(x\oplus y^\ast)\oplus x$ and hence $x\leq d(x)\leq x$. Thus, $d(x)=x$, that is, Fix$_d(L)$ is a downset of $L$. Finally, from the definition of additive derivation, we have Fix$_d(L)$ is closed under the operation $\oplus$. Therefore, Fix$_d(L)$ is an ideal of $L$.
\end{proof}

The following is a counterexample showing that the converse of Proposition 3.18 may not hold.

\begin{example}\emph{Considering the Example 3.6, one can check that Fix$_d(L)=\{0\}$ is an ideal of $L$. However, $d$ is not an additive derivation on $L$.}
\end{example}

Inspired by Proposition 3.18, it is natural to ask that whether there exists an additive derivation $d$ such that Fix$_d(L)=I$ for given ideal $I$ in an MV-algebra $L$.\\

For the above question, we give the positive answer under certain conditions.


\begin{proposition} \emph{Let $L$ be a Boolean algebra and $I$ be a non-void prime ideal of $L$. Then there exists an additive derivation $d$ such that Fix$_d(L)=I$. Indeed, a map $d:L\longrightarrow L$ that is defined by: for all $t,x\in L$,}
\begin{center}
$d(x)=
\begin{cases}
x, & x\in I \\
x\wedge t, & x\in L/I,
\end{cases}$
\end{center}

\emph{is an additive derivation satisfying Fix$_d(L)=I$.}
\end{proposition}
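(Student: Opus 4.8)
The plan is to realize the claimed $d$ as the single map $x\mapsto t\wedge x$ for a suitably chosen parameter $t$, and then obtain the additive‑derivation property almost for free from Corollary 3.16. First I would observe that the piecewise formula in the statement only makes sense once $I$ has a largest element $t$, i.e. $I=(t]=\downarrow t$ (this holds automatically when $L$ is finite, and is the case the formula implicitly refers to); this $t$ will be the value $d(1)$. With $t$ fixed I would check that the two branches of the definition agree with $d(x)=t\wedge x$ for every $x\in L$: if $x\in I=\downarrow t$ then $x\le t$, so $t\wedge x=x$; if $x\notin I$ then $x\not\le t$, so $t\wedge x\ne x$, which is consistent with the prescription $d(x)=x\wedge t$ on that branch. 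Thus $d(x)=t\wedge x$ uniformly.

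Next I would verify that $d(x)=t\wedge x$ is an additive derivation on $L$. Since $d(1)=t\wedge 1=t$ and every element of a Boolean algebra is idempotent, the implication $(2)\Rightarrow(1)$ of Corollary 3.16 applies directly and gives that $d$ is an additive derivation. Alternatively, for a self‑contained argument one can verify the defining identity $d(x\wedge y)=(d(x)\wedge y)\vee(x\wedge d(y))$ and the additivity $d(x\vee y)=d(x)\vee d(y)$ straight from the piecewise formula by a case split on the membership of $x$ and $y$ in $I$; the only nontrivial case is $x,y\notin I$, and this is precisely where primeness of $I$ is used, since it guarantees $x\wedge y\notin I$ so that $d$ is evaluated on $x\wedge y$ through the second branch, after which both sides collapse to $x\wedge y\wedge t$.

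Finally I would compute the fixed‑point set. By Proposition 3.8(6), $\mathrm{Fix}_d(L)=d(L)=\{t\wedge x\mid x\in L\}$, and since $t\wedge t=t$ this image is exactly $\downarrow t=(t]=I$; equivalently, directly, $d(x)=x$ iff $x\wedge t=x$ iff $x\le t$ iff $x\in I$, for $x$ in either branch. Hence $\mathrm{Fix}_d(L)=I$, which completes the proof.

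The main obstacle is really the very first step: identifying the parameter $t$. The formula presupposes that the prime ideal $I$ is principal, and this is essential — for an additive derivation one always has $\mathrm{Fix}_d(L)=d(L)=\downarrow d(1)$, which is a principal lattice ideal, so no additive derivation can have a non‑principal fixed‑point set. Thus the natural and only workable reading of the statement is that $I=(t]$ for some $t\in L$ (e.g. $L$ finite), and once that $t$ is in hand the remaining verifications are routine case analysis or a one‑line appeal to Corollary 3.16.
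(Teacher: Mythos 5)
The paper states this proposition without any proof (the proof environment that presumably once followed it has been removed), so there is no argument of the authors' to compare yours against; judged on its own, your proof is correct and is essentially the argument the surrounding results are designed to deliver. Your opening observation is the crucial one: the parameter $t$ only makes the statement true when it is the greatest element of $I$, i.e. $I=(t]=\downarrow t$, in which case the two branches of the piecewise definition collapse to the single formula $d(x)=t\wedge x=d_t(x)$; and since every additive derivation on a Boolean algebra has the form $d(x)=d(1)\wedge x$, its fixed point set is the principal ideal $\downarrow d(1)$, so the proposition genuinely fails for non-principal prime ideals --- your remark to this effect is a correct and worthwhile repair of the statement (and is consistent with the open problem the authors pose immediately afterwards). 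The remaining steps --- invoking the Boolean-algebra characterization of additive derivations with $d(1)=t$, and computing $\mathrm{Fix}_d(L)=d(L)=\downarrow t=I$ via Proposition 3.8(6) or directly --- are all sound. Two small quibbles: the corollary you appeal to is numbered 3.15 in the paper, not 3.16; and in your alternative case-by-case verification, primeness of $I$ is not actually needed once $I=\downarrow t$ is fixed, because even if $x\wedge y$ were to land in $I$ for $x,y\notin I$, both branches return the same value $t\wedge x\wedge y$ --- primeness only restricts which principal ideals can occur (those generated by complements of atoms) and does no work in the computation.
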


In fact, the above proposition shows under some suitable conditions, there exists an additive derivation on an MV-algebra $L$ such that Fix$_d(L)=I$ for given ideal $I$ of $L$. Furthermore, we have the following {\bf open problem:}
\begin{enumerate}[(1)]
 \item For any ideal $I$, which is not an lattice ideal, of a general MV-algebra $L$, whether there exists an additive derivation $d$ such that Fix$_d(L)=I$.
\end{enumerate}

\section{Boolean derivation and their applications}

In this sections, we investigate Boolean additive derivations and their adjoint  derivations. In particular, we prove that  every MV-algebras are isomorphic to the direct product of  the fixed point set of Boolean additive derivations and that of their adjoint derivations. Finally, we show that the structure of a Boolean algebra is completely determined by its set of all Boolean additive (implicative) derivations.\\

In what follows, let $L$ be an MV-algebra and $a\in L$, we define four maps as follows:

\begin{enumerate}[(1)]
  \item $d_a:L\rightarrow L$ such that $d_a(x)=x\odot a$, for all $x\in L$,
  \item $d_{a^*}:L\rightarrow L$ such that $d_{a^*}(x)=x\ominus a$, for all $x\in L$,
  \item $g_a:L\rightarrow L$ such that $g_a(x)=a\oplus x$, for all $x\in L$,
  \item $g_{a^*}:L\rightarrow L$ such that $g_{a^*}(x)=a\rightarrow x$, for all $x\in L$.
\end{enumerate}

\begin{theorem}\emph{Let $L$ be an MV-algebra. Then the following statements are equivalent:}
\begin{enumerate}[(1)]
  \item \emph{$L$ is a Boolean algebra,
  \item $d_a$ is an additive derivation on $L$, for all $a\in L$,
  \item $d_{a^*}$ is an additive derivation on $L$, for all $a\in L$.}
\end{enumerate}
\end{theorem}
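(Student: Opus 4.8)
The plan is to prove the cycle of implications $(1)\Rightarrow(2)$, $(1)\Rightarrow(3)$, $(2)\Rightarrow(1)$ and $(3)\Rightarrow(1)$. Since the maps $d_a$ and $d_{a^\ast}$ are exchanged by the involution $\ast$ — indeed $d_{a^\ast}(x)=x\ominus a=x\odot a^\ast$ — the two implications issuing from (1) are completely parallel, and so are the two implications into (1); thus only two essentially different arguments have to be carried out.

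For $(1)\Rightarrow(2)$ and $(1)\Rightarrow(3)$ I would start from the observation that in a Boolean algebra every element is idempotent, so by Proposition 2.4 the operation $\odot$ collapses to $\wedge$; consequently $d_a(x)=x\odot a=x\wedge a$ and $d_{a^\ast}(x)=x\odot a^\ast=x\wedge a^\ast$ for all $x\in L$. In particular $d_a(1)=a$ and $d_{a^\ast}(1)=a^\ast$, hence $d_a(x)=d_a(1)\wedge x$ and $d_{a^\ast}(x)=d_{a^\ast}(1)\wedge x$. Now Corollary 3.15 (the implication $(2)\Rightarrow(1)$ there), or equivalently Theorem 3.14 with its hypothesis $d(L)\subseteq B(L)=L$ automatically satisfied, yields at once that $d_a$ and $d_{a^\ast}$ are additive derivations on $L$ for every $a\in L$.

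For $(2)\Rightarrow(1)$ and $(3)\Rightarrow(1)$ the key is to evaluate these maps at $1$. From $1\odot a=a$ we get $d_a(1)=a$, and from $1\ominus a=1\odot a^\ast=a^\ast$ we get $d_{a^\ast}(1)=a^\ast$. If $d_a$ is an additive derivation — in particular a derivation — then Proposition 3.4(2) forces $d_a(1)=a\in B(L)$; since $a\in L$ is arbitrary, $L\subseteq B(L)$, i.e. $L=B(L)$, so $L$ is a Boolean algebra. Symmetrically, if every $d_{a^\ast}$ is an additive derivation, then $a^\ast\in B(L)$ for all $a\in L$, and since $B(L)$ is a subalgebra it is closed under $\ast$, so $a=(a^\ast)^\ast\in B(L)$ for every $a$; thus again $L=B(L)$.

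I do not expect a genuine obstacle here: the theorem is entirely powered by Theorem 3.14/Corollary 3.15 in one direction and by Proposition 3.4(2) in the other, glued together by the trivial identities $1\odot a=a$ and $1\odot a^\ast=a^\ast$. The only points deserving a moment's care are, on the one hand, checking that the Boolean hypothesis really does give $x\odot a=x\wedge a$ and $x\ominus a=x\wedge a^\ast$ so that $d_a,d_{a^\ast}$ acquire the form $d(1)\odot x$ required to invoke Corollary 3.15, and, on the other hand, noticing that for the converses one needs only that $d_a$ (resp.\ $d_{a^\ast}$) is a derivation — additivity is not actually used there.
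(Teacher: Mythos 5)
Your proposal is correct and follows essentially the same route as the paper: the forward implications via Corollary 3.15 after noting $d_a(x)=x\wedge a=d_a(1)\wedge x$ in the Boolean case, and the converses by evaluating at $1$ to force $a\in B(L)$ (the paper unfolds this as $a=d_a(1\odot 1)=a\oplus a$ rather than citing Proposition 3.4(2), but that is the same computation). Your observation that only the derivation property, not additivity, is needed for the converse is a minor sharpening consistent with the paper's Remark 4.2.
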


\begin{proof}$(1)\Rightarrow (2)$ If $L$ is a Boolean algebra, then we get that $x\oplus y=x\vee y$ for any $x,y\in L$. Further by Corollary 3.15, we have $d_a$ is an additive derivation on $L$.

$(2)\Rightarrow (1)$ Assume that $d_a$ is an additive derivation on $L$. From Definition 3.1, we have $a=d_a(1)=d_a(1\odot 1)=(d_a(1)\odot 1)\oplus (1\odot d_a(1))=a\oplus a$ for any $a\in L$ and hence $L\subseteq B(L)$. Therefore, $L$ is a Boolean algebra.

$(1)\Leftrightarrow (3)$ The proof of $(1)\Leftrightarrow (3)$ is similar to that of $(1)\Leftrightarrow (2)$.

\end{proof}

\begin{remark} \emph{ From Theorem 4.1, we obtain that $d_a$ is an additive derivation on an MV-algebra if and only if $a\in B(L)$. Based on the above consideration, we called $d_a$ a Boolean additive  derivation on an MV-algebra $L$.}
\end{remark}

Next, we will discuss the adjoint derivation of Boolean additive  derivations. First, we introduce new derivation on MV-algebras.

\begin{definition}\emph{Let $L$ be an MV-algebra. A map $g:L\rightarrow L$ is called an \emph{implication derivation} on $L$ if it preserves $\rightarrow$ and satisfies the following conditions: for any $x,y\in L$},
\begin{center}  $g(x\rightarrow y)=(g(x)\rightarrow y)\oplus (x\rightarrow g(y))$.
\end{center}
\end{definition}

\begin{example}\emph{Let $L=\{0,a,b,1\}$ be  a chain and operations $\oplus$ and $\ast$ be defined as follows}:\\
\begin{center}
\begin{tabular}{c|c c c c }
   $\oplus$ & $0$ & $a$ & $b$ & $1$\\
   \hline
   $0$ & $0$ & $a$ & $b$ &  $1$\\
   $a$ & $a$ & $a$ & $1$ &  $1$ \\
   $b$ & $b$ & $1$ & $b$ &  $1$ \\
   $1$ & $1$ & $1$ & $1$ &  $1$
 \end{tabular} {\qquad}
\begin{tabular}{c|c c c c}
   $\ast$ & $0$ & $a$ & $b$ &  $1$\\
   \hline
          & $1$ & $b$ & $a$ &  $0$ \\
   \end{tabular}
\end{center}
\emph{Then $(\{0,a,b,1\},\oplus,\ast,0)$ is an MV-algebra. Define a map $g:L\longrightarrow L$ by $g(0)=g(a)=a$, $g(b)=g(1)=1$. One can check that $g$ is an implicative derivation on $L$, while $g$ is not a homorphism on $L$, since $g(0)\neq 0$. }
\end{example}

\begin{theorem}\emph{Let $L$ be an MV-algebra. Then the following statements are equivalent:}
\begin{enumerate}[(1)]
  \item \emph{$L$ is a Boolean algebra,
  \item $g_{a^*}$ is an implicative derivation on $L$, for all $a\in L$,
  \item $g_a$ is an implicative derivation on $L$, for all $a\in L$ .}
\end{enumerate}
\end{theorem}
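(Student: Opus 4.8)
The plan is to establish the four implications $(1)\Rightarrow(2)$, $(2)\Rightarrow(1)$, $(1)\Rightarrow(3)$, $(3)\Rightarrow(1)$, mimicking the proof of Theorem 4.1 but with the Leibniz rule for $\odot$ replaced throughout by the one for $\rightarrow$ coming from Definition 4.3.

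For $(1)\Rightarrow(2)$: in a Boolean algebra one has $\oplus=\vee$, $\odot=\wedge$ and $x\rightarrow y=x^\ast\vee y$, so that $g_{a^\ast}(x)=a^\ast\vee x$. I would then verify the defining identity by a distributive-lattice calculation: $g_{a^\ast}(x\rightarrow y)=a^\ast\vee x^\ast\vee y$, whereas $(g_{a^\ast}(x)\rightarrow y)\oplus(x\rightarrow g_{a^\ast}(y))=\bigl((a\wedge x^\ast)\vee y\bigr)\vee\bigl(x^\ast\vee a^\ast\vee y\bigr)$, and the latter reduces to $x^\ast\vee a^\ast\vee y$ by the absorption law $(a\wedge x^\ast)\vee x^\ast=x^\ast$; the two sides coincide. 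The auxiliary clause of Definition 4.3 (that $g$ ``preserves $\rightarrow$'') is checked the same way, using in addition $a\vee a^\ast=1$ and distributivity, e.g. $(a\wedge x^\ast)\vee a^\ast=(a\vee a^\ast)\wedge(x^\ast\vee a^\ast)=x^\ast\vee a^\ast$. The implication $(1)\Rightarrow(3)$ is proved identically, now with $g_a(x)=a\vee x$, the roles of $a$ and $a^\ast$ being exchanged; the same absorption and distributivity identities finish it.

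For $(2)\Rightarrow(1)$: the essential point is that the hypothesis is quantified over all $a\in L$, so we may take $a=1$; then $g_{a^\ast}$ becomes the map $x\mapsto 1\rightarrow x=0\oplus x=x$, the identity $\mathrm{id}_L$. The identity of Definition 4.3 thus forces $x\rightarrow y=(x\rightarrow y)\oplus(x\rightarrow y)$ for all $x,y\in L$, i.e. $x\rightarrow y\in B(L)$; taking $x=0$ yields $y=y\oplus y$ for every $y$, whence $L=B(L)$ and $L$ is a Boolean algebra. (Equivalently, without specialising one can first expand the identity into $a^\ast\oplus x^\ast\oplus y=(a\odot x^\ast)\oplus x^\ast\oplus a^\ast\oplus y\oplus y$ for all $x,y$, then set $x=1$ to get $a^\ast\oplus y=a^\ast\oplus y\oplus y$, and finally $a=1$.) The implication $(3)\Rightarrow(1)$ is the mirror image: take $a=0$, so that $g_a$ becomes $x\mapsto 0\oplus x=x=\mathrm{id}_L$, and conclude $y=y\oplus y$ exactly as before.

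As for where the work lies: there is no genuine obstacle, the proof being mostly bookkeeping. The step that most needs care is the Boolean-case verification in $(1)\Rightarrow(2)$ and $(1)\Rightarrow(3)$, where one must handle both clauses of Definition 4.3 and keep the De Morgan laws and lattice absorptions straight; in the converse one must notice that it is precisely the ``for all $a$'' quantifier that collapses the problem to the innocuous observation that the identity map can be an implicative derivation only when $\rightarrow$ takes idempotent values, that is, only on a Boolean algebra.
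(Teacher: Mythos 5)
Your proposal is correct in substance and reaches the same conclusion, but by a noticeably different route from the paper's, and it contains one small slip that should be fixed. For $(1)\Rightarrow(2)$ the paper first proves the preservation clause $g_{a^\ast}(x\rightarrow y)=g_{a^\ast}(x)\rightarrow g_{a^\ast}(y)$ by a residuation argument (using $a\odot(a\rightarrow x)=a\wedge x$ and monotonicity of $\odot$ and $\rightarrow$), and only then verifies the Leibniz identity via Proposition 2.5; you instead translate everything into the Boolean lattice operations and check both clauses by absorption, distributivity and De Morgan, which is simpler and entirely adequate once $(1)$ is assumed. For the converse the paper fixes an arbitrary $a$ and exploits the preservation clause with $x=a$, $y=a\odot a$ to obtain $a=a\odot a$ directly, whereas you exploit the quantifier over $a$ to reduce to $a=1$, so that $g_{a^\ast}=\mathrm{id}_L$, and then use the Leibniz clause; both are legitimate, and your version isolates nicely why the ``for all $a$'' hypothesis matters. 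The slip: after obtaining $x\rightarrow y=(x\rightarrow y)\oplus(x\rightarrow y)$ you write that ``taking $x=0$ yields $y=y\oplus y$,'' but $0\rightarrow y=0^\ast\oplus y=1$, so $x=0$ gives only the vacuous identity $1=1\oplus 1$; you must take $x=1$, since $1\rightarrow y=y$ --- exactly the substitution you use in your parenthetical alternative, so the argument is saved, but the main text should read $x=1$. The same correction applies to the mirrored step in $(3)\Rightarrow(1)$, where after reducing to $g_0=\mathrm{id}_L$ you again need $x=1$ to extract $y=y\oplus y$.
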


\begin{proof} $(1)\Rightarrow (2)$ Let $L$ be a Boolean algebra and $a\in L$. First, we will prove that $g_{a^*}(x\rightarrow y)=g_{a^*}(x)\rightarrow g_{a^*}(y)$. In particular, since $(a\odot
 x\rightarrow y)\odot a\odot(a\rightarrow x)\leq (a\odot x\rightarrow y)\odot
 a\odot x\leq y$, we have $a\odot x\rightarrow y\leq a\odot(a\rightarrow x)\rightarrow y$ and hence
 $g_{a^\ast}(x\rightarrow y)=a\rightarrow(x\rightarrow y)=x\rightarrow(a\rightarrow y)=a\odot x\rightarrow y\leq
 a\odot(a\rightarrow x)\rightarrow y=(a\rightarrow x)\rightarrow(a\rightarrow y)=g_{a^\ast}(x)\rightarrow g_{a^\ast}(y)$. On the other hand, since
 $x\leq a\rightarrow x$, it easy to show that $a\odot x\leq a\odot(a\rightarrow x)$ and hence that $a\odot(a\rightarrow x)\rightarrow y\leq a
 \odot x\rightarrow y$, that is, $g_{a^*}(x)\rightarrow g_{a^*}(y)\leq g_{a^*}(x\rightarrow y)$. Thus, we have
 $g_{a^*}(x\rightarrow y)=g_{a^*}(x)\rightarrow g_{a^*}(y)$.

 Next, from Proposition 2.5, we have $g_{a^*}(x\rightarrow y)=a\oplus (x\rightarrow y)=x^\ast\oplus g_{a^*}(y)=((g_{a^*}(x))\vee x^\ast)\oplus (g_a(y)\vee y)=((g_a(x))^\ast\oplus y)\oplus (x^\ast\oplus g_a(y))=(g_{a^*}(x)\rightarrow y)\oplus(x\rightarrow g_{a^*}(y)$.

 Combing them, one can see that $g_{a^*}$ is an implicative derivation on $L$.

 $(2)\Rightarrow (1) $ Assume that $g_{a^*}$ is an implicative derivation on $L$, for any $a\in L$. Then $g_{a^*}(x\rightarrow y)=g_{a^*}(x)\rightarrow g_{a^*}(y)$ holds for all $x,y\in L$,
 then taking $x=a$ and $y=a\odot a$ we have $a\rightarrow(a\rightarrow a\odot a)=(a\rightarrow a)\rightarrow(a\rightarrow
 a\odot a)$. Since $a\rightarrow(a\rightarrow a\odot a)=a\odot a\rightarrow a\odot a=1$, we have
 $1=1\rightarrow (a\rightarrow a\odot a)=a\rightarrow a\odot a$. This implies that $a\leq a\odot a$ and hence that $a=a\odot a$, that is $L$ is a Boolean algebra.

 $(1)\Leftrightarrow (3)$ The proof of $(1)\Leftrightarrow (3)$ is similar to that of $(1)\Leftrightarrow (2)$.
\end{proof}

\begin{remark} \emph{ From Theorem 4.5, we obtain that $g_a$ is an implication derivation on an MV-algebra if and only if $a\in B(L)$. Based on the above consideration, we called $g_a$ a Boolean implication  derivation on an MV-algebra $L$.}
\end{remark}

\begin{definition}\emph{ Let $L$ be an MV-algebra and $\mu_a$ be a Boolean additive derivation on $L$. The Boolean additive derivation $\mu_a$ is called residuated if there exists a Boolean  implicative derivation $\nu_a$ on $L$ such that a pair $(\mu_a,\nu_a)$ forms a Galois connection.}
\end{definition}

The Boolean implicative derivation $\nu_a$ is called the adjoint derivation of the Boolean additive derivation $\mu_a$. \\

From the notion of Galois connections, we have that if the Boolean additive derivation $\mu_a$ is residuated, then $\mu_a$ and it's  adjoint derivation must be isotone. In particular, if the Boolean additive  derivation $\mu_a$ has the adjoint derivation $\nu_a$, then the adjoint of $\mu_a$ unique. Therefore, we shall denote this unique $\nu_a$ by $\mu^{\ast}_a$.

\begin{example}\emph{Consider the MV-algebra in Example 4.7. Define two maps $\nu_b(0)=\nu_b(b)=b$, $\nu_b(a)=\nu_b(1)=1$, $\mu_b(0)=\mu_b(a)=0$, $\mu_b(b) =\mu_b(1)=b$, respectively. One can check that $\nu_b$ and $\mu_b$ are Boolean implicative derivation and Boolean additive derivation on $L$, respectively. Further, one can check that $(\nu_b,\mu_b)$ froms a Galois connection.}
\end{example}

\begin{theorem}\emph{ Let $L$ be an MV-algebra and $a\in B(L)$. The the map $g_a(g_a^{\ast})$ is the adjoint derivation of the Boolean additive derivation $d^{\ast}_a(d^{\ast}_{a^{\ast}})$ on $L$, that is, $d^{\ast}_a=g_{a^{\ast}}(d^{\ast}_{a^{\ast}}=g_{a})$.}
\end{theorem}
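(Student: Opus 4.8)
The plan is to read the theorem straight off Definition 2.6: I want to show that the pair $(g_{a^{\ast}},d_a)$ and the pair $(g_a,d_{a^{\ast}})$ are Galois connections on the poset $(L,\leq)$, whence by the uniqueness of the adjoint derivation noted after Definition 4.8 one obtains $d_a^{\ast}=g_{a^{\ast}}$ and $d_{a^{\ast}}^{\ast}=g_a$. The key observation is that these two pairs are nothing other than the residuation adjunction of the monoid $(L,\odot,1)$ and its dual for $(L,\oplus,0)$: indeed $d_a(x)=x\odot a$ and $g_{a^{\ast}}(x)=a\rightarrow x$, while $d_{a^{\ast}}(x)=x\ominus a$ and $g_a(x)=a\oplus x$.

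Concretely, I would first invoke the MV-algebra residuation law $a\odot y\leq z\iff y\leq a\rightarrow z$. Taking $z=a\odot y$ gives $y\leq a\rightarrow(a\odot y)=g_{a^{\ast}}(d_a(y))$, i.e. $g_{a^{\ast}}\circ d_a\geq id_L$; taking $y=a\rightarrow z$ gives $d_a(g_{a^{\ast}}(z))=a\odot(a\rightarrow z)\leq z$, i.e. $d_a\circ g_{a^{\ast}}\leq id_L$. Thus $(g_{a^{\ast}},d_a)$ meets Definition 2.6, with $g_{a^{\ast}}$ in the role of $f$ and $d_a$ in the role of $g$ (the ordering used in Example 4.9). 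For the second pair I would rewrite $a\oplus y=a^{\ast}\rightarrow y$ and apply the same residuation law at $a^{\ast}$: $x\ominus a=x\odot a^{\ast}\leq y\iff x\leq a^{\ast}\rightarrow y=a\oplus y$, which at once yields $g_a\circ d_{a^{\ast}}\geq id_L$ and $d_{a^{\ast}}\circ g_a\leq id_L$, so $(g_a,d_{a^{\ast}})$ is a Galois connection too. If one prefers a direct check using $a\in B(L)$, the composites simply collapse, via Proposition 2.4, Proposition 2.3(1),(2) and distributivity, to $g_{a^{\ast}}(d_a(x))=a^{\ast}\vee x$, $d_a(g_{a^{\ast}}(x))=a\wedge x$, $g_a(d_{a^{\ast}}(x))=a\vee x$, $d_{a^{\ast}}(g_a(x))=a^{\ast}\wedge x$, giving the same four inequalities.

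It then remains to certify that all four maps are derivations of the required kind. Since $a\in B(L)$ forces $a^{\ast}\in B(L)$, Theorem 4.1 with Remark 4.2 gives that $d_a$ and $d_{a^{\ast}}$ are Boolean additive derivations on $L$, and Theorem 4.5 with Remark 4.6 gives that $g_a$ and $g_{a^{\ast}}$ are Boolean implicative derivations on $L$; moreover each map is order-preserving, $d_a,d_{a^{\ast}}$ by Proposition 2.3(7) and $g_a,g_{a^{\ast}}$ by monotonicity of $\oplus$, so the hypotheses of Definition 2.6 are satisfied. Combining this with the two Galois connections above and the uniqueness of adjoints, I would conclude that $g_{a^{\ast}}$ is the adjoint derivation of $d_a$ and $g_a$ is the adjoint derivation of $d_{a^{\ast}}$, that is, $d_a^{\ast}=g_{a^{\ast}}$ and $d_{a^{\ast}}^{\ast}=g_a$.

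I do not expect a substantive obstacle: the proof is essentially the standard residuation adjunction together with the bookkeeping of which earlier result turns each map into a derivation. The one point that genuinely needs care is orientation — keeping the implicative derivation as the first component and the additive derivation as the second, consistently with the labelling $f\colon E\to F$, $g\colon F\to E$ in Definition 2.6 (and with Example 4.9), so that the inequalities $fg\geq id_L$ and $gf\leq id_L$ come out pointing the right way.
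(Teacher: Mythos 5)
Your proposal is correct and follows essentially the same route as the paper: both rest on the residuation law $a\odot x\leq y\iff x\leq a\rightarrow y$, cite Theorems 4.1 and 4.5 to certify that $d_a,d_{a^\ast}$ and $g_a,g_{a^\ast}$ are Boolean additive and implicative derivations, check isotonicity, and conclude by uniqueness of the adjoint. The only cosmetic difference is that you verify the unit/counit inequalities $fg\geq id_L$, $gf\leq id_L$ of Definition 2.6 directly (and, commendably, get the orientation of the pair right, which the paper's own wording is careless about), while the paper quotes the equivalent biconditional form of the adjunction.
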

\begin{proof} First, from Theorems 4.1 and 4.5, we get $d_a$ and $g_{a^{\ast}}$ are Boolean additive derivation and Boolean implicative derivation on $L$, respectively. Moreover, for all $x,y\in L$, let $x\leq y$, we conclude from Proposition 2.3 that $g_{a^{\ast}}(x)=a\rightarrow x\leq a\rightarrow y=g_{a^{\ast}}(y)$, that is,  $g_{a^{\ast}}$ is isotone. For all $x,y\in L$, we have $d_a(x)=a\odot x\leq y$ if and only if $x\leq a\rightarrow y=g_{a^{\ast}}(y)$. Thus, $(d_a,g_{a^{\ast}})$ forms a Galois connection.
 Combining them, we obtain that $g_{a^{\ast}}$ is the adjoint derivation of $d_a$, that is, $d^{\ast}_a=g_{a^{\ast}}$. In the similar way, one can prove that $d^{\ast}_{a^{\ast}}=g_{a}$.
\end{proof}

In what follows, we denote by $Fix_{d_a}(L)$ the set of all fixed points for $d_a$ and $Fix_{g_a}(L)$ the set of all fixed points of $L$ for $g_a$, respectively.

\begin{theorem}\emph{ Let $L$ be an MV-algebra and $a\in B(L)$. Then $(Fix_{d_a}(L),\oplus,\neg_1,0,a)$ is an MV-algebra, where $\neg_1 x=d_a(x^\ast)$, for all $x\in L$.}
\end{theorem}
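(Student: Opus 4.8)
The plan is to show that $Fix_{d_a}(L)$ is exactly the principal lattice ideal $(a] = \downarrow a = \{x \in L \mid x \leq a\}$, and then to verify the MV-algebra axioms directly on this set, using $a$ in place of the top element $1$. Since $a \in B(L)$, the operation $d_a(x) = x \odot a = x \wedge a$; hence $d_a(x) = x$ if and only if $x \leq a$, which gives $Fix_{d_a}(L) = \downarrow a$. Closure under $\oplus$ follows because $d_a$ is an additive derivation (Theorem 4.1), so $x, y \leq a$ implies $x \oplus y = d_a(x) \oplus d_a(y) = d_a(x \oplus y) \leq a$; one can also see this directly since $x \oplus y \leq a \oplus a = a$. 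The element $0$ lies in the set and is the neutral element for $\oplus$, so $(Fix_{d_a}(L), \oplus, 0)$ is a commutative monoid.

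Next I would check that $\neg_1$ is a well-defined unary operation on $Fix_{d_a}(L)$, i.e. that $d_a(x^\ast) \leq a$ for every $x$; but $d_a(x^\ast) = x^\ast \wedge a \leq a$, so this is immediate, and moreover $\neg_1 x = x^\ast \odot a = x^\ast \wedge a = a \ominus x$ for $x \in B(L)$-relative computations. The key computational identities are the analogues of (MV2), (MV3), (MV4) with the constant $a$ playing the role of $1$. For (MV2), $\neg_1 \neg_1 x = a \wedge (a \wedge x^\ast)^\ast = a \wedge (a^\ast \vee x) = (a \wedge a^\ast) \vee (a \wedge x) = 0 \vee (a \wedge x) = a \wedge x = x$ for $x \leq a$, where I use $a \in B(L)$ (so $a \odot a^\ast = 0$ and the distributivity of Proposition 2.6). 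For (MV3), $x \oplus \neg_1 0 = x \oplus (a \wedge 1) = x \oplus a = x \vee a = a$ for $x \leq a$, using that $a \in B(L)$ makes $\oplus$ and $\vee$ coincide. For (MV4), I would expand $\neg_1(\neg_1 x \oplus y) \oplus y$ inside $L$ using $a \in B(L)$, reducing it to $(a \wedge (x^\ast \vee a^\ast \vee y^\ast)) \vee y$; since everything is bounded above by $a$ and $a$ is Boolean, this collapses to $x \vee y$ after distributing $a \wedge (-)$ and cancelling the $a^\ast$-term against $a$, and by symmetry the other side also equals $x \vee y$.

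An alternative and cleaner route is to observe that $\downarrow a$ with the inherited order is itself the unit interval of the $\ell$-group decomposition: since $a \in B(L)$, $L \cong \downarrow a \times \downarrow a^\ast$ as MV-algebras, and $\downarrow a$ with $\oplus$ restricted, negation $x \mapsto a \ominus x$, bottom $0$ and top $a$ is precisely the first factor; this is a standard fact that I could cite from Chang's work, but since the paper develops things concretely I expect the authors prefer the direct axiom check. Either way, the only subtlety is bookkeeping: one must consistently replace $1$ by $a$ and remember that $x^\ast$ in $L$ is not the negation of $Fix_{d_a}(L)$ — the correct negation is $\neg_1 x = x^\ast \wedge a$.

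The main obstacle is the verification of (MV4); it is the one axiom whose two sides are not symmetric and it involves a moderately long chain of rewrites mixing $\oplus$, $\odot$, $\vee$, $\wedge$ and $^\ast$. The trick that makes it manageable is to push the factor $a \wedge (-)$ (equivalently $a \odot (-)$) through every step using Proposition 2.6, after which all occurrences of $a^\ast$ annihilate against $a$ and the computation reduces to the ordinary (MV4) identity valid in $L$ together with the substitution of $a$ for the neutral-of-$\odot$. The remaining axioms (monoid laws, (MV2), (MV3)) are short, and the closure statements are one-liners given Theorem 4.1.
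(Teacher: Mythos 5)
Your proposal is correct and follows essentially the same route as the paper, whose proof of this theorem simply defers to the direct axiom-check of Theorem 3.12: closure of the fixed point set under $\oplus$ and $\neg_1$, then verification of (MV2)--(MV4) with $a$ in the role of $1$, using $a\in B(L)$ so that $x\odot a=x\wedge a$ and Proposition 2.6 lets $a\wedge(-)$ distribute through every step. Your added observation that $Fix_{d_a}(L)=\downarrow a$ cleanly streamlines the closure arguments; the only blemish is the displayed intermediate expression $(a\wedge(x^\ast\vee a^\ast\vee y^\ast))\vee y$ in the (MV4) sketch, which is not what the expansion actually gives, though the computation does close correctly to $\neg_1(\neg_1 x\oplus y)\oplus y=(x\odot y^\ast)\oplus y=x\vee y$, matching the paper.
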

\begin{proof}The proof is similar to that of Theorem 3.12.
\end{proof}

\begin{theorem}\emph{ Let $L$ be an MV-algebra and $a\in B(L)$. Then $(Fix_{g_{a}}(L),\oplus,\circ_1,a,1)$ is an MV-algebra, where $ x^{\circ_1}=g_{a}(x^\ast)$, for all $x\in L$.}
\end{theorem}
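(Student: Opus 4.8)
The plan is to verify the MV-algebra axioms directly on $Fix_{g_a}(L)$, as in the proof of Theorem~3.12 (and Theorem~4.11). Since $a\in B(L)$ we have $g_a(x)=a\oplus x=a\vee x$, so
$Fix_{g_a}(L)=\{x\in L:a\vee x=x\}=\{x\in L:a\le x\}={\uparrow}a$,
an up-set whose least element is $a$ and whose greatest element is $1$. This set is closed under $\oplus$ by isotonicity of $\oplus$ (if $x,y\ge a$ then $x\oplus y\ge y\ge a$), and closed under $x\mapsto x^{\circ_1}=g_a(x^\ast)=a\vee x^\ast$ since $a\vee x^\ast\ge a$. Associativity and commutativity of $\oplus$ are inherited from $L$, and $a\oplus x=a\vee x=x$ for all $x\ge a$, so $(Fix_{g_a}(L),\oplus,a)$ is a commutative monoid with neutral element $a$.

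Next I would dispatch (MV2) and (MV3). Note $a^\ast\in B(L)$, as $B(L)$ is a subalgebra closed under $\ast$, so $a^\ast\odot z=a^\ast\wedge z$ and $a\oplus z=a\vee z$ for every $z$ (Proposition~2.4). Using De~Morgan, these Boolean-center identities, distributivity of the lattice and $a\le x$,
\[
(x^{\circ_1})^{\circ_1}=a\oplus(a\oplus x^\ast)^\ast=a\oplus(a^\ast\odot x)=a\vee(a^\ast\wedge x)=(a\vee a^\ast)\wedge(a\vee x)=a\vee x=x,
\]
which is (MV2). For (MV3), the constant playing the role of $0$ in the new algebra is $a$, and $a^{\circ_1}=a\oplus a^\ast=a\vee a^\ast=1$, so the axiom reduces to $1\oplus x=1$, valid in $L$.

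The heart of the argument is (MV4). Expanding with the De~Morgan law for $\oplus$ and $a\oplus y=y$,
\[
(x^{\circ_1}\oplus y)^{\circ_1}\oplus y=a\oplus(a\oplus x^\ast\oplus y)^\ast\oplus y=y\oplus(a^\ast\odot x\odot y^\ast).
\]
Since $a^\ast\in B(L)$, rewrite $a^\ast\odot x\odot y^\ast=a^\ast\wedge(x\odot y^\ast)$; then by Proposition~2.3(11), the identity $x\vee y=(x\odot y^\ast)\oplus y$ from Section~2, and $y\oplus a^\ast=y\vee a^\ast$,
\[
y\oplus\big(a^\ast\wedge(x\odot y^\ast)\big)=(y\vee a^\ast)\wedge(x\vee y).
\]
Finally, distributivity of $(L,\wedge,\vee)$ together with the decomposition $x=x\wedge(a\vee a^\ast)=a\vee(a^\ast\wedge x)$ and $y=a\vee(a^\ast\wedge y)$ (valid because $x,y\ge a$) reduces the right-hand side to $a\vee\big(a^\ast\wedge(x\vee y)\big)$, which is symmetric in $x$ and $y$; hence it equals $(y^{\circ_1}\oplus x)^{\circ_1}\oplus x$, proving (MV4). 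The one delicate point is the bookkeeping: each product with $a^\ast$ must be converted into a meet at exactly the stage where the Boolean-center identities (Propositions~2.4, 2.5) and distributivity become applicable.

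Finally, I note an alternative route that sidesteps the (MV4) computation: the map $\varphi:Fix_{g_a}(L)\to Fix_{d_{a^\ast}}(L)$, $\varphi(x)=x\odot a^\ast=x\wedge a^\ast$, is a bijection onto ${\downarrow}a^\ast$ with inverse $y\mapsto y\oplus a$, and it carries $\oplus$ to $\oplus$ (by Proposition~2.5(3), $(x\oplus y)\wedge a^\ast=(x\wedge a^\ast)\oplus(y\wedge a^\ast)$), the operation $(\cdot)^{\circ_1}$ to the operation $\neg_1$ of Theorem~4.11, and the pair $(a,1)$ to $(0,a^\ast)$. Since $a^\ast\in B(L)$, Theorem~4.11 applied to $a^\ast$ says $(Fix_{d_{a^\ast}}(L),\oplus,\neg_1,0,a^\ast)$ is an MV-algebra, so pulling the axioms back along $\varphi$ shows $(Fix_{g_a}(L),\oplus,\circ_1,a,1)$ is one as well.
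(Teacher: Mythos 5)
Your proof is correct and follows essentially the same route as the paper's: identify $Fix_{g_a}(L)$ as the up-set of $a$, check closure under $\oplus$ and $\circ_1$, and verify (MV2)--(MV4) directly from the Boolean-centre identities and lattice distributivity. (Your value $x\vee y$ for the (MV4) expression is in fact the correct one --- the paper's own computation, which ends in $x\wedge y$, contains operation typos but still yields a symmetric expression --- and your alternative transport-of-structure argument via $x\mapsto x\wedge a^\ast$ is essentially Theorem 4.15 read backwards; it rests on the $Fix_{d_{a^\ast}}$ result rather than on the statement being proved, so it is not circular, though the result it invokes is Theorem 4.10 applied to $a^\ast$, not Theorem 4.11.)
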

\begin{proof} First, we prove that $(Fix_{g_{a}}(L),\oplus,a)$ is a commutative monoid. One can easy to check that $Fix_{g_{a}}(L)$ is closed under $\oplus$. It follows that $(Fix_{g_{a}}(L),\oplus)$ is commutative semigroup. For all $x\in Fix_{g_{a}}(L)$, we can obtain that $x\oplus a=x$, that is, $a$ is a unital element.

Next, we will prove that $(Fix_{g_{a}}(L)$ is closed under $\circ_1$. For all $x\in Fix_{g_{a}}(L)$, we have $g(\neg x)=g(g(x^\ast))=a\oplus a\oplus x^\ast=a\oplus x^\ast=g(x^\ast)=x^{\circ_1}=(g(x))^{\circ_1}$, that is, $Fix_{g_{a}}(L)$ is closed under $\circ_1$.

Finally, we will verify the remaining axioms of an MV-algebra.

(MV2) For all $x\in Fix_{g_{a}}(L)$, we have $ {x^{\circ_1}}^{\circ_1}=( x^{\circ_1}\odot (a^\ast)^\ast=((x\odot a)^\ast)\oplus a^\ast)^\ast=(x\odot a^\ast)\oplus a=x\vee a=d(x)\vee d(a)=d(x)=x$, that is, ${x^{\circ_1}}^{\circ_1}=x$.

(MV3) For all $x\in Fix_{g_{a}}(L)$, we have $x\oplus a^{\circ_1}=x\oplus 1=1$.

(MV4) For any $x,y\in Fix_{g_{a}}(L)$, we have $(x^{\circ_1}\oplus y)^{\circ_1}\oplus y=( x^{\circ_1}\odot y\odot a^\ast)^\ast\odot y=(x\odot a^\ast)^\ast\odot y)\odot a^\ast)^\ast\odot y=((x\odot a^\ast)\oplus a\oplus y^\ast)\odot y=((x\vee a)\oplus y^\ast)\odot y=x\wedge y$. In the similar way, one can prove that $(x^{\circ_1}\odot y^{\circ_1})^{\circ_1}\odot x=x\wedge y$, and hence $(x^{\circ_1}\oplus y)^{\circ_1}\oplus y=(x\oplus y^{\circ_1})^{\circ_1}\oplus x$.

Therefore, we obtain that $(Fix_{g_{a}}(L), \oplus, \circ_1, a,1)$ is an MV-algebra.
\end{proof}

\begin{corollary}\emph{ Let $L$ be an MV-algebra and $a\in B(L)$. Then $(Fix_{d_{a^{\ast}}}(L),\oplus,\neg_2,0,a^\ast)$ is an MV-algebra, where $\neg_2 x=d_{a^{\ast}}(x^\ast)$, for all $x\in L$.}
\end{corollary}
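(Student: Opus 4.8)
The plan is to obtain this corollary as an immediate instance of Theorem 4.10 via the substitution $a \mapsto a^{\ast}$. The starting observation is that $B(L)$ is a subalgebra of $L$ and hence is closed under the unary operation ${}^{\ast}$; thus $a \in B(L)$ forces $a^{\ast} \in B(L)$. Consequently the map $d_{a^{\ast}}$, given by $d_{a^{\ast}}(x) = x \ominus a = x \odot a^{\ast}$, is exactly the Boolean additive derivation $d_{b}$ attached to the Boolean element $b := a^{\ast}$; recall from Theorem 4.1 together with Remark 4.2 that $d_{b}$ is an additive derivation precisely when $b \in B(L)$, which is the case here.

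Now I would apply Theorem 4.10 (which asserts that $(Fix_{d_{a}}(L),\oplus,\neg_{1},0,a)$ is an MV-algebra with $\neg_{1}x = d_{a}(x^{\ast})$ whenever $a \in B(L)$) with $b = a^{\ast}$ in place of $a$. This yields that $(Fix_{d_{b}}(L),\oplus,\neg,0,b)$ is an MV-algebra, where $\neg x = d_{b}(x^{\ast})$. Unwinding the abbreviation $b = a^{\ast}$ gives $Fix_{d_{b}}(L) = Fix_{d_{a^{\ast}}}(L)$, the constant $b$ becomes $a^{\ast}$, and $\neg x = d_{a^{\ast}}(x^{\ast}) = \neg_{2}x$; moreover $d_{a^{\ast}}(1) = 1 \odot a^{\ast} = a^{\ast}$, so the constant $a^{\ast}$ is indeed the top unit of this fixed-point MV-algebra, in complete analogy with Theorem 4.10. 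This is precisely the statement of the corollary.

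Since Theorem 4.10 is already available, there is essentially no obstacle: the only point to verify, and it is immediate, is that the substitution $a \mapsto a^{\ast}$ is legitimate, i.e.\ that $a \in B(L)$ implies $a^{\ast} \in B(L)$. Should a self-contained argument be desired, one could instead replay the proof of Theorem 3.12 verbatim with $d = d_{a^{\ast}}$ — checking closure of $Fix_{d_{a^{\ast}}}(L)$ under $\oplus$ and under $\neg_{2}$, then verifying (MV2)--(MV4) using $d_{a^{\ast}}(x) = a^{\ast}\odot x$ together with the lattice identities of Theorem 3.10 and Proposition 3.4 — but invoking Theorem 4.10 makes this routine computation unnecessary.
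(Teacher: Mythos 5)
Your proposal is correct and matches the paper's approach, which simply defers to Theorem 4.10 (``the proof is similar to that of Theorem 4.10''). In fact your justification is tighter than the paper's: since $a\in B(L)$ implies $a^{\ast}\in B(L)$ and $d_{a^{\ast}}(x)=x\ominus a=x\odot a^{\ast}$, the corollary is a literal instance of Theorem 4.10 under the substitution $a\mapsto a^{\ast}$, so no separate ``similar'' argument is needed.
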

\begin{proof} The proof is similar to that of Theorem 4.10.
\end{proof}

\begin{corollary}\emph{ Let $L$ be an MV-algebra and $a\in B(L)$. Then $(Fix_{g_{a^{\ast}}}(L),\oplus,\circ_2,a^\ast,1)$ is an MV-algebra, where $ x^{\circ_2}=g_{a^{\ast}}(x^\ast)$, for all $x\in L$.}
\end{corollary}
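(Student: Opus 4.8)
The plan is to obtain this statement as an immediate specialization of Theorem 4.11 (the assertion that $(Fix_{g_a}(L),\oplus,\circ_1,a,1)$ is an MV-algebra for $a\in B(L)$), applied with the Boolean element $a$ replaced by its negation $a^\ast$. The only thing to record beforehand is that $a^\ast$ again belongs to $B(L)$: since $(B(L),\oplus,\ast,0)$ is a subalgebra of $L$, it is closed under $\ast$, so $a\in B(L)$ forces $a^\ast\in B(L)$. Hence Theorem 4.11 is applicable to $a^\ast$.

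Granting this, Theorem 4.11 applied to the Boolean element $a^\ast$ yields that $(Fix_{g_{a^\ast}}(L),\oplus,\circ,a^\ast,1)$ is an MV-algebra, where the unary operation is $x^{\circ}=g_{a^\ast}(x^\ast)$, the monoid identity is $a^\ast$, and the top element is $1$. But $x\mapsto g_{a^\ast}(x^\ast)$ is by definition exactly the map $\circ_2$ appearing in the statement, and the distinguished constants coincide as well, so the corollary follows verbatim.

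If one prefers a self-contained argument in the style of Theorem 4.11, one simply transcribes its four checks with $a$ replaced throughout by $a^\ast$: that $Fix_{g_{a^\ast}}(L)$ is closed under $\oplus$ with identity $a^\ast$; that it is closed under $\circ_2$, via $g_{a^\ast}g_{a^\ast}(x^\ast)=a^\ast\oplus a^\ast\oplus x^\ast=a^\ast\oplus x^\ast$ using the idempotency $a^\ast\oplus a^\ast=a^\ast$ coming from $a^\ast\in B(L)$; the axioms (MV2) and (MV3); and (MV4), where one again reduces expressions of the form $x\odot(a^\ast)^\ast\oplus a^\ast$ to lattice joins by invoking $a^\ast\in B(L)$ together with the Boolean-element distributivity identities of Proposition 2.5. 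Since this is a routine corollary, there is no genuine obstacle; the single point one must not skip is verifying $a^\ast\in B(L)$, which is precisely what legitimizes the reduction to Theorem 4.11.
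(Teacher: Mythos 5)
Your proof is correct and takes essentially the same route as the paper: the paper's proof is just ``similar to that of Theorem 4.11,'' and your observation that one may literally instantiate Theorem 4.11 at the Boolean element $a^\ast$ --- legitimate because $(B(L),\oplus,\ast,0)$ is a subalgebra of $L$ and hence closed under $\ast$, so $a^\ast\in B(L)$ --- is exactly that argument made precise. Nothing further is needed.
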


\begin{proof} The proof is similar to that of Theorem 4.11.
\end{proof}

The following theorem shows the relationship between $Fix_{d_a}(L)$ and $Fix_{g_{a^{\ast}}}(L)$.

\begin{theorem}\emph{ Let $L$ be an MV-algebra and $a\in B(L)$. Then MV-algebras  $(Fix_{d_a}(L),\oplus,\neg_1,0,a)$ and
$(Fix_{g_{a^{\ast}}}(L),\oplus,\circ_2,a^\ast,1)$ are isomorphic.}
\end{theorem}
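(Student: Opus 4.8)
The plan is to write down an explicit pair of mutually inverse maps and check that one of them respects the MV-operations. First I would record the shapes of the two fixed-point sets: since $a\in B(L)$ (hence $a^{\ast}\in B(L)$), Propositions 2.4 and 2.5 give $x\odot a=x\wedge a$ and $a^{\ast}\oplus x=a^{\ast}\vee x$ for all $x\in L$, so that
\[
Fix_{d_a}(L)=\{x\in L\mid x\le a\}=\,\downarrow a,\qquad Fix_{g_{a^{\ast}}}(L)=\{x\in L\mid a^{\ast}\le x\}=\,\uparrow a^{\ast}.
\]
The candidate isomorphism is $\varphi:=g_{a^{\ast}}\big|_{Fix_{d_a}(L)}$, i.e. $\varphi(x)=a^{\ast}\oplus x$, with candidate inverse $\psi:=d_a\big|_{Fix_{g_{a^{\ast}}}(L)}$, i.e. $\psi(y)=a\odot y$. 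Note that by Theorem 4.9 this is simply the restriction of the adjoint derivation $d_a^{\ast}=g_{a^{\ast}}$, so the statement is a natural companion to the earlier results of the section.

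Next I would verify that $\varphi$ and $\psi$ are well defined and mutually inverse. Well-definedness is clear: $a^{\ast}\oplus x\ge a^{\ast}$ and $a\odot y\le a$, and $\varphi(x)=g_{a^{\ast}}(x)$ is a fixed point of $g_{a^{\ast}}$ (dually for $\psi$) because $a^{\ast}\oplus a^{\ast}=a^{\ast}$. For the inverse identities, for $x\le a$ one computes $\psi\varphi(x)=a\wedge(a^{\ast}\vee x)=(a\wedge a^{\ast})\vee(a\wedge x)=0\vee x=x$ using distributivity, and dually $\varphi\psi(y)=a^{\ast}\vee(a\wedge y)=(a^{\ast}\vee a)\wedge(a^{\ast}\vee y)=1\wedge y=y$ for $y\ge a^{\ast}$. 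Alternatively, this bijectivity follows at once from the classical fact that the two legs of the Galois connection $(d_a,g_{a^{\ast}})$ of Theorem 4.9 restrict to mutually inverse order-isomorphisms between their images, together with $Fix_{d_a}(L)=\mathrm{im}(d_a)$ (Proposition 3.8(6)) and $Fix_{g_{a^{\ast}}}(L)=\mathrm{im}(g_{a^{\ast}})$.

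Then I would check that $\varphi$ is an MV-homomorphism; since both sides are MV-algebras by Theorems 4.10 and 4.14, this identifies $\varphi$ as an isomorphism. Preservation of constants is one line: $\varphi(0)=a^{\ast}$ (the zero of the codomain) and $\varphi(a)=a^{\ast}\oplus a=1$ (the top of the codomain). For $\oplus$, both fixed-point sets are closed under the $\oplus$ inherited from $L$ (already used in Theorems 4.10 and 4.11), and by associativity, commutativity and $a^{\ast}\oplus a^{\ast}=a^{\ast}$ one gets $\varphi(x\oplus y)=a^{\ast}\oplus x\oplus y=(a^{\ast}\oplus x)\oplus(a^{\ast}\oplus y)=\varphi(x)\oplus\varphi(y)$. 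For the negations I would unwind the definitions $\neg_1 x=d_a(x^{\ast})=a\odot x^{\ast}$ and $z^{\circ_2}=g_{a^{\ast}}(z^{\ast})=a^{\ast}\oplus z^{\ast}$: on one side $\varphi(\neg_1 x)=a^{\ast}\oplus(a\odot x^{\ast})$, and on the other side $(\varphi(x))^{\circ_2}=a^{\ast}\oplus(a^{\ast}\oplus x)^{\ast}=a^{\ast}\oplus(a\odot x^{\ast})$, using (MV2) and the definition of $\odot$ to get $(a^{\ast}\oplus x)^{\ast}=a\odot x^{\ast}$; hence the two agree. Therefore $\varphi$ is a bijective homomorphism, which proves the theorem.

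The difficulty here is organizational rather than substantial: the trap to avoid is that the monoid operation $\oplus$ on each fixed-point set is simply the one inherited from $L$, whereas the unary operations $\neg_1$ and $\circ_2$ are the twisted negations $d_a(\,\cdot^{\ast})$ and $g_{a^{\ast}}(\,\cdot^{\ast})$ supplied by Theorems 4.10 and 4.14 — not the restriction of $^{\ast}$. Once this is kept straight, the negation-preservation step is the only one that genuinely uses the Boolean-center identities of Proposition 2.5, and everything else is bookkeeping. I would also sanity-check the degenerate cases $a=0$ and $a=1$, where one of the two algebras collapses to the one-element MV-algebra, to confirm the formulas for $\varphi$ and $\psi$ still behave correctly.
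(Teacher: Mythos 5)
Your proposal is correct and follows essentially the same route as the paper: the same map $x\mapsto a^{\ast}\oplus x$ with inverse $y\mapsto a\odot y$, and the same homomorphism checks. The only differences are organizational — you establish bijectivity via explicit mutually inverse maps (and the Galois connection) where the paper argues injectivity and surjectivity separately, and your negation computation, which matches both sides to $a^{\ast}\oplus(a\odot x^{\ast})$ directly, is actually cleaner than the paper's.
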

\begin{proof} For all $a\in L$, let $f:Fix_{d_a}(L)\longrightarrow Fix_{g_{a^{\ast}}}(L)$ be defined by $f(x)=a^\ast \oplus x$ for all $x\in Fix_{d_a}(L)$. Clearly, $f$ is a map from $Fix_{d_a}(L)$ to $Fix_{g_{a^{\ast}}}(L)$, that is, $f$ is well defined.

$(1)$ For all $x,y\in Fix_{d_a}(L)$, we have $f(x\oplus y)=a^\ast\oplus(x\oplus y)=a^\ast \oplus a^\ast \oplus x\oplus y=(a^\ast \oplus x)\oplus (a^\ast \oplus y)=f(x)\oplus f(y)$, that is, $f(x\oplus y)=f(x)\oplus f(y)$.

$(2)$ For all $x\in Fix_{d_a}(L)$, we have $f(\neg_1 x)=a^\ast\oplus (a\odot x^\ast)=a^\ast\vee(a\wedge x^\ast)=(a^\ast \vee a)\wedge(a^\ast\vee x)=a^\ast\oplus x$. One the other hand, we have $ (f(x))^{\circ_2}=(a^\ast \oplus x)^{\circ_2}=a^\ast \oplus (a^\ast \oplus x)=a^\ast\oplus x$ and hence $f(\neg_1 x)=(f(x))^{\circ_2}$.

$(3)$ For all $x,y\in Fix_{d_a}(L)$, that is, $x=a\odot x$ and $y=a\odot y$, if $f(x)=f(y)$, then $a^\ast\oplus x=a^\ast\oplus y$. From $a\odot x\leq x$, we have $x\leq a\rightarrow x=a^\ast \oplus x$, then $x\leq a^\ast \oplus y=a\rightarrow y$. It follows that $a\odot x\leq y$, which implies $x\leq y$. Similarly, we can prove $y\leq x$ and hence $x=y$. Consequently, we obtain that $f$ is injective.

$(4)$  One can easily prove the fact that $x^\ast \vee y=x^\ast\oplus(x\odot y)=y$ if and only if there exists $z\in L$ such that $x^\ast\oplus z=y$ for all $x,y\in L$. From the above fact, we will prove that $f$ is surjective. Now, for all $x\in Fix_{g_{a^{\ast}}}(L)$, then $x= g_{a^{\ast}}(x)=a^\ast\oplus x$. Using the above fact, we have that $f(a\odot x)=a^\ast\oplus(a\odot x)=a^\ast\oplus(a\odot(a^\ast\oplus x))=a^\ast\oplus x=x$. Thus, we conclude that $f$ is surjective. Also, it is easy to verify that $f^{-1}(x)=a\odot x$ is a homomorphism from  $Fix_{g_{a^{\ast}}}(L)$ to $Fix_{d_a}(L)$.

Combining them, we obtain that $f$ is an isomorphism from $(Fix_{d_a}(L),\oplus,\neg_1,0,a)$ to
$(Fix_{g_{a^{\ast}}}(L),\oplus,\circ_2,a^\ast,1)$. Therefore, MV-algebras $(Fix_{d_a}(L),\oplus,\neg_1,0,a)$ and
$(Fix_{g_{a^{\ast}}}\\
(L),\oplus,\circ_2,a^\ast,1)$ are isomorphic.
\end{proof}

\begin{theorem}\emph{ Let $L$ be an MV-algebra and $a\in B(L)$. Then MV-algebras $(Fix_{d_{a^{\ast}}}(L),\oplus,\neg_2,0,a^\ast)$ and  $(Fix_{g_{a}}(L),\oplus,\circ_1,a^\ast,1)$ are isomorphic.}
\end{theorem}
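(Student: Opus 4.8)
The plan is to mirror the proof of Theorem 4.14 under the substitution $a\mapsto a^{\ast}$, which is legitimate since $a\in B(L)$ forces $a^{\ast}\in B(L)$ and $(a^{\ast})^{\ast}=a$. By Theorem 4.9 we already have $d^{\ast}_{a^{\ast}}=g_{a}$, so $(d_{a^{\ast}},g_{a})$ is a Galois connection, while Corollary 4.12 and Theorem 4.11 supply the two MV-algebra structures in play. The candidate isomorphism is $f:Fix_{d_{a^{\ast}}}(L)\longrightarrow Fix_{g_{a}}(L)$ given by $f(x)=a\oplus x=g_{a}(x)$, with tentative inverse $f^{-1}(x)=x\odot a^{\ast}=d_{a^{\ast}}(x)$. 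It is well defined since $g_{a}(f(x))=a\oplus a\oplus x=a\oplus x=f(x)$, so that $f(x)\in Fix_{g_{a}}(L)$.

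The verification then runs through the same four steps as in Theorem 4.14. First, additivity: $f(x\oplus y)=a\oplus x\oplus y=(a\oplus a)\oplus x\oplus y=(a\oplus x)\oplus(a\oplus y)=f(x)\oplus f(y)$, using $a\oplus a=a$. Second, compatibility of the negations, $f(\neg_{2}x)=(f(x))^{\circ_{1}}$: expanding $\neg_{2}x=d_{a^{\ast}}(x^{\ast})=x^{\ast}\odot a^{\ast}$ and $(f(x))^{\circ_{1}}=g_{a}((a\oplus x)^{\ast})$ and repeatedly using that $a\in B(L)$ (Propositions 2.3(1),(11), 2.4 and 2.5), both sides collapse to $a\oplus x^{\ast}$, exactly as the corresponding identity was handled in Theorem 4.14. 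Third, injectivity: if $f(x)=f(y)$ then $a\oplus x=a\oplus y$; since $x\le a^{\ast}\rightarrow x=a\oplus x$ by Proposition 2.3(9), we obtain $x\le a\oplus y=a^{\ast}\rightarrow y$, hence $a^{\ast}\odot x\le y$, and therefore $x=a^{\ast}\odot x\le y$; symmetrically $y\le x$, so $x=y$. Fourth, surjectivity: for $x\in Fix_{g_{a}}(L)$ we have $x=a\oplus x$, and using the elementary equivalence ``$w\vee y=y$ if and only if $y=w\oplus z$ for some $z$'' one checks $f(a^{\ast}\odot x)=a\oplus(a^{\ast}\odot(a\oplus x))=(a\oplus x)\vee a=a\oplus x=x$; moreover $x\mapsto a^{\ast}\odot x$ is itself a homomorphism, hence a two-sided inverse of $f$.

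I expect the negation-compatibility identity $f(\neg_{2}x)=(f(x))^{\circ_{1}}$ to be the main computational obstacle, since it requires unwinding both relative negations and systematically exploiting $a\in B(L)$; the remaining three steps are short, resting only on the idempotency $a\oplus a=a$, the order facts of Proposition 2.3, and the Galois connection $(d_{a^{\ast}},g_{a})$ furnished by Theorem 4.9. Assembling the four steps shows that $f$ is an MV-algebra isomorphism from $(Fix_{d_{a^{\ast}}}(L),\oplus,\neg_{2},0,a^{\ast})$ onto the MV-algebra $(Fix_{g_{a}}(L),\oplus,\circ_{1},a,1)$ of Theorem 4.11, which is precisely the assertion.
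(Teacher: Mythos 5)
Your proposal is correct and follows essentially the same route as the paper: the same map $x\mapsto a\oplus x$ with inverse $x\mapsto x\odot a^{\ast}$, and the same four verification steps (additivity, compatibility of the negations, injectivity via $x=a^{\ast}\odot x\leq y$, surjectivity via the join identity). Your computation of the negation step, with both sides reducing to $a\oplus x^{\ast}$, is in fact cleaner than the paper's (which contains typographical slips there), and your closing signature $(Fix_{g_{a}}(L),\oplus,\circ_{1},a,1)$ agrees with Theorem 4.11.
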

\begin{proof} For all $a\in L$, let $u:Fix_{d_{a^{\ast}}}(L)\longrightarrow Fix_{g_{a}}(L)$ be defined by $f(x)=a \oplus x$ for all $x\in Fix_{d_{a^{\ast}}}(L)$. Clearly, $u$ is a map from $Fix_{d_{a^{\ast}}}(L)$ to $Fix_{g_{a}}(L)$, that is, $u$ is well defined.

$(1)$ For all $x,y\in Fix_{d_{a^{\ast}}}(L)$, we have $u(x\oplus y)=a\oplus(x\oplus y)=a \oplus a\oplus x\oplus y=(a \oplus x)\oplus (a \oplus y)=u(x)\oplus u(y)$, that is, $u(x\oplus y)=u(x)\oplus u(y)$.

$(2)$ For all $x\in Fix_{d_{a^{\ast}}}(L)$, we have $u(\neg_2 x)=a\oplus (a\odot x^\ast)=a\vee(a\wedge x^\ast)=(a \vee a)\wedge(a\vee x)=a\oplus x$. One the other hand, we have $( u(x))^{\circ_1}=(a \oplus x)^{\circ_1}=a \oplus (a \oplus x)=a\oplus x$ and hence $u(\neg_2 x)= (u(x))^{\circ_1}$.

$(3)$ For all $x,y\in Fix_{d_{a^{\ast}}}(L)$, that is, $x=a\ominus x$ and $y=a\ominus y$, if $u(x)=u(y)$, then $a\oplus x=a\oplus y$. From $a\ominus x\leq x$, we have $x\leq a\oplus x$, then $x\leq a \oplus y$. It follows that $a\ominus x\leq y$, which implies $x\leq y$. Similarly, we can prove $y\leq x$ and hence $x=y$. Consequently, we obtain that $u$ is injective.

$(4)$  One can easily prove the fact that $x \vee y=x\oplus(y\ominus x)=y$ if and only if there exists $z\in L$ such that $y\ominus z=x$ for all $x,y\in L$. From the above fact, we will prove that $u$ is surjective. Now, for all $x\in Fix_{g_{a}}(L)$, then $x= g_{a}(x)=a\oplus x$. Using the above fact, we have that $u(x\ominus a)=a\oplus(x\ominus a)=a\oplus((a\oplus x)\ominus a)=a\oplus x=x$. Thus, we conclude that $u$ is surjective. Also, it is easy to verify that $u^{-1}(x)=x\ominus a$ is a homomorphism from  $Fix_{g_{a}}(L)$ to $Fix_{d_{a^{\ast}}}(L)$.

Combining them, we obtain that $u$ is an isomorphism from $(Fix_{d_{a^{\ast}}}(L),\oplus,\neg_2,0,a^\ast)$ to $(Fix_{g_{a}}(L),\oplus,\circ_1,a^\ast,1)$ Therefore, MV-algebras $(Fix_{d_{a^{\ast}}}(L),\oplus,\neg_2,0,a^\ast)$ and  $(Fix_{g_{a}}\\
(L),\oplus,\circ_1,a^\ast,1)$ are isomorphic.
\end{proof}

In the following, we prove a representation theorem for MV-algebras

\begin{theorem} \emph{Let $L$ be an MV-algebra and $a\in B(L)$. Then MV-algebra $L$ is isomorphic to the direct product $(Fix_{d_a}(L),\oplus,\neg_1,0,a)$ and $(Fix_{d_{a^{\ast}}}(L),\oplus,\neg_2,0,a^\ast)$.}
\end{theorem}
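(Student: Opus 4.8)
The plan is to exhibit the decomposition map explicitly. Since $a\in B(L)$, Proposition 2.4 gives $x\odot a=x\wedge a$ and $x\odot a^\ast=x\wedge a^\ast$ for all $x\in L$, so $Fix_{d_a}(L)=\{x\in L:x\le a\}$ and $Fix_{d_{a^\ast}}(L)=\{x\in L:x\le a^\ast\}$. Define $\varphi:L\longrightarrow Fix_{d_a}(L)\times Fix_{d_{a^\ast}}(L)$ by $\varphi(x)=(d_a(x),d_{a^\ast}(x))=(x\odot a,\,x\odot a^\ast)$. First I would check that $\varphi$ is well defined, i.e. $d_a(x)\in Fix_{d_a}(L)$ and $d_{a^\ast}(x)\in Fix_{d_{a^\ast}}(L)$; this follows from $a\odot a=a$ and $a^\ast\odot a^\ast=a^\ast$, since then $d_a(d_a(x))=(x\odot a)\odot a=x\odot a$, and symmetrically for $d_{a^\ast}$.

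Next I would verify that $\varphi$ is a homomorphism of MV-algebras. For $\oplus$: by Proposition 2.5(3), $(x\oplus y)\wedge a=(x\wedge a)\oplus(y\wedge a)$, that is $(x\oplus y)\odot a=(x\odot a)\oplus(y\odot a)$, and likewise with $a^\ast$, so $\varphi(x\oplus y)=\varphi(x)\oplus\varphi(y)$ componentwise. For the negations, recall $\neg_1 u=d_a(u^\ast)$ and $\neg_2 v=d_{a^\ast}(v^\ast)$. The first component of $\varphi(x^\ast)$ is $x^\ast\odot a$, while $\neg_1(x\odot a)=(x\odot a)^\ast\odot a=(x^\ast\oplus a^\ast)\odot a=(x^\ast\wedge a)\oplus(a^\ast\wedge a)=x^\ast\wedge a=x^\ast\odot a$, using Proposition 2.5(3) and $a\wedge a^\ast=0$; the $a^\ast$-component is handled symmetrically, so $\varphi(x^\ast)$ is the negation of $\varphi(x)$ in the product. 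Finally $\varphi(0)=(0,0)$, and consistently $\varphi(1)=(1\odot a,1\odot a^\ast)=(a,a^\ast)$, which is the top element of the product since $\neg_1 0=d_a(1)=a$ and $\neg_2 0=d_{a^\ast}(1)=a^\ast$.

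It then remains to show $\varphi$ is a bijection. For injectivity, if $\varphi(x)=\varphi(y)$ then $x\wedge a=y\wedge a$ and $x\wedge a^\ast=y\wedge a^\ast$; since $(L,\wedge,\vee)$ is distributive and $a\vee a^\ast=1$ (as $a\in B(L)$), we get $x=x\wedge 1=(x\wedge a)\vee(x\wedge a^\ast)=(y\wedge a)\vee(y\wedge a^\ast)=y$. For surjectivity, given $(u,v)$ with $u\le a$ and $v\le a^\ast$, put $x=u\oplus v$; then $x\odot a=(u\oplus v)\wedge a=(u\wedge a)\oplus(v\wedge a)=u\oplus 0=u$, because $u\wedge a=u$ and $v\wedge a\le a^\ast\wedge a=0$, and symmetrically $x\odot a^\ast=v$, so $\varphi(x)=(u,v)$. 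Combining these, $\varphi$ is an MV-algebra isomorphism, which gives the claimed decomposition of $L$ as a direct product.

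I do not expect a genuine obstacle here: the argument is the standard Boolean-element decomposition $L\cong{\downarrow}a\times{\downarrow}a^\ast$, and every step reduces to Propositions 2.4 and 2.5 together with distributivity of the underlying lattice. The only point demanding care is matching the two distinct negations $\neg_1$ and $\neg_2$ on the two factors with the single operation $\ast$ of $L$; once their definitions are unwound via Proposition 2.5(3) and $a\wedge a^\ast=0$, this compatibility is immediate. (Alternatively one could first reduce to Theorems 4.15 and 4.16, but the direct verification above is shorter.)
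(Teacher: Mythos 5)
Your proof is correct and follows essentially the same route as the paper: the same map $\varphi(x)=(x\odot a,\,x\odot a^\ast)$ (the paper writes the second component as $x\ominus a$, which is the same thing) and the same distributivity-based decomposition $x=(x\wedge a)\vee(x\wedge a^\ast)$ for bijectivity. The only difference is that you verify the homomorphism property and the compatibility of $\neg_1,\neg_2$ explicitly via Proposition 2.5, where the paper merely cites Theorems 3.13 and 4.1, and your preimage $u\oplus v$ coincides with the paper's $u\vee v$ since $u\wedge v\leq a\wedge a^\ast=0$.
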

\begin{proof}For all $a\in L$, let $\varphi:L\longrightarrow Fix_{d_a}(L)\times Fix_{d_{a^{\ast}}}(L)$ be defined by $\varphi(x)=(x\odot a, x\ominus a)$ for all $x\in L$. Clearly, $\varphi$ is a map from $L$ to $Fix_{d_a}(L)\times Fix_{d_{a^{\ast}}}(L)$, that is, $\varphi$ is well defined.

$(1)$ From Theorem 3.13 and 4.1, we get $\varphi$ is a homomorphism from $L$ to $Fix_{d_a}(L)\times Fix_{d_{a^{\ast}}}(L)$.

$(2)$ Now, we prove that $\varphi$ is injective. If $x_1\in Fix_{d_a}(L)$ and $x_2\in Fix_{d_{a^{\ast}}}(L)$, then for $x=x_1\vee x_2$, we have $\varphi(x)=(x_1,x_2)$  since $(L,\wedge,\vee)$ is a distributive lattice and $x=(x\wedge a)\vee(x\wedge a^\ast)$ for all $x\in L$.  Consequently, we obtain that $\varphi$ is injective. Also, it is easy to verify that $\varphi^{-1}(x,y)=x\vee y$ for all $(x,y)\in Fix_{d_a}(L)\times Fix_{d_{a^{\ast}}}(L)$ is a homomorphism from $Fix_{d_a}(L)\times Fix_{d_{a^{\ast}}}(L)$ to $L$.

Combining them, we obtain that $\varphi$ is an isomorphism from $L$ to the direct product $(Fix_{d_a}(L),\oplus,\neg_1,0,a)$ and $(Fix_{d_{a^{\ast}}}(L),\oplus,\neg_2,0,a^\ast)$. Therefore, MV-algebra $L$ is isomorphic to the direct product $(Fix_{d_a}(L),\oplus,\neg_1,0,a)$ and $(Fix_{d_{a^{\ast}}}(L),\oplus,\neg_2,0,a^\ast)$.
\end{proof}

\begin{theorem} \emph{Let $L$ be an MV-algebra and $a\in B(L)$. Then MV-algebra $L$ is isomorphic to the direct product $(Fix_{g_{a^{\ast}}}(L),\oplus,\circ_2,a^\ast,1)$ and $(Fix_{g_{a}}(L),\oplus,\circ_1,a^\ast,1)$.}
\end{theorem}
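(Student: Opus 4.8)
The plan is to deduce this directly from the isomorphisms already established, so that no fresh axiom-chasing is needed. By Theorem 4.17 there is an MV-algebra isomorphism
$$\varphi\colon L\longrightarrow Fix_{d_a}(L)\times Fix_{d_{a^{\ast}}}(L),\qquad \varphi(x)=(x\odot a,\;x\ominus a).$$
By Theorem 4.15 the map $f\colon Fix_{d_a}(L)\to Fix_{g_{a^{\ast}}}(L)$ with $f(x)=a^{\ast}\oplus x$ is an isomorphism, and by Theorem 4.16 the map $u\colon Fix_{d_{a^{\ast}}}(L)\to Fix_{g_{a}}(L)$ with $u(x)=a\oplus x$ is an isomorphism. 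First I would record the routine fact that the componentwise product $f\times u$ of two MV-algebra isomorphisms is an MV-algebra isomorphism of the corresponding direct products ($\oplus$, the negation, and the constants are all handled coordinatewise). Then $\Psi:=(f\times u)\circ\varphi\colon L\longrightarrow Fix_{g_{a^{\ast}}}(L)\times Fix_{g_{a}}(L)$ is an isomorphism, which is precisely the assertion.

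To make the map explicit, I would simplify $\Psi(x)=\big(a^{\ast}\oplus(x\odot a),\;a\oplus(x\ominus a)\big)$. Since $a,a^{\ast}\in B(L)$, Proposition 2.4 gives $a^{\ast}\oplus z=a^{\ast}\vee z$ and $x\odot a=x\wedge a$, so by distributivity $a^{\ast}\oplus(x\odot a)=a^{\ast}\vee(a\wedge x)=(a^{\ast}\vee a)\wedge(a^{\ast}\vee x)=a^{\ast}\vee x=a^{\ast}\oplus x=g_{a^{\ast}}(x)$; symmetrically $a\oplus(x\ominus a)=a\oplus(x\wedge a^{\ast})=(a\vee a^{\ast})\wedge(a\vee x)=a\oplus x=g_{a}(x)$. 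Hence $\Psi(x)=(g_{a^{\ast}}(x),g_{a}(x))$ is the natural ``evaluate both Boolean implicative derivations'' map, and its inverse is $(x,y)\mapsto (x\odot a)\vee(y\ominus a)$, obtained by composing the inverses $f^{-1}(x)=a\odot x$, $u^{-1}(x)=x\ominus a$ and $\varphi^{-1}(x,y)=x\vee y$ appearing in Theorems 4.15--4.17.

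Alternatively one could bypass the composition and verify directly, in the style of the proof of Theorem 4.17, that $\Psi(x)=(g_{a^{\ast}}(x),g_{a}(x))$ is a homomorphism (Theorem 4.5 together with Theorems 4.11 and 4.13 show $g_{a^{\ast}}$ and $g_a$ behave well with respect to $\oplus$ and the respective negations $\circ_2$ and $\circ_1$), that it is injective (because $x\mapsto(x\wedge a,\,x\wedge a^{\ast})$ is injective, using $x=(x\wedge a)\vee(x\wedge a^{\ast})$ for $a\in B(L)$), and that it is surjective with the inverse above. Either way there is no serious obstacle: the only point needing a little care is matching the derived negations $\circ_1$, $\circ_2$ of the two fixed-point MV-algebras under $\Psi$, and this is exactly what the coordinatewise computations above (equivalently, Theorems 4.15 and 4.16) supply.
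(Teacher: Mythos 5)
Your argument is correct, and it reaches the conclusion by a genuinely different route from the paper. The paper proves this decomposition directly, mirroring its proof of $L\cong Fix_{d_a}(L)\times Fix_{d_{a^{\ast}}}(L)$: it writes down the map $\phi(x)=(x\oplus a,\,a\rightarrow x)$ (the source actually has $x\rightarrow a$ in the second coordinate, which in general does not lie in $Fix_{g_{a^{\ast}}}(L)$ and must be read as $a\rightarrow x=a^{\ast}\oplus x$), asserts the homomorphism property by appeal to the theorems making $Fix_{g_a}(L)$ and $Fix_{g_{a^{\ast}}}(L)$ MV-algebras, proves injectivity from $x=(x\vee a)\wedge(x\vee a^{\ast})$, and exhibits the inverse $(x,y)\mapsto x\wedge y$. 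You instead compose three isomorphisms already established: $L\cong Fix_{d_a}(L)\times Fix_{d_{a^{\ast}}}(L)$ together with the coordinate isomorphisms $Fix_{d_a}(L)\cong Fix_{g_{a^{\ast}}}(L)$ and $Fix_{d_{a^{\ast}}}(L)\cong Fix_{g_a}(L)$ (these are Theorems 4.16, 4.14 and 4.15 in the paper's numbering; your labels 4.17, 4.15, 4.16 are shifted by one), and then simplify the composite to $x\mapsto(a^{\ast}\oplus x,\,a\oplus x)=(g_{a^{\ast}}(x),g_a(x))$, which agrees with the paper's intended map up to the order of the factors. Your route is shorter and settles the one delicate point, the compatibility of the derived negations $\circ_2$ and $\circ_1$, for free via the coordinate isomorphisms, precisely where the paper's direct verification is tersest; what the paper's self-contained computation buys is an explicit description of the map and its inverse, but your simplification recovers both. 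No gaps.
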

\begin{proof} For all $a\in L$, let $\phi:L\longrightarrow Fix_{g_a}(L)\times Fix_{g_{a^{\ast}}}(L)$ be defined by $\varphi(x)=(x\oplus a, x\rightarrow a)$ for all $x\in L$. Clearly, $\phi$ is a map from $L$ to $Fix_{g_a}(L)\times Fix_{d_{g^{\ast}}}(L)$, that is, $\phi$ is well defined.

$(1)$ From Theorems 4.11 and Corollary 4.13, we get $\phi$ is a homomorphism from $L$ to $Fix_{g_a}(L)\times Fix_{g_{a^{\ast}}}(L)$.

$(2)$ Now, we prove that $\phi$ is injective. If $x_1\in Fix_{g_a}(L)$ and $x_2\in Fix_{g_{a^{\ast}}}(L)$, then for $x=x_1\wedge x_2$, we have $\phi(x)=(x_1,x_2)$  since $(L,\wedge,\vee)$ is a distributive lattice and $x=(x\vee a)\wedge(x\vee a^\ast)$ for all $x\in L$.  Consequently, we obtain that $\varphi$ is injective.  Also, it is easy to verify that $\phi^{-1}(x,y)=x\wedge y$ for all $(x,y)\in Fix_{g_a}(L)\times Fix_{g_{a^{\ast}}}(L)$ is a homomorphism from $Fix_{g_a}(L)\times Fix_{g_{a^{\ast}}}(L)$ to $L$.

Combining them, we obtain that $\phi$ is an isomorphism from $L$ to the direct product $(Fix_{g_{a^{\ast}}}(L),\oplus,\circ_2,a^\ast,1)$ and $(Fix_{g_{a}}(L),\oplus,\circ_1,a^\ast,1)$. Therefore, MV-algebra $L$ is isomorphic to the direct product $(Fix_{g_{a^{\ast}}}(L),\oplus,\circ_2,a^\ast,1)$ and $(Fix_{g_{a}}(L),\oplus,\circ_1,a^\ast,1)$.
\end{proof}

From Theorem 4.14-4.17, the following theorems are immediate consequence.

\begin{theorem} \emph{Let $L$ be an MV-algebra and $a\in B(L)$. Then MV-algebra $L$ is isomorphic to the direct product $(Fix_{d_a}(L),\oplus,\neg_1,0,a)$ and $(Fix_{g_{a}}(L),\oplus,\circ_1,a^\ast,1)$.}
\end{theorem}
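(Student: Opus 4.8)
The plan is to realise the desired isomorphism by composing the representation theorem already proved with one of the ``fixed point'' isomorphisms. By Theorem 4.16 the map $\varphi : L \to Fix_{d_a}(L) \times Fix_{d_{a^{\ast}}}(L)$ given by $\varphi(x) = (x \odot a,\, x \ominus a)$ is an MV-algebra isomorphism, and by Theorem 4.15 the map $u : Fix_{d_{a^{\ast}}}(L) \to Fix_{g_a}(L)$ given by $u(x) = a \oplus x$ is an MV-algebra isomorphism. Since the componentwise product $id_{Fix_{d_a}(L)} \times u$ of isomorphisms is again an isomorphism (MV-algebras form a variety, so this is routine), the composite
\[
\psi \;=\; \bigl(id_{Fix_{d_a}(L)} \times u\bigr) \circ \varphi \;:\; L \longrightarrow Fix_{d_a}(L) \times Fix_{g_a}(L)
\]
is an MV-algebra isomorphism, which is exactly the assertion of the theorem.

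To present the argument cleanly I would first rewrite $\psi$ in closed form. For $x \in L$ we get $\psi(x) = (x \odot a,\; a \oplus (x \ominus a))$; using the identity $x \vee a = (x \odot a^{\ast}) \oplus a = a \oplus (x \ominus a)$, which holds in every MV-algebra, together with $a \in B(L)$ (so that $x \vee a = x \oplus a$ by Proposition 2.3), this simplifies to $\psi(x) = (d_a(x),\, g_a(x)) = (x \odot a,\, x \oplus a)$. Its inverse is inherited from $\varphi^{-1}(x,y) = x \vee y$ and $u^{-1}(y) = y \ominus a$, namely $\psi^{-1}(x,y) = x \vee (y \ominus a)$ for $(x,y) \in Fix_{d_a}(L) \times Fix_{g_a}(L)$. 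One may alternatively verify directly that $\psi$ is a homomorphism and a bijection, invoking Theorem 4.1 for the first coordinate and Theorem 4.11 together with Corollary 4.13 for the second.

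There is essentially no obstacle here: all the substantive work --- that $\varphi$ is an isomorphism of $L$ onto the product of the two fixed point algebras, and that $Fix_{d_{a^{\ast}}}(L) \cong Fix_{g_a}(L)$ --- has already been carried out in Theorems 4.16 and 4.15. The only items left to check are the elementary identity $a \oplus (x \ominus a) = a \oplus x$ for $a \in B(L)$ and the standard fact that composites and componentwise products of MV-homomorphisms are MV-homomorphisms. Indeed, at the level of isomorphism classes the proof reduces to the single chain $L \cong Fix_{d_a}(L) \times Fix_{d_{a^{\ast}}}(L) \cong Fix_{d_a}(L) \times Fix_{g_a}(L)$.
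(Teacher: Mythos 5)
Your proof is correct and is essentially the paper's own argument: the paper disposes of this theorem in one line by combining the product decomposition $L\cong Fix_{d_a}(L)\times Fix_{d_{a^{\ast}}}(L)$ of Theorem 4.16 with the isomorphism $Fix_{d_{a^{\ast}}}(L)\cong Fix_{g_a}(L)$, which is exactly the chain you compose. Your version merely makes the composite $\psi(x)=(x\odot a,\,x\oplus a)$ and its inverse explicit, and cites Theorem 4.15 for the second isomorphism where the paper (apparently by a slip) cites Theorem 4.14.
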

\begin{proof} It follows from Theorems 4.14 and 4.16.
\end{proof}

\begin{theorem} \emph{Let $L$ be an MV-algebra and $a\in B(L)$. Then MV-algebra $L$ is isomorphic to the direct product $(Fix_{d_{a^{\ast}}}(L),\oplus,\neg_2,0,a^\ast)$ and $(Fix_{g_{a^{\ast}}}(L),\oplus,\circ_2,a^\ast,1)$.}
\end{theorem}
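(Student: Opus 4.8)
The plan is to obtain this statement formally from isomorphisms already in hand, in the spirit of the proof of Theorem~4.18. Recall from Theorem~4.16 the representation isomorphism $\varphi\colon L\longrightarrow Fix_{d_a}(L)\times Fix_{d_{a^{\ast}}}(L)$, $\varphi(x)=(x\odot a,\,x\ominus a)$, and from Theorem~4.14 the MV-isomorphism $f\colon Fix_{d_a}(L)\longrightarrow Fix_{g_{a^{\ast}}}(L)$, $f(x)=a^{\ast}\oplus x$. First I would form the product map $f\times id$, which is an isomorphism of $Fix_{d_a}(L)\times Fix_{d_{a^{\ast}}}(L)$ onto $Fix_{g_{a^{\ast}}}(L)\times Fix_{d_{a^{\ast}}}(L)$, because a componentwise pair of MV-isomorphisms is an MV-isomorphism. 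Composing with $\varphi$ gives an isomorphism $L\longrightarrow Fix_{g_{a^{\ast}}}(L)\times Fix_{d_{a^{\ast}}}(L)$; explicitly it sends $x$ to $\bigl(a^{\ast}\oplus(x\odot a),\,x\ominus a\bigr)$, and since $a\in B(L)$ one simplifies $a^{\ast}\oplus(x\odot a)=(a^{\ast}\oplus x)\wedge(a^{\ast}\oplus a)=a^{\ast}\oplus x$, so the map is $x\mapsto\bigl(a^{\ast}\oplus x,\,x\ominus a\bigr)=\bigl(g_{a^{\ast}}(x),\,d_{a^{\ast}}(x)\bigr)$. Finally, the coordinate swap $(u,v)\mapsto(v,u)$ is itself an MV-isomorphism, so the factors may be listed in either order, and we conclude that $L$ is isomorphic to the direct product of $(Fix_{d_{a^{\ast}}}(L),\oplus,\neg_2,0,a^{\ast})$ and $(Fix_{g_{a^{\ast}}}(L),\oplus,\circ_2,a^{\ast},1)$, as claimed.

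No essentially new work is involved: that each fixed-point set carries the indicated MV-structure is Theorem~4.11 together with Corollaries~4.12 and~4.13, and that $\varphi$ and $f$ are MV-isomorphisms is Theorems~4.16 and~4.14. What remains to check is purely formal -- that products of homomorphisms and the coordinate swap are homomorphisms, and that the operations and distinguished constants in the two presentations of the product coincide with those in the cited theorems, which is immediate from the definitions of $\neg_2$ and $\circ_2$. Hence I expect no genuine obstacle, and the proof reduces to the single line ``it follows from Theorems~4.14 and~4.16'' (equivalently, from $L\cong Fix_{g_{a^{\ast}}}(L)\times Fix_{g_a}(L)$ and $Fix_{g_a}(L)\cong Fix_{d_{a^{\ast}}}(L)$, that is, Theorems~4.17 and~4.15).

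If a self-contained proof is preferred, one may instead imitate the proof of Theorem~4.16 directly with the homomorphism $\psi(x)=(x\ominus a,\,a\rightarrow x)$ from $L$ into $Fix_{d_{a^{\ast}}}(L)\times Fix_{g_{a^{\ast}}}(L)$ -- it is a homomorphism because $d_{a^{\ast}}$ is an additive derivation (Theorem~4.1, then Theorem~3.13) and $x\mapsto a^{\ast}\oplus x$ is a homomorphism onto $Fix_{g_{a^{\ast}}}(L)$ by the properties of the Boolean center -- and then establish injectivity from the decompositions $x=(x\wedge a)\vee(x\wedge a^{\ast})$ and $x=(x\vee a)\wedge(x\vee a^{\ast})$, valid since $a\in B(L)$ and $(L,\wedge,\vee)$ is distributive, with inverse $(u,v)\mapsto(v\wedge a)\vee u$. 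The composition route above is shorter and reuses what has already been proved.
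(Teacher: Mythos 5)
Your proposal is correct and is essentially the paper's own argument: the paper disposes of this theorem in one line, ``It follows from Theorems 4.15 and 4.17,'' i.e.\ exactly the route $L\cong Fix_{g_{a^{\ast}}}(L)\times Fix_{g_a}(L)$ combined with $Fix_{g_a}(L)\cong Fix_{d_{a^{\ast}}}(L)$ that you list as your equivalent formulation. Your primary route via Theorems 4.14 and 4.16 (and the explicit map $x\mapsto(g_{a^{\ast}}(x),d_{a^{\ast}}(x))$) is an interchangeable composition of the same previously established isomorphisms, so there is no substantive difference.
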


\begin{proof} It follows from Theorems 4.15 and 4.17.
\end{proof}

In what follows, as applications of Boolean derivations, some characterizations of Boolean algebras will be given.

\begin{theorem} \emph{Let $L$ be an MV-algebra. Then the following statements are equivalent:}
\begin{enumerate}[(1)]
              \item \emph{$L$ is a Boolean algebra,
              \item for any $a\in L$, Fix$_{d_a}(L)=(a]$,
              \item for any $a\in L$, Fix$_{g_a}(L)=[a)$.}
            \end{enumerate}
\end{theorem}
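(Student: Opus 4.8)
The plan is to prove the two equivalences $(1)\Leftrightarrow(2)$ and $(1)\Leftrightarrow(3)$ directly; every implication is short.

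First I would record two inclusions valid in \emph{every} MV-algebra. By Proposition 2.3(4), $x\odot a\le x\wedge a\le a$ for all $x,a\in L$, so $d_a(x)=x\odot a=x$ forces $x\le a$; hence $Fix_{d_a}(L)\subseteq(a]$ for every $a\in L$. Dually, writing $a\oplus x=(a^{\ast}\odot x^{\ast})^{\ast}$ and using that $\ast$ is an order-reversing involution, Proposition 2.3(4) gives $a^{\ast}\odot x^{\ast}\le(a\vee x)^{\ast}$, hence $a\oplus x\ge a\vee x\ge a$; so $g_a(x)=a\oplus x=x$ forces $a\le x$, and $Fix_{g_a}(L)\subseteq[a)$ for every $a\in L$. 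These two universal inclusions are the only mildly delicate point, and neither needs any hypothesis on $L$.

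Now assume $(1)$, i.e. $L$ is a Boolean algebra. By Proposition 2.4 we have $x\odot a=x\wedge a$ and $x\oplus a=x\vee a$ for all $x,a\in L$, so $d_a(x)=x$ iff $x\wedge a=x$ iff $x\le a$, whence $Fix_{d_a}(L)=(a]$, giving $(2)$; likewise $g_a(x)=x$ iff $a\vee x=x$ iff $a\le x$, whence $Fix_{g_a}(L)=[a)$, giving $(3)$. Conversely, for $(2)\Rightarrow(1)$ fix $a\in L$; since $a\le a$ we have $a\in(a]=Fix_{d_a}(L)$ by hypothesis, so $a\odot a=d_a(a)=a$. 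As $a$ was arbitrary, every element of $L$ is idempotent, i.e. $L=B(L)$, so $L$ is a Boolean algebra. The implication $(3)\Rightarrow(1)$ is identical: $a\in[a)=Fix_{g_a}(L)$ yields $a\oplus a=a$ for all $a$, again forcing $L$ to be Boolean.

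The main --- indeed essentially the only --- obstacle is establishing the two universal inclusions $Fix_{d_a}(L)\subseteq(a]$ and $Fix_{g_a}(L)\subseteq[a)$ in the first step. Once these are in hand, each implication follows in a single line from Proposition 2.4 together with the trivial memberships $a\in(a]$ and $a\in[a)$.
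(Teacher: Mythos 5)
Your proof is correct and follows essentially the same route as the paper: the forward directions reduce to the Boolean identities $x\odot a=x\wedge a$ and $x\oplus a=x\vee a$, and the converses extract idempotency from the membership $a\in(a]$ (resp.\ $a\in[a)$). The only difference is cosmetic: for the inclusion $(a]\subseteq Fix_{d_a}(L)$ the paper invokes the fact that $Fix_{d_a}(L)$ is an ideal (Proposition 3.19 and Theorem 4.1), whereas you get both inclusions at once from the one-line equivalence $x\wedge a=x\iff x\le a$, which is a mild simplification.
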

\begin{proof} $(1)\Rightarrow (2)$ Assume that $L$ is a Boolean algebra, we have $x\odot x=x$ for all $x\in L$. It follows that $d_a(a)=a\odot a=a$ for all $a\in L$. Thus, $a\in$ Fix$_{d_a}(L)$.  Since $L$ is a Boolean algebra, we have that Fix$_{d_a}(L)$ is an ideal of $L$ by Proposition 3.19 and Theorem 4.1. That is, for all $x\in L$, if $x\leq a$, we obtain that $x\in$ Fix$_{d_a}(L)$, which implies that $(a]\subseteq$ Fix$_{d_a}(L)$. Next, we will show that Fix$_{d_a}(L)\subseteq(a]$.  For all $x\in$ Fix$_{d_a}(L)$, we have $d_a(x)=a\odot x=a\wedge x=x$, which implies that $x\leq a$, that is, $x\in (a]$. It follows that Fix$_{d_a}(L)\subseteq(a]$. Therefore, we obtain that Fix$_{d_a}(L)=(a]$.

$(2)\Rightarrow (1)$ Assume that Fix$_{d_a}(L)=(a]$ for all $a\in L$. Since $a\in (a]$, we get $a\in$ Fix$_{d_a}(L)$. It follows that $d_a(a)=a$, that is, $a\odot a=a$ for all $a\in L$. Therefore, we obtain that $L$ is a Boolean algebra.

$(1)\Rightarrow (3)$ Assume that $L$ is a Boolean algebra, we have $x\oplus x=x$ for all $x\in L$. It follows that $g_a(a)=a\oplus a=a$ for all $a\in L$. Thus, $a\in$ Fix$_{g_a}(L)$. Since $L$ is a Boolean algebra, one can easy to check that Fix$_{g_a}(L)$ is a filter of $L$ by Theorem 4.5. That is, for all $x\in L$, if $a\leq x$, we obtain that $x\in$ Fix$_{g_a}(L)$, which implies that $[a)\subseteq$ Fix$_{g_a}(L)$. Next, we will show that Fix$_{g_a}(L)\subseteq(a]$.  For all $x\in$ Fix$_{g_a}(L)$, we have $g_a(x)=a\oplus x=a\vee x=x$, which implies that $a\leq x$, that is, $x\in [a)$. It follows that Fix$_{g_a}(L)\subseteq[a)$. Therefore, we obtain that Fix$_{g_a}(L)=[a)$.

$(3)\Rightarrow (1)$ Assume that Fix$_{g_a}(L)=[a)$ for all $a\in L$. Since $a\in [a)$, we get $a\in$ Fix$_{g_a}(L)$. It follows that $g_a(a)=a$, that is, $a\oplus a=a$ for all $a\in L$. Therefore, we obtain that $L$ is a Boolean algebra.

\end{proof}

In what follows, we focus on algebraic structure the set of all additive (implicative) Boolean derivations. We denote by $D(L)=\{d_a|a\in B(L)\} (G(L)=\{g_a|a\in B(L)\})$ be the set of all additive (implicative) Boolean derivations of $L$.

\begin{theorem}\emph{ Let $L$ be an MV-algebra. Then $(D(L),\sqcup,\sqcap,\star,d_0,d_1)$ is a Boolean algebra, where $(d_a\sqcup d_b)x=(d_ax)\vee (d_bx)$, $(d_a\sqcap d_b)x=(d_ax)\wedge (d_bx)$, $(d_a)^\star x=d_{a^\ast}x$, $(d_0)^\star=d_1$, for any $d_a,d_b\in D(L)$.}
\end{theorem}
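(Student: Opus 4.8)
The plan is to show that the assignment $a\mapsto d_a$ is an isomorphism of Boolean algebras from the Boolean center $B(L)$ onto $\left(D(L),\sqcup,\sqcap,\star,d_0,d_1\right)$. Since $B(L)$ is a subalgebra of $L$ all of whose elements satisfy $x\oplus x=x$, it is a Boolean algebra, and transporting that structure along the bijection $a\mapsto d_a$ will give the result at once. So first I would introduce $\Phi\colon B(L)\to D(L)$, $\Phi(a)=d_a$. By the very definition of $D(L)$ this map is surjective, and it is injective because $d_a(1)=1\odot a=a$, so $d_a=d_b$ forces $a=b$.

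The key computational fact I would record next is that for $a\in B(L)$ one has $d_a(x)=x\odot a=x\wedge a$ for every $x\in L$, by Proposition 2.4; thus each $d_a$ is literally the ``meet with $a$'' operator on the distributive lattice $(L,\wedge,\vee,0,1)$. Using this together with distributivity of the underlying lattice, I would verify that $\Phi$ carries the Boolean operations of $B(L)$ to $\sqcup,\sqcap,\star$: for $a,b\in B(L)$ and any $x\in L$, $(d_a\sqcup d_b)(x)=(x\wedge a)\vee(x\wedge b)=x\wedge(a\vee b)=d_{a\vee b}(x)$, and since $a\vee b\in B(L)$ this simultaneously shows that $D(L)$ is closed under $\sqcup$ and that $\Phi(a\vee b)=d_a\sqcup d_b$. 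Likewise $(d_a\sqcap d_b)(x)=(x\wedge a)\wedge(x\wedge b)=x\wedge(a\wedge b)=d_{a\wedge b}(x)$, so $\Phi(a\wedge b)=d_a\sqcap d_b$; and $(d_a)^\star(x)=d_{a^\ast}(x)=x\wedge a^\ast$ with $a^\ast\in B(L)$ the Boolean complement of $a$, so $\Phi(a^\ast)=(d_a)^\star$. Finally $\Phi(0)=d_0$ is the zero derivation (since $x\odot 0=0$) and $\Phi(1)=d_1=\mathrm{id}_L$ (since $x\odot 1=x$), the least and greatest elements respectively; in particular $(d_0)^\star=d_{0^\ast}=d_1$, as required.

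Putting these computations together, $\Phi$ is a bijection intertwining all five operations, hence an isomorphism of algebras of the signature $(2,2,1,0,0)$; since $\left(B(L),\vee,\wedge,{}^\ast,0,1\right)$ is a Boolean algebra, so is $\left(D(L),\sqcup,\sqcap,\star,d_0,d_1\right)$. The only step that needs any care is the closure/compatibility calculation in the second paragraph, where one must invoke both the identity $x\odot a=x\wedge a$ for $a\in B(L)$ and the distributivity of $(L,\wedge,\vee)$; the remaining verifications are routine bookkeeping. An alternative but essentially equivalent route would be to check the Boolean-algebra axioms for $D(L)$ directly, but pulling the structure back to $B(L)$ via $\Phi$ is cleaner and also makes transparent why the same argument will work for $G(L)$ in the next result.
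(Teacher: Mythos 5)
Your proof is correct, but it takes a genuinely different route from the paper. The paper verifies the Boolean-algebra axioms for $D(L)$ directly: it first computes $d_a\sqcap d_b=d_{a\wedge b}$ and $d_a\sqcup d_b=d_{a\vee b}$ (using exactly the identity $x\odot a=x\wedge a$ for $a\in B(L)$ that you isolate), concludes that $(D(L),\sqcup,\sqcap,d_0,d_1)$ is a bounded lattice, inherits distributivity from $(L,\wedge,\vee)$, and finally checks the complementation laws $d_a\sqcup(d_a)^\star=d_1$ and $d_a\sqcap(d_a)^\star=d_0$. You instead set up the bijection $\Phi\colon B(L)\to D(L)$, $a\mapsto d_a$, show it intertwines all five operations, and transport the Boolean structure from $B(L)$; the closure computations you perform are the same ones the paper uses, so the two arguments share their computational core. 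What your route buys is economy: it proves Theorem 4.21 and Theorem 4.23 (the isomorphism $B(L)\cong D(L)$) in one stroke, and your injectivity argument via $d_a(1)=a$ is more explicit than the paper's, which merely asserts that the map is one-to-one and onto. What the paper's route buys is that Theorem 4.21 stands on its own without presupposing that $(B(L),\wedge,\vee,{}^\ast,0,1)$ is a Boolean algebra --- though that fact is standard and is implicitly relied upon in Theorem 4.23 anyway, so your presentation is a legitimate and arguably cleaner reorganization.
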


\begin{proof} First, we show that $(D(L),\sqcup,\sqcap,d_0,d_1)$ is a bounded lattice with $d_0$ as the smallest element and $d_1$ as the greatest element. For all $d_a,d_b\in D(L)$ and $x\in L$, we have $(d_a\sqcap d_b)(x)=(d_a(x))\wedge (d_b(x))=(a\odot x)\wedge (b\odot x)=(a\wedge x)\wedge (b\wedge x)=(a\wedge b)\wedge x=(a\wedge b)\odot x=d_{a\wedge b}(x)$, that is, $(d_a\sqcap d_b)=d_{a\wedge b}$ and  hence $d_a\sqcap d_b\in D(L)$. Furthermore, we have $(d_a\sqcup d_b)(x)=(d_a(x)\vee d_b(x))=(a\odot x)\vee(b\odot x)=(a\wedge x)\vee(b\wedge x)=(a\vee b)\wedge x=(a\vee b)\odot x=d_{a\vee b}(x)$, that is, $(d_a\sqcup d_b)=d_{a\vee b}$ and hence $d_a\sqcup d_b\in D(L)$. Therefore,$(D(L),\sqcup,\sqcap,d_0,d_1)$ is a lattice. For all $d_a\in D(L)$ and $x\in L$, we have $(d_a\sqcap d_0)(x)=d_a(x)\wedge d_0(x)=0=d_0(x)$ and $(d_a\sqcup d_1(x)=d_a(x)\vee d_1(x)=x=d_1(x)$. Thus, $d_0$ is the smallest element and $d_1$ is the greatest element in $D(L)$.

Next, we prove that $(D(L),\sqcup,\sqcap)$ is a distributive lattice. For all $d_a,d_b,d_c$ and $x\in L$, from $(L,\vee,\wedge)$ is a distributive lattice, one can prove $(d_a\sqcup(d_b\sqcap d_c)=(d_a\sqcup d_b)\sqcap (d_a\sqcup d_c)$ and $(d_a\sqcap(d_b\sqcup d_c)=(d_a\sqcap d_b) \sqcup (d_a\sqcap d_c)$. Therefore, $(D(L),\sqcup,\sqcap,d_0,d_1)$ is a bounded distributive lattice.

Finally, we prove that $(D(L),\sqcup,\sqcap,\star,d_0,d_1)$ is a Boolean algebra. For all $d_a\in D(L)$ and $x\in L$, we have $(d_a)^\star(x)=d_{a^\ast}(x)=a^\ast\odot x=d_{a^\ast}(x)$, that is, $(d_a)^\star(x)=d_{a^\ast}(x)$. One can easy check that if $a\in B(L)$, then $a^\ast\in B(L)$, and hence ${d_a}^\star\in D(L)$. furthermore, we have $(d_a\sqcup (d_a)^\star)(x)=(d_a)(x)\vee d_{a^\ast}(x)=(a\odot x)\vee(a^\ast\odot x)=(a\vee a^\ast)\odot x=x=d_1(x)$ and $(d_a\sqcap (d_a)^\star)(x)=(d_a)(x)\wedge d_{a^\ast}(x)=(a\odot x)\wedge(a^\ast\odot x)=(a\vee a^\ast)\wedge x=0=d_0(x)$, that is, $(d_a\sqcup (d_a)^\star)=d_1$ and $(d_a\sqcap (d_a)^\star)=d_0$. Therefore, $(D(L),\sqcup,\sqcap,\star,d_0,d_1)$ is a Boolean algebra.
\end{proof}

\begin{theorem}\emph{ Let $L$ be an MV-algebra. Then $(G(L),\cap,\cup,\bullet,g_0,g_1)$ is a Boolean algebra, where $(g_a\cup g_b)x=(g_ax)\vee (g_bx)$, $(g_a\cap g_b)x=(g_ax)\wedge (g_bx)$, $(g_a)^\bullet x=g_{a^\ast}x$, $(g_0)^\bullet=g_1$, for any $g_a,g_b\in G(L)$.}
\end{theorem}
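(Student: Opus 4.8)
The plan is to proceed exactly as in the proof of Theorem 4.22, using that for $a\in B(L)$ Proposition 2.4 gives $g_a(x)=a\oplus x=a\vee x$ for every $x\in L$, so that each $g_a$ is completely determined by the Boolean element $g_a(0)=a$, and the assignment $a\mapsto g_a$ essentially just relabels the Boolean centre of $L$.

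First I would check closure of $G(L)$ under the three operations by a direct computation that uses only distributivity of the lattice $(L,\wedge,\vee)$ and the fact that $B(L)$ is a subalgebra of $L$: for $a,b\in B(L)$ and $x\in L$ one gets $(g_a\cap g_b)(x)=(a\vee x)\wedge(b\vee x)=(a\wedge b)\vee x$, $(g_a\cup g_b)(x)=(a\vee x)\vee(b\vee x)=(a\vee b)\vee x$, and $(g_a)^\bullet(x)=a^\ast\vee x$; since $a\wedge b$, $a\vee b$ and $a^\ast$ all lie in $B(L)$, this yields $g_a\cap g_b=g_{a\wedge b}$, $g_a\cup g_b=g_{a\vee b}$ and $(g_a)^\bullet=g_{a^\ast}$, all members of $G(L)$, and in particular $(g_0)^\bullet=g_{0^\ast}=g_1$.

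The cleanest way to finish is then to observe that $\Phi\colon B(L)\to G(L)$, $a\mapsto g_a$, is a bijection (surjective by definition of $G(L)$, injective since $\Phi(a)(0)=a$), and that the identities above say precisely that $\Phi$ carries $\wedge,\vee,{}^\ast,0,1$ to $\cap,\cup,{}^\bullet,g_0,g_1$; since $(B(L),\wedge,\vee,{}^\ast,0,1)$ is a Boolean algebra, so is $(G(L),\cap,\cup,{}^\bullet,g_0,g_1)$. Alternatively, one verifies the Boolean-algebra axioms in place: $(G(L),\cap,\cup,g_0,g_1)$ is a bounded distributive lattice because $(L,\wedge,\vee)$ is distributive and $g_a\cap g_0=g_0$, $g_a\cup g_1=g_1$ for all $a\in B(L)$, and $(g_a)^\bullet$ is the complement of $g_a$ because $(g_a\cup(g_a)^\bullet)(x)=(a\vee a^\ast)\vee x=1=g_1(x)$ and $(g_a\cap(g_a)^\bullet)(x)=(a\wedge a^\ast)\vee x=x=g_0(x)$. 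I do not expect any genuine obstacle here, the argument being parallel to Theorem 4.22; the only point requiring a little care is to keep straight that, in contrast to $D(L)$, the bottom element $g_0$ is the identity map of $L$ and the top element $g_1$ is the constant map $x\mapsto 1$, the induced order being $g_a\le g_b$ iff $a\le b$.
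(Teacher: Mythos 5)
Your proposal is correct and follows essentially the same route as the paper, which proves this theorem by declaring it ``similar to Theorem 4.21'' and hence by exactly the pointwise computation you give: $g_a\cap g_b=g_{a\wedge b}$, $g_a\cup g_b=g_{a\vee b}$, $(g_a)^\bullet=g_{a^\ast}$, together with the complement laws $(g_a\cup (g_a)^\bullet)=g_1$ and $(g_a\cap (g_a)^\bullet)=g_0$. Your closing caution that here the bottom element $g_0$ is the identity map and the top element $g_1$ is the constant map $x\mapsto 1$ is the one genuine point of divergence from the $D(L)$ case, and you have it right.
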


\begin{proof} The proof is similar to that of Theorem 4.21.
\end{proof}
\begin{theorem}\emph{ Let $L$ be an MV-algebra. Then Boolean algebras $(B(L),\wedge,\vee,\ast,0,1)$ and $(D(L),\sqcup,\sqcap,\star,d_0,d_1)$ are isomorphic.}
\end{theorem}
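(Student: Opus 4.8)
The plan is to exhibit the obvious candidate isomorphism and verify it is a bijective Boolean homomorphism. Define $\Phi\colon B(L)\longrightarrow D(L)$ by $\Phi(a)=d_a$ for all $a\in B(L)$. Since $D(L)=\{d_a\mid a\in B(L)\}$ by definition, $\Phi$ is automatically well defined and surjective, so no work is needed for those two points.

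Next I would establish injectivity, which is the only genuinely new computation: if $d_a=d_b$ for $a,b\in B(L)$, then evaluating both sides at $1$ gives $a=a\odot 1=d_a(1)=d_b(1)=b\odot 1=b$, hence $a=b$. Equivalently, one can display the inverse explicitly as the map $d_a\mapsto d_a(1)$; this simultaneously makes injectivity and the two-sided inverse property transparent.

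For the homomorphism property I would simply reuse the identities already proved inside Theorem 4.21, namely $d_a\sqcap d_b=d_{a\wedge b}$, $d_a\sqcup d_b=d_{a\vee b}$, and $(d_a)^{\star}=d_{a^{\ast}}$, together with the bounds $d_0$ and $d_1$. These yield at once $\Phi(a\wedge b)=d_{a\wedge b}=\Phi(a)\sqcap\Phi(b)$, $\Phi(a\vee b)=d_{a\vee b}=\Phi(a)\sqcup\Phi(b)$, $\Phi(a^{\ast})=d_{a^{\ast}}=\Phi(a)^{\star}$, $\Phi(0)=d_0$, and $\Phi(1)=d_1$. Combining this with the bijectivity established above shows $\Phi$ is an isomorphism of Boolean algebras, so $(B(L),\wedge,\vee,\ast,0,1)\cong(D(L),\sqcup,\sqcap,\star,d_0,d_1)$.

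There is essentially no serious obstacle here: all the substantive algebra was carried out in the proof of Theorem 4.21, and the one remaining ingredient is the elementary observation that a Boolean additive derivation $d_a$ is completely recovered from its value at $1$, which gives injectivity of $\Phi$.
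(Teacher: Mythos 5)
Your proposal is correct and follows essentially the same route as the paper: both define $\phi(a)=d_a$ and verify the homomorphism identities by citing the computations $d_a\sqcap d_b=d_{a\wedge b}$, $d_a\sqcup d_b=d_{a\vee b}$, $(d_a)^\star=d_{a^\ast}$ already carried out in Theorem 4.21. The only difference is that you spell out injectivity via $d_a(1)=a$, a detail the paper dismisses as obvious; this is a welcome but minor addition.
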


\begin{proof}For all $a\in B(L)$, let $\phi:B(L)\longrightarrow D(L)$ be defined by $\phi(a)=d_a$ for all $a\in B(L)$. Clearly, $\phi$ is a map from $ B(L)$ to $D(L)$, that is, $\phi$ is well defined. One can easily see that $\phi$ is one to one and onto.

Furthermore, for any $a,b\in B(L)$, we have $\phi(a\wedge b)=d_{a\wedge b}=d_a\sqcap d_b=\phi(a)\sqcap \phi(b)$, $\phi(a\vee b)=d_{a\vee b}=d_a\sqcup d_b=\phi(a)\sqcup \phi(b)$ and $\phi(a^\ast)=d_{a^\ast}=(d_a)^\star=(\phi(a))^\star$.

Combining them, we obtain that $\phi$ is an isomorphism from $B(L)$ to $D(L)$. Therefore, Boolean algebras $(B(L),\wedge,\vee,\ast,0,1)$ and $(D(L),\sqcup,\sqcap,\star,d_0,d_1)$ are isomorphic.
\end{proof}

\begin{theorem}\emph{ Let $L$ be an MV-algebra. Then Boolean algebras $(B(L),\wedge,\vee,\ast,0,1)$ and $(G(L),\cap,\cup,\bullet,g_0,g_1)$ are isomorphic.}
\end{theorem}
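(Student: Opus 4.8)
The plan is to imitate the proof of Theorem 4.24 essentially verbatim, replacing $d_a$ by $g_a$ and $D(L)$ by $G(L)$. Define $\psi\colon B(L)\longrightarrow G(L)$ by $\psi(a)=g_a$ for all $a\in B(L)$. By the very definition of $G(L)$ this map is well defined and onto. To see that it is one to one, observe that $g_a(0)=a\oplus 0=a$, so $g_a=g_b$ forces $a=b$; this same observation simultaneously exhibits the inverse assignment $g_a\mapsto a$.

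Next I would record the three identities describing how the operations of $G(L)$ act on the index, which are exactly the ones established inside the proof of Theorem 4.23 (obtained by imitating Theorem 4.21). For $a,b\in B(L)$ and $x\in L$, using $a\oplus x=a\vee x$ and $b\oplus x=b\vee x$ together with the distributivity of the underlying lattice $(L,\wedge,\vee)$, one gets $(g_a\cap g_b)(x)=(a\vee x)\wedge(b\vee x)=(a\wedge b)\vee x=g_{a\wedge b}(x)$ and $(g_a\cup g_b)(x)=(a\vee x)\vee(b\vee x)=(a\vee b)\vee x=g_{a\vee b}(x)$, hence $g_a\cap g_b=g_{a\wedge b}$ and $g_a\cup g_b=g_{a\vee b}$; finally $(g_a)^{\bullet}=g_{a^{\ast}}$ by definition, and $a^{\ast}\in B(L)$ since $B(L)$ is a subalgebra of $L$.

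With these identities in hand the homomorphism property is immediate: $\psi(a\wedge b)=g_{a\wedge b}=g_a\cap g_b=\psi(a)\cap\psi(b)$, $\psi(a\vee b)=g_{a\vee b}=g_a\cup g_b=\psi(a)\cup\psi(b)$, and $\psi(a^{\ast})=g_{a^{\ast}}=(g_a)^{\bullet}=(\psi(a))^{\bullet}$; moreover $\psi(0)=g_0$ and $\psi(1)=g_1$. Combining injectivity, surjectivity and these equalities, $\psi$ is a Boolean-algebra isomorphism, so $(B(L),\wedge,\vee,\ast,0,1)$ and $(G(L),\cap,\cup,\bullet,g_0,g_1)$ are isomorphic.

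I expect no genuine obstacle here: the substantive content is carried by Theorem 4.23 (that $G(L)$ is a Boolean algebra with those index identities) and by the trivial observation $g_a(0)=a$. The only point requiring a little care is to make sure the operations on $G(L)$ translate into the lattice operations of $B(L)$ and not their order-duals; this is settled by noting that $g_a\le g_b$ in the pointwise order iff $a\le b$ (evaluate at $0$), so $\psi$ is in fact an order isomorphism as well.
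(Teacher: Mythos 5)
Your proposal is correct and follows exactly the route the paper intends: the paper's proof of this theorem is simply ``similar to that of Theorem 4.23,'' i.e.\ the map $a\mapsto g_a$, and you have carried out that imitation faithfully, including the index identities $g_a\cap g_b=g_{a\wedge b}$, $g_a\cup g_b=g_{a\vee b}$, $(g_a)^{\bullet}=g_{a^{\ast}}$ and the injectivity check via $g_a(0)=a$. In fact you supply more detail than the paper does, and your remark that $\cap$ matches $\wedge$ rather than its order-dual is a worthwhile clarification.
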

\begin{proof} The proof is similar to that of Theorem 4.23.
\end{proof}

The next theorem is now an immediate consequence.

\begin{theorem}\emph{Let $L$ be a Boolean algebra. Then}
\begin{enumerate}[(1)]
  \item \emph{$(L,\wedge,\vee,\ast,0,1)$ is  isomorphic to $(D(L),\sqcup,\sqcap,\star,d_0,d_1)$,
  \item $(L,\wedge,\vee,\ast,0,1)$ is  isomorphic to $(G(L),\cap,\cup,\bullet,g_0,g_1)$.}
\end{enumerate}
\end{theorem}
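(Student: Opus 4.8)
The plan is to obtain this statement as an immediate corollary of Theorems 4.23 and 4.24 together with the defining property of Boolean algebras. First I would recall, as noted in Section 2, that an MV-algebra $L$ is a Boolean algebra precisely when it satisfies the extra identity $x\oplus x=x$ (equivalently $x\odot x=x$) for all $x\in L$; in other words, every element of $L$ is idempotent. Hence the Boolean center $B(L)=\{a\in L\mid a\oplus a=a\}$ coincides with the whole universe $L$.

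Next I would simply specialize the two isomorphism theorems to this case. Since $B(L)=L$, Theorem 4.23 yields that $(B(L),\wedge,\vee,\ast,0,1)=(L,\wedge,\vee,\ast,0,1)$ is isomorphic to $(D(L),\sqcup,\sqcap,\star,d_0,d_1)$, which is part (1). In the same way, Theorem 4.24 yields that $(B(L),\wedge,\vee,\ast,0,1)=(L,\wedge,\vee,\ast,0,1)$ is isomorphic to $(G(L),\cap,\cup,\bullet,g_0,g_1)$, which is part (2). Concretely, the isomorphisms are $\phi\colon L\to D(L)$, $\phi(a)=d_a$, and $\psi\colon L\to G(L)$, $\psi(a)=g_a$; one may note in passing that in the Boolean setting $d_a(x)=x\odot a=x\wedge a$ and $g_a(x)=a\oplus x=a\vee x$, so these maps are literally ``meet with $a$'' and ``join with $a$''.

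There is essentially no obstacle: the only point to verify is the identification $B(L)=L$, which is immediate from the characterization of Boolean algebras as idempotent MV-algebras, and once that is in hand both claims are verbatim instances of the already-established Theorems 4.23 and 4.24. Should a fully self-contained argument be preferred, one could instead re-run the short proof of Theorem 4.23 (checking that $\phi(a)=d_a$ is a bijection preserving $\wedge$, $\vee$ and $\ast$, and dually for $\psi$), but this duplication is unnecessary given the results already proved.
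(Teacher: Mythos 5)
Your proposal is correct and matches the paper's own argument, which likewise derives both parts immediately from Theorems 4.23 and 4.24 via the identification $B(L)=L$ for a Boolean algebra. The only difference is that you make the step $B(L)=L$ explicit, which the paper leaves tacit.
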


\begin{proof} It follows from Theorem 4.23, 4.24.
\end{proof}

\section{Conclusions}

The notion of derivations is helpful for studying structures and properties in algebraic systems. In the paper, some useful properties of particular derivations are discussed. Also, we obtain that the fixed point set of additive derivations is still an MV-algebra. Besides, we get that the fixed point set of Boolean additive derivations and that of their adjoint derivations are isomorphism. Finally, we obtain that the set of all Boolean additive  derivations is  isomorphic to a Boolean algebra. There is still an open problem: for any ideal $I$ of a general MV-algebra $L$, whether there exists an additive derivation $d$ such that Fix$_d(L)=I$. In our future work, we will consider these problems.

\medskip
\noindent\textbf{Acknowledgments}
\medskip

\indent This research is partially supported by a grant of
National Natural Science Foundation of China (11601302).


\end{document}